\documentclass[10pt,leqno]{amsart}
\topmargin= .5cm
\textheight= 22.5cm
\textwidth= 32cc
\baselineskip=16pt
\usepackage{indentfirst, amssymb,amsmath,amsthm}
\newtheoremstyle{case}{}{}{}{}{}{:}{ }{}
\newtheoremstyle{subcase}{}{}{}{}{}{:}{ }{}

\evensidemargin= .9cm
\oddsidemargin= .9cm
\newtheorem*{theoA}{Theorem A}
\newtheorem*{theoB}{Theorem B}
\newtheorem*{theoC}{Theorem C}
\newtheorem*{theoD}{Theorem D}
\newtheorem*{theoE}{Theorem E}
\newtheorem*{theoF}{Theorem F}
\newtheorem*{theoG}{Theorem G}
\newtheorem*{theoH}{Theorem H}
\newtheorem*{theoI}{Theorem I}

\pagestyle{myheadings}
%\markboth{\sc{A.Banerjee}}{\sc{Uniqueness of meromorphic functions that share two sets }}
\newtheorem{theo}{Theorem}[section]
\newtheorem{lem}{Lemma}[section]
\newtheorem{cor}{Corollary}[section]
\newtheorem{note}{Note}[section]
\newtheorem{exm}{Example}[section]
\newtheorem{defi}{Definition}[section]
\newtheorem{rem}{Remark}[section]

\newcommand{\ol}{\overline}

\numberwithin{subcase}{case}
\newcommand{\be}{\begin{equation}}
\newcommand{\ee}{\end{equation}}
\newcommand{\beas}{\begin{eqnarray*}}
	\newcommand{\bea}{\begin{eqnarray}}
	\newcommand{\eea}{\end{eqnarray}}
	\newcommand{\eeas}{\end{eqnarray*}}
\newcommand{\lra}{\longrightarrow}
\newcommand{\bd}{\begin{doublespacing}}
	\newcommand{\ed}{\end{doublespacing}}
\usepackage{amssymb,setspace,amsbsy,indentfirst}
\numberwithin{equation}{section}
\renewcommand{\vline}{\mid}
\begin{document}
	\title[Uniqueness of $\boldmath{L}$ function with special class of meromorphic function .....]{ Uniqueness of $\boldmath{L}$ function with special class of meromorphic function under restricted sharing of sets}
	\date{}
	\author{Abhijit Banerjee\;\;\; and \;\;\;Arpita Kundu}
	\date{}
	\address{ Department of Mathematics, University of Kalyani, West Bengal 741235, India.}
	\email{abanerjee\_kal@yahoo.co.in, abanerjeekal@gmail.com}
	
	\address{Department of Mathematics, University of Kalyani, West Bengal 741235, India.}
	\email{arpitakundu.math.ku@gmail.com}
	\maketitle
	\let\thefootnote\relax
	\footnotetext{2010 Mathematics Subject Classification:  Primary 11M36; Secondary 30D35.}
	\footnotetext{Key words and phrases: Meromorphic function, $L$ function, uniqueness, shared sets.}
	\begin{abstract}
		The purpose of the paper is to rectify a series of errors occurred in  \cite{Ban-Kundu_lmj}, \cite{Sahoo-Sarkar_al_i.cuza}, \cite{Yan-Li-Yi_Lith} for a particular situation. To get a fruitful solution and to overcome the issue, we introduce a new form of set sharing namely restricted set sharing, which is stronger than the usual one. We manipulate the newly introduced notion in this specific section of literature to resolve all the complications. Not only that we have subtly used the same sharing form to a well known unique range set \cite{Frank-Reinders_comp.var} to settle a long time unsolved problem.  
	\end{abstract}
	\section{introduction}	In the paper we adopt the standard notations of Nevanilinna theory of meromorphic functions as explained in \cite{W.K.Hayman_64}.
	We write   $\mathbb{\overline{C}}=\mathbb{C}\cup\{\infty\}$, $\mathbb{C^*}=\mathbb{C}\setminus\{0\}$ and $ \mathbb{\ol N}=\mathbb{N}\cup \{0 \}$, where $\mathbb{C}$ and $\mathbb{N}$  denote the set of all complex numbers and natural numbers respectively.  We begin by recalling the following definition from the literature. 
	
	\begin{defi}\cite{Ban-Mallick_cmft}
		For a  non-constant meromorphic function $f$ and  $a\in{\mathbb{C}}$, let  $E_{f}(a)=\{(z,p)\in\mathbb{C}\times\mathbb{N}: f(z)=a\; with\; multiplicity\; p\}$ $ \left(\ol  E_{f}(a)=\{(z,1)\in\mathbb{C}\times\mathbb{N}: f(z)=a\}\right)$. Then we say  $f$, $g$ share the value $a$ CM(IM) if $E_{f}(a)=E_{g}(a)$$\left( \ol E_{f}(a)=\ol E_{g}(a)\right).$ For $a=\infty$, we define $E_f(\infty) := E_{1/f}(0)$ $\left( \ol E_f(\infty) :=\ol  E_{1/f}(0)\right)$.
	\end{defi}
	\begin{defi} \cite{Ban-Mallick_cmft}
		For a  non-constant meromorphic function $f$ and  $S\subset\overline{\mathbb{C}}$, let $E_{f}(S)=\bigcup_{a\in S}\{(z,p)\in\mathbb{C}\times\mathbb{N}: f(z)=\text {a\; with\; multiplicity\; p}\}$ $\left(\ol  E_{f}(S)=\bigcup_{a\in S}\{(z,1)\in\mathbb{C}\times\mathbb{N}: f(z)=a\}\right) $. Then we say  $f$, $g$ share the set $S$ CM(IM) if $E_{f}(S)=E_{g}(S)$ $\left(\ol  E_{f}(S)=\ol E_{g}(S)\right)$.
	\end{defi} 
	\begin{defi}\cite{Lahiri_Nagoya(01)}
		Let k be a nonnegative integer or infinity. For $a\in\overline{\mathbb{C}}$  we
		denote by $E_k(a; f)$ the set of all a-points of f, where an $a$-point of multiplicity $m$ is counted
		$m$ times if $m\leq k$ and $k + 1$ times if $m > k$. If $E_k(a; f) = E_k(a; g)$, we say that $f$, $g$ share
		the value a with weight $k$.
	\end{defi}
	We write $f$, $g$ share $(a,k)$ to mean that $f$, $g$ share the value $a$ with weight $k$. Clearly if
	$f$, $g$ share $(a,k)$, then $f$, $g$ share $(a,p)$ for any integer $p$, $0\leq p < k$. Also we note that $f$, $g$
	share a value $a$ IM or CM if and only if $f$, $g$ share $(a,0)$ or $(a,\infty)$ respectively.
	\begin{defi}\cite{Lahiri_Nagoya(01)}
		For $S\subset\mathbb{\ol C}$ we define $E_f(S,k)=\cup_{a\in S}E_k(a;f)$, where k is a non-negative integer $a\in S$ or infinity. Clearly $E_f(S)=E_f(S,\infty)$ and $\overline{E}_f(S)=E_f(S,0)$, i.e $f$ and $g$ share $S$ CM (IM) implies $f$ and $g$ share $(S,\infty)\;((S,0))$.
	\end{defi}
	
	When $S$ contains only one element the definition coincides with the classical definition of value sharing.
	\par We assume that the readers are familiar with the standard notations of Nevanlinna theory such as the Nevanlinna characteristic function $T(r,f)$, the proximity function $ m(r,f)$, the counting function (reduced counting function) $N(r,\infty;f)$ ($\ol N(r,\infty;f)$) and so on, which are well explained in \cite{W.K.Hayman_64}. We use the symbol $\rho(f)$ to denote the order of a non constant meromorphic function $f$, which is defined as $$\rho(f)=\limsup_{r\lra\infty}\frac{\log^{+}T(r,f)}{\log r}.$$
	
	\par By $S(r,f)$ we mean any quantity satisfying $S(r,f)=O(\log(rT(r,f)))$ for all r possibly outside a set of finite Lebesgue measure. If $f$ is a function of finite order, then $S(r,f)=O(\log r)$ for all $r$. In this paper we consider $f$ is a non constant meromorphic function having finitely many poles in $\mathbb{C}$. Clearly $\ol N(r,\infty;f)=O(\log r)$.

	\par We know that the Riemann hypothesis is a conjecture which states that the Riemann zeta function $\zeta(s)=\sum_{n=1}^{\infty}\frac{1}{n^s}$ has all its zeros only at the negative even integers and complex numbers with real part $\frac{1}{2}$. 	The Riemann zeta function is  defined in the half-plane $Re(s) > 1$ by the absolutely convergent series $\zeta(s)=\sum_{n=1}^{\infty}\frac{1}{n^{s}}$
	and in the whole complex plane $\mathbb{C}$ by analytic continuation. %This hypothesis is about the distribution of prime numbers and so it attracts the number theorist. 
	Dirichlet’s $L$-functions are natural extensions of the Riemann zeta function and so the Riemann hypothesis can be generalized by replacing the Riemann zeta function by $L$-functions. %$L$-functions, with the Riemann zeta function $\zeta(s)=\sum_{n=1}^{\infty}\frac{1}{n^s}$ as a prototype, are important objects in number theory. 
	\par The value distribution of an $L$-function $\mathcal{L}$ is defined as that of meromorphic function. Analogous to the definition of meromorphic function, for some $c\in \mathbb{C}\cup\{\infty\}$, it is about the roots of the equation $\mathcal{L}(s)=c$. Recently, a new trend among the researchers have been  to investigate the value distributions of $L$-functions (see  \cite{Graun-Grahl-Steuding}, \cite{Hu_LI_Can-16}, \cite{B.Q.LI-Proc.Am-10}, \cite{Li-Yi_Nachr}, \cite{Steuding-Sprin-07}). 
	The selberg class $\mathcal{S}$  of $L$-function is the set of Dirichlet series $\sum_{n=1}^{\infty}\frac{a(n)}{n^s}$ of complex variable $s$ satisfy four axioms given in \cite{selberg-92}.
	\par In 1920, Nevanlinna's five value theorem and four value theorem inspired many mathematicians to further investigate the situation under different perspective. 
	
	In 2007, Steuding [p.152, \cite{Steuding-Sprin-07} ] proved uniqueness problem of two $\mathcal{L}$ functions under reduced shared values as follows:
	\begin{theoA}
		If two $L$-functions $\mathcal{L}_{1}$ and $\mathcal{L}_{2}$ with $a(1)=1$ share a complex value $c\;(\not=\infty)$ CM, then $\mathcal{L}_{1}=\mathcal{L}_{2}$.\end{theoA}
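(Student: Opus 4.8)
The plan is to reduce the uniqueness assertion to the behaviour of a single auxiliary quotient and to exploit two structural features of $L$-functions in the Selberg class: each $\mathcal{L}$ is a meromorphic function of finite order (indeed of order one, via its functional equation) whose only possible pole lies at $s=1$, and the normalization $a(1)=1$ forces $\mathcal{L}(s)\to 1$ as $\mathrm{Re}(s)\to+\infty$, since the Dirichlet series $\sum_{n\ge 1}a(n)n^{-s}$ converges absolutely for $\mathrm{Re}(s)>1$ and every term with $n\ge 2$ tends to $0$. Accordingly I would introduce
$$H(s)=\frac{\mathcal{L}_1(s)-c}{\mathcal{L}_2(s)-c}$$
and aim to prove $H\equiv 1$, which is equivalent to $\mathcal{L}_1\equiv\mathcal{L}_2$.

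First I would analyse the zeros and poles of $H$. Since $\mathcal{L}_1$ and $\mathcal{L}_2$ share $c$ CM, the $c$-points of the two functions coincide together with their multiplicities, so every zero of the numerator is cancelled by an equal zero of the denominator and conversely. As the only poles of $\mathcal{L}_1-c$ and $\mathcal{L}_2-c$ occur, if at all, at $s=1$, the quotient $H$ is holomorphic and zero-free throughout $\mathbb{C}\setminus\{1\}$, with at worst a zero or pole of finite order at $s=1$. Being a ratio of finite-order functions, $H$ is itself of finite order, so the Hadamard factorization theorem applies: $H(s)=(s-1)^{m}e^{g(s)}$ for some integer $m$ and some polynomial $g$, and the order-one bound forces $\deg g\le 1$, say $g(s)=\alpha s+\beta$.

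Next I would pin down the constants by a limit computation. Letting $s\to+\infty$ along the real axis and using $\mathcal{L}_i(s)\to 1$, for $c\neq 1$ one gets $H(s)\to (1-c)/(1-c)=1$. Matching this against $(s-1)^{m}e^{\alpha s+\beta}$ forces $\alpha=0$ (otherwise $|H|$ would tend to $0$ or $\infty$), then $m=0$ (for the same reason), and finally $e^{\beta}=1$; hence $H\equiv 1$ and $\mathcal{L}_1\equiv\mathcal{L}_2$.

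The remaining — and most delicate — point is the exceptional value $c=1$, where both $\mathcal{L}_i-1$ tend to $0$ and the limit above becomes indeterminate. Here I would replace the crude limit by the sharper asymptotics $\mathcal{L}_i(s)-1\sim a_i(n_i)\,n_i^{-s}$, where $n_i\ge 2$ is the least index with nonvanishing coefficient, so that $H(s)\sim \tfrac{a_1(n_1)}{a_2(n_2)}\,(n_2/n_1)^{s}$; comparing this with $(s-1)^m e^{\alpha s+\beta}$ identifies $\alpha=\log(n_2/n_1)$ and $m=0$, turning the factorization into the Dirichlet-series identity $\mathcal{L}_1-1=\tfrac{a_1(n_1)}{a_2(n_2)}(n_2/n_1)^s(\mathcal{L}_2-1)$, and matching the frequencies $\log n$ on both sides forces $n_1=n_2$ and the constant to be $1$, again giving $H\equiv 1$. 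I expect the technical heart of the argument to be precisely this interplay between the Hadamard factorization and the growth of $\mathcal{L}$ along the real axis: justifying the finite-order (order-one) property and ruling out a nonconstant exponential factor $e^{\alpha s}$ is where the real work lies, the set-theoretic CM condition merely supplying the zero-free quotient.
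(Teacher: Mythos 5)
Your argument for $c\neq 1$ is correct, and it is essentially the standard route to this result; note that the paper itself contains no proof of Theorem A at all --- it is quoted from Steuding's book --- and, crucially, the Remark immediately following it records that Hu and Li \cite{Hu_LI_Can-16} produced a counterexample showing that \emph{the statement is false when $c=1$}. Steuding's theorem is only valid for $c\neq 1$, precisely because the limiting argument as $\mathrm{Re}(s)\to+\infty$ degenerates at $c=1$. So any attempt that claims to prove the $c=1$ case must contain an error, and yours does.

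The error is the final step of your $c=1$ analysis: ``matching the frequencies $\log n$ on both sides forces $n_1=n_2$ and the constant to be $1$.'' It forces neither. The identity $\mathcal{L}_1-1=\tfrac{a_1(n_1)}{a_2(n_2)}\,(n_2/n_1)^{s}(\mathcal{L}_2-1)$ is an identity of generalized Dirichlet series in which the factor $(n_2/n_1)^{s}$ rescales every frequency $n$ occurring in $\mathcal{L}_2-1$ to $n n_1/n_2$, and all of these rescaled frequencies can be integers without $n_1$ being equal to $n_2$: numerically, $2^{-s}=(4/2)^{s}\cdot 4^{-s}$, so $n_1=2$, $n_2=4$ and $H(s)=2^{s}$ are perfectly consistent with everything you derived up to that point. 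In other words, after the Hadamard factorization you cannot exclude a genuinely nonconstant exponential factor $e^{\alpha s}$ when $c=1$; this is not a repairable defect of the write-up but the precise mechanism behind the Hu--Li counterexample: one can exhibit two distinct Dirichlet series in the relevant class, supported on different powers (schematically $1+\alpha_1 2^{-s}$ and $1+\alpha_2 4^{-s}$), which never assume the value $1$ and hence share $c=1$ CM vacuously, while their quotient is exactly such a nonconstant exponential. The honest conclusion of your method is Theorem A restricted to $c\neq 1$ --- which your first two paragraphs do establish --- together with the observation that the $c=1$ case cannot be proved because it is false.
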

	\begin{rem}
		Providing a counterexample, Hu and Li \cite{Hu_LI_Can-16} have pointed out that {\it{Theorem A}} is not true when c = 1. \end{rem} 
	
	Since $L$-functions possess meromorphic continuations, it is quite natural to investigate  to which extent an $L$-function can share values with an arbitrary meromorphic function. In 2010, providing a counter example, Li \cite{B.Q.LI-Proc.Am-10} observed that {\it{Theorem A}} is not true for an $L$-function and a meromorphic function.
	\begin{exm}
		For an entire function g, the functions $\zeta$ and $\zeta e^{g}$ share $(0,\infty)$, but $\zeta \not=\zeta e^{g}$ where $g$ is supposed not to vanish identically.
	\end{exm}
	However, considering two distinct complex values Li \cite{B.Q.LI-Proc.Am-10} was able to find the uniqueness result corresponding to a $L$-function $\mathcal{L}$ and a non-constant meromorphic function $f$.
	
	\par Now let us define the two polynomials $P(w)$, $P_{1}(w)$ as follows: $$P(w)=w^{n}+aw^{m}+b\;\; \text {and}\;\;P_{1}(w)=w^{n}+aw^{n-m}+b,$$ where $a$, $b\in\mathbb{C}\backslash\{0\}$ and $n$, $m$ be two positive integers such that gcd$(n,m)=1$. \par In view of {\it Lemma 4}, \cite{Ban-Kundu_lmj}, we see that both $P(w)$ and $P_{1}(w)$ can have at most one multiple zero. Next corresponding to the zeros of the polynomials $P(w)$, $P_{1}(w)$, let us define two sets $S$ and $S_{1}$ as follows: \bea\label{e1.1} S=\{w:P(w)=0\}=\{\alpha_{1},\alpha_{2},\ldots,\alpha_{l}\}\eea \bea\label{e1.2} S_{1}=\{w:P_{1}(w)=0\}=\{\beta_{1},\beta_{2},\ldots,\beta_{l}\},\eea where $n-1\leq l\leq n$. \par 
	We first recall the following definition:
	\begin{defi}\cite{Yi_comp.var(97)}
		Let $S\subset \mathbb{C}$ such that if two non-constant meromorphic functions $f$ and $g$ share the $S$ CM implies $f\equiv g$, then we say $S$ is a URSM.
	\end{defi}
	
	%\begin{defi}
	%	Let $S=\{w:P(w)=0\}\subset \mathbb{C}$, then we call $P(w)$  is a generating polynomial of $S$. 
	%\end{defi}
	
	\par The polynomial corresponding to URSM having $n$ elements is called a PURSM (see p. 447, \cite{P.Li-C.C.Yang_kodai(95)}) and it always contain simple zeros. Probably Yuan-Li-Yi \cite{Yan-Li-Yi_Lith} were the first authors who dealt with such a set $S=\{w:P(w)=0\}$, where $P(w)$ can have a multiple zero and then investigated the uniqueness problem for both the cases $l=n$ and $l=n-1$. Corresponding to the sharing of one or two finite sets for a meromorphic function $f$ and an $L$-function $\mathcal{L}$,  Yuan-Li-Yi's \cite{Yan-Li-Yi_Lith} results demonstrated below actually originated from the question of Gross \cite{G.F_Springer(1977)}.	
	\begin{theoB}\cite{Yan-Li-Yi_Lith}
		Let $f$ be a meromorphic function having finitely many poles in $\mathbb{C}$ and let ${\mathcal{L} } $ be a non-constant $L$-function. Let $S$ be defined as in (\ref{e1.1}), where $n\geq 5$ and $n>m$ and $c\in\mathbb{C}\setminus S\cup\{0\}$. If $f$ and $\mathcal{L}$ share $(S,\infty)$ and $(c,0)$, then $f\equiv \mathcal{L}$.
	\end{theoB}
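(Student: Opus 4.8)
Since $f$ and $\mathcal L$ share $(S,\infty)$, the functions $F:=P(f)$ and $G:=P(\mathcal L)$ share the value $0$ CM. As $f$ has finitely many poles and $\mathcal L$, being an $L$-function, has at most a single pole (at $s=1$), both $\ol N(r,\infty;f)$ and $\ol N(r,\infty;\mathcal L)$ are $O(\log r)$, i.e. $S(r,f)$ and $S(r,\mathcal L)$. Hence $H:=F/G=P(f)/P(\mathcal L)$ has its zeros and poles confined to the finite pole-sets of $f$ and $\mathcal L$, so $H=R\,e^{\alpha}$ with $R$ rational and $\alpha$ entire. Applying the Second Main Theorem to $f$ with the $l$ points of $S$ together with $c$ and $\infty$, and using $\sum_{i}\ol N(r,\alpha_i;f)=\ol N(r,0;P(f))=\ol N(r,0;P(\mathcal L))$ (shared set), $\ol N(r,c;f)=\ol N(r,c;\mathcal L)$ (shared value) and $\ol N(r,\infty;f)=S(r,f)$, one obtains $l\,T(r,f)\le(n+1)T(r,\mathcal L)+S(r,f)$; since $l\ge n-1$ this gives $T(r,f)=O(T(r,\mathcal L))$, and symmetrically the reverse. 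As an $L$-function has order one, $f$ is then of finite order and $\alpha$ is a polynomial.

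\textbf{Eliminating the multiplier.} The decisive step is to prove $H\equiv1$, i.e. $P(f)\equiv P(\mathcal L)$. At every common $c$-point $s_0$ we have $f(s_0)=\mathcal L(s_0)=c$, so $P(f)(s_0)=P(\mathcal L)(s_0)=P(c)$; since $c\notin S$ forces $P(c)\ne0$, this yields $H(s_0)=1$. Thus, if $H\not\equiv1$, every $c$-point of $f$ is a zero of $R\,e^{\alpha}-1$, whence $\ol N(r,c;f)\le T(r,e^{\alpha})+O(\log r)$. I would combine this bound with the growth relation $T(r,f)\asymp T(r,\mathcal L)$ and the identity $e^{\alpha}=P(f)/\!\left(R\,P(\mathcal L)\right)$ to show that $T(r,e^{\alpha})=S(r,f)$, forcing the polynomial $\alpha$ to be constant; then $H$ is rational, and the shared value $c$ gives $H\equiv1$. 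The case $l=n-1$, in which $P$ carries a double zero, must be treated separately, since there the multiple $\alpha_i$-point contributes differently to the counting and the Second Main Theorem is less forceful.

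\textbf{From $P(f)\equiv P(\mathcal L)$ to $f\equiv\mathcal L$.} Writing $P(f)-P(\mathcal L)=(f-\mathcal L)\,R(f,\mathcal L)$ with $R(x,y)=\sum_{i=0}^{n-1}x^{i}y^{n-1-i}+a\sum_{j=0}^{m-1}x^{j}y^{m-1-j}$, suppose $f\not\equiv\mathcal L$; then $R(f,\mathcal L)\equiv0$. Dividing $f^{n}-\mathcal L^{n}=-a(f^{m}-\mathcal L^{m})$ by $\mathcal L^{n}$ and setting $u:=f/\mathcal L$ gives $\mathcal L^{\,n-m}=-a\,\dfrac{u^{m}-1}{u^{n}-1}$. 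Because $\gcd(n,m)=1$, the only common root of $u^{n}=1$ and $u^{m}=1$ is $u=1$, so the right-hand side has genuine poles at each of the $n-1$ nontrivial $n$-th roots of unity, while its left-hand side has poles only at the finitely many poles of $\mathcal L$; hence $u$ attains each of these $n-1$ values (with $n-1\ge4$) only on an $O(\log r)$ set. The Second Main Theorem applied to $u$ at these values then yields $(n-3)\,T(r,u)\le S(r,u)$, so $u$ must be constant; returning to the relation with $u$ constant and $\mathcal L$ non-constant forces $u^{n}=u^{m}=1$, hence $u\equiv1$ and $f\equiv\mathcal L$.

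\textbf{Main obstacle.} I expect the elimination of the multiplier in the second step --- proving $P(f)\equiv P(\mathcal L)$, equivalently that the exponential factor $e^{\alpha}$ is trivial --- to be the crux of the argument; it is precisely here, and in the delicate double-zero case $l=n-1$, that the usual set-sharing estimates are most easily mishandled, which is presumably the source of the errors the paper sets out to rectify.
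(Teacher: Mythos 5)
Your opening step --- ``since $f$ and $\mathcal L$ share $(S,\infty)$, the functions $F:=P(f)$ and $G:=P(\mathcal L)$ share the value $0$ CM, hence $H=P(f)/P(\mathcal L)$ has finitely many zeros and poles and equals $R\,e^{\alpha}$ with $R$ rational'' --- is exactly the step that fails when $P$ has a multiple zero, and exposing this failure is the entire purpose of the present paper. Sharing $(S,\infty)$ only fixes the multiplicity with which a point is a preimage of the \emph{set} $S$; it does not match up the individual values $\alpha_i$. If $l=n-1$, so that $P(w)=(w-\alpha_1)^2(w-\alpha_2)\cdots(w-\alpha_{n-1})$, a point $z_0$ can be a zero of $f-\alpha_1$ of multiplicity $p$ and simultaneously a zero of $\mathcal L-\alpha_i$ ($i\neq 1$) of the same multiplicity $p$; this is fully compatible with $(S,\infty)$-sharing, yet $P(f)$ vanishes to order $2p$ at $z_0$ while $P(\mathcal L)$ vanishes only to order $p$. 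Concretely,
\[
H=\frac{P(f)}{P(\mathcal L)}=\frac{f-\alpha_1}{\mathcal L-\alpha_1}\cdot\frac{(f-\alpha_1)(f-\alpha_2)\cdots(f-\alpha_{n-1})}{(\mathcal L-\alpha_1)(\mathcal L-\alpha_2)\cdots(\mathcal L-\alpha_{n-1})},
\]
and while the second factor has only finitely many zeros and poles (this is what genuine set-sharing plus the finitely many poles of $f$ and $\mathcal L$ give you), the first factor $(f-\alpha_1)/(\mathcal L-\alpha_1)$ can have infinitely many zeros and poles. So the representation $H=R\,e^{\alpha}$ need not exist, and everything you build on it (the growth comparison, the elimination of the multiplier, the reduction to $P(f)\equiv P(\mathcal L)$) collapses for $l=n-1$. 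Your closing hedge that the double-zero case ``must be treated separately'' locates the delicacy in the wrong place: the problem is not that the Second Main Theorem is less forceful there, but that the auxiliary function you start from is not of the claimed form --- when $l=n-1$, the hypothesis only guarantees that $P(f)$ and $P(\mathcal L)$ share $(0,0)$, never $(0,\infty)$.

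You should also be aware that the paper contains no proof of Theorem B: the statement is quoted from Yuan--Li--Yi, and the paper's thesis is that the published proof --- which runs exactly along your lines, via $h=R_1e^{\alpha}=\frac{f^n+af^m+b}{\mathcal L^n+a\mathcal L^m+b}$ and the estimates $N(r,h)=N(r,0;h)=O(\log r)$, $T(r,h)=O(r)$ --- is valid only when $l=n$, i.e.\ when all zeros of $P$ are simple, so that Theorem B stands unestablished (not merely unproved by you) in the case $l=n-1$. The paper's remedy is not a repaired proof but a strengthened hypothesis: under restricted sharing $(S,\infty)^*=\{(\alpha_1,\infty),(\{\alpha_2,\ldots,\alpha_{n-1}\},\infty)\}$, where the double zero is shared CM separately from the simple zeros, one really does obtain that $P(f)$ and $P(\mathcal L)$ share $(0,\infty)$, and the argument then goes through (Theorems \ref{t1.2} and \ref{t1.3}). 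For $l=n$ your outline is essentially the original argument, and your final step --- from $P(f)\equiv P(\mathcal L)$ to $f\equiv\mathcal L$ via $u=f/\mathcal L$ and the poles of $-a(u^m-1)/(u^n-1)$ --- is the same device the paper uses in proving Theorem \ref{t1.3} (though note that the Second Main Theorem there yields $T(r,u)=O(\log r)$, i.e.\ $u$ rational, and you need one further line to exclude non-constant rational $u$). But as a proof of Theorem B as stated, which permits $l=n-1$, your proposal inherits precisely the gap this paper was written to rectify.
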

	\begin{theoC}\cite{Yan-Li-Yi_Lith}
		Let $f$ and $\mathcal{L}$ be a defined as in {\text Theorem C}.
		Let $S$ be defined as in (\ref{e1.1}) where $n>2m+4$. If $f$ and $\mathcal{L}$ share  $(S,\infty)$, then $f \equiv \mathcal{L}$.
	\end{theoC}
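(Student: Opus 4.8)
The plan is to convert the single set-sharing hypothesis into a functional identity and then run a Second-Main-Theorem argument whose numerics are governed by the assumption $n>2m+4$. First I would normalise the polynomial: set $\mathcal{F}=-\frac{1}{b}\left(f^{n}+af^{m}\right)$ and $\mathcal{G}=-\frac{1}{b}\left(\mathcal{L}^{n}+a\mathcal{L}^{m}\right)$, so that $P(f)=b(1-\mathcal{F})$ and $P(\mathcal{L})=b(1-\mathcal{G})$; hence the $S$-points of $f$ and $\mathcal{L}$ are exactly the $1$-points of $\mathcal{F}$ and $\mathcal{G}$. Sharing $(S,\infty)$ then says that $\mathcal{F}$ and $\mathcal{G}$ share $1$ with full multiplicity. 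Here I would explicitly isolate the delicate point that, when $P$ carries its (unique, by \textit{Lemma 4} of \cite{Ban-Kundu_lmj}) multiple zero, a simple $S$-point of one function may be matched against a point lying over that multiple zero of the other, so that the multiplicities of $\mathcal{F}-1$ and $\mathcal{G}-1$ need not agree. This mismatch is precisely the gap the paper later repairs with restricted sharing; for the statement at hand I proceed on the matched-multiplicity reading. Because $f$ has finitely many poles and $\mathcal{L}$ has at most a pole at $s=1$, the quotient $\frac{\mathcal{F}-1}{\mathcal{G}-1}$ has only finitely many zeros and poles, whence $\frac{\mathcal{F}-1}{\mathcal{G}-1}=R\,e^{\gamma}$ with $R$ rational and $\gamma$ entire.

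Next I would pin down the growth. Since $\mathcal{G}$ inherits the order-one growth of $\mathcal{L}$, a standard comparison controlling $T(r,\mathcal{F})=nT(r,f)+S(r,f)$ against $T(r,\mathcal{G})$ through the shared $1$-points and the bounded pole contribution $\ol N(r,\infty;f)=O(\log r)$ shows that $f$ has finite order; consequently $\gamma$ is a polynomial and $R$ a rational function. The aim of the core estimate is then to force the relation $\frac{\mathcal{F}-1}{\mathcal{G}-1}=R\,e^{\gamma}$ down to one of the two rigid shapes
\[
\mathcal{F}\equiv\mathcal{G}\qquad\text{or}\qquad(\mathcal{F}-1)(\mathcal{G}-1)\equiv c .
\]

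To obtain this dichotomy I would apply the Second Main Theorem to $f$ using the zeros of $P(w)$ together with those of $P'(w)=w^{m-1}\!\left(nw^{n-m}+am\right)$. Because $P$ has at most one multiple zero, $P(w)$ supplies nearly $n$ distinct target values whose preimages under $f$ are heavily shared with those of $\mathcal{L}$, while the ramification carried by the factors of $P'$ contributes an error term whose size is controlled by $m$. Summing the resulting deficiency relation, the hypothesis $n>2m+4$ is exactly what makes the total deficiency exceed the bound permitted by the Second Main Theorem unless the exponential factor degenerates, i.e. unless $\gamma$ is constant and $R$ collapses so that the displayed dichotomy holds. I expect this counting, keeping the multiple zero and the $m$-fold factor of $P'$ honestly accounted for, to be the main obstacle, since the margin in $n>2m+4$ is thin and every reduced counting function must be estimated sharply.

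Finally I would dispose of the two cases. In the reciprocal case $(\mathcal{F}-1)(\mathcal{G}-1)\equiv c$, equivalently $P(f)P(\mathcal{L})\equiv cb^{2}$, I would invoke the features that distinguish an $L$-function from a general meromorphic function, namely the normalisation $a(1)=1$, the single possible pole at $s=1$, and the precise growth $T(r,\mathcal{L})=\frac{d_{\mathcal{L}}}{\pi}\,r\log r+O(r)$, to contradict the zero--pole balance forced by the identity, thereby excluding this branch. In the remaining case $\mathcal{F}\equiv\mathcal{G}$ we have $f^{n}+af^{m}\equiv\mathcal{L}^{n}+a\mathcal{L}^{m}$; writing $h=f/\mathcal{L}$ and rearranging to $\mathcal{L}^{n-m}(h^{n}-1)=-a(h^{m}-1)$, I would use $\gcd(n,m)=1$ together with the non-constancy of $\mathcal{L}$ to force $h\equiv 1$, ruling out a nontrivial root of unity by the coprimality of $n$ and $m$. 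This gives $f\equiv\mathcal{L}$ and completes the proof.
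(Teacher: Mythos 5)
Your proposal reproduces, in outline, the original Yuan--Li--Yi argument, and it founders at exactly the point you yourself flag and then wave away. The hypothesis is only that $f$ and $\mathcal{L}$ share the \emph{set} $S$ CM; when $P(w)$ has a multiple zero $\alpha_{1}$ (by \emph{Lemma 4} of \cite{Ban-Kundu_lmj} this is the case $l=n-1$, with $\alpha_{1}$ of multiplicity exactly $2$), this does \emph{not} imply that $\mathcal{F}-1=-P(f)/b$ and $\mathcal{G}-1=-P(\mathcal{L})/b$ share $0$ CM. Concretely, set-sharing permits a zero of $f-\alpha_{1}$ of multiplicity $p$ to sit at a zero of $\mathcal{L}-\alpha_{i}$, $i\neq 1$, of multiplicity $p$; at such a point $P(f)$ vanishes to order $2p$ while $P(\mathcal{L})$ vanishes only to order $p$, and nothing in the hypotheses prevents infinitely many such points. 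Hence your assertion that $\frac{\mathcal{F}-1}{\mathcal{G}-1}$ has finitely many zeros and poles is false in general, the representation $\frac{\mathcal{F}-1}{\mathcal{G}-1}=R\,e^{\gamma}$ with $R$ rational need not exist, and everything downstream of it (the growth comparison, the Second Main Theorem dichotomy $\mathcal{F}\equiv\mathcal{G}$ or $(\mathcal{F}-1)(\mathcal{G}-1)\equiv c$) collapses. ``Proceeding on the matched-multiplicity reading'' is not a proof step: it silently replaces the stated hypothesis by a strictly stronger one.

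This is precisely the error the present paper was written to expose: the results of \cite{Yan-Li-Yi_Lith} (and the related ones in \cite{Ban-Kundu_lmj}, \cite{Sahoo-Sarkar_al_i.cuza}) are established only when $l=n$, i.e., when $P(w)$ has simple zeros, in which case your sketch is essentially the correct (and original) argument. For $l=n-1$ the paper does not re-prove Theorem C as stated; instead it strengthens the hypothesis to restricted sharing, $(S,k)^{*}=\{(\alpha_{1},k_{1}),(\{\alpha_{2},\ldots,\alpha_{n-1}\},k_{2})\}$ as in Theorem \ref{t1.1}, which is exactly what is needed to make $P(f)$ and $P(\mathcal{L})$ share $0$ with the required weight and so to legitimize a construction like yours. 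Whether Theorem C is true under plain set-sharing when $P$ has a multiple zero is left unresolved here, so the gap in your proposal cannot be patched without changing the hypotheses.
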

	We are not going to discuss the basic definitions or notations related to weighted sharing and others, as for those one can go through the previous papers \cite{Lahiri_comp.var}, \cite{Lahiri_Nagoya(01)}. \par Relaxing the CM sharing into finite weight, in 2020, Sahoo-Sarkar \cite{Sahoo-Sarkar_al_i.cuza} proved the following results.
	\begin{theoD}\cite{Sahoo-Sarkar_al_i.cuza} Let $S$ be defined as in {\it{Theorem B}} and $n>2m+4\;or\;2k+4$ \em{(}$k=n-m\geq 1$\em{)}. Let
		$f$ be meromorphic function having finitely many poles in $\mathbb{C}$ and let $\mathcal{L}$ be a
		non constant $L$-function. If $f$ and $\mathcal{L}$ share $(S,2)$ and $(c,0)$, then $f \equiv \mathcal{L}$.\end{theoD}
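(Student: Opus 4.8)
The plan is to run the standard weighted set--sharing machinery, using from the outset that both $f$ and $\mathcal{L}$ have only finitely many poles, so $\ol N(r,\infty;f)=O(\log r)=S(r,f)$ and $\ol N(r,\infty;\mathcal{L})=O(\log r)=S(r,\mathcal{L})$, and that an $L$--function is of order one with no finite Picard exceptional value. Since $f$ and $\mathcal{L}$ share the set $S$ and the value $c$, a preliminary comparison shows their characteristic functions are of the same order, so I may write $S(r)$ for either error term. First I would set $\mathcal{F}=P(f)$ and $\mathcal{G}=P(\mathcal{L})$, record $T(r,\mathcal{F})=nT(r,f)+S(r)$ and $T(r,\mathcal{G})=nT(r,\mathcal{L})+S(r)$, and observe that $E_f(S,2)=E_{\mathcal{L}}(S,2)$ makes $\mathcal{F}$ and $\mathcal{G}$ share $(0,2)$, with the proviso that when $l=n-1$ the unique double zero of $P$ (located by {\it Lemma 4} of \cite{Ban-Kundu_lmj}) turns a simple $S$--point into a double zero of $\mathcal{F}$, so this correspondence must be tracked carefully.

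Next I would introduce
\[
H=\left(\frac{\mathcal{F}''}{\mathcal{F}'}-\frac{2\mathcal{F}'}{\mathcal{F}}\right)-\left(\frac{\mathcal{G}''}{\mathcal{G}'}-\frac{2\mathcal{G}'}{\mathcal{G}}\right),
\]
designed so that every common zero of $\mathcal{F}$ and $\mathcal{G}$ of multiplicity at most two is a zero, not a pole, of $H$. Assuming $H\not\equiv0$, the logarithmic derivative lemma gives $m(r,H)=S(r)$ and hence $T(r,H)=N(r,\infty;H)+S(r)$, the poles of $H$ arising only from the zeros of $\mathcal{F}'$ and $\mathcal{G}'$ that are not common zeros of $\mathcal{F},\mathcal{G}$, from the negligibly many poles of $f,\mathcal{L}$, and from the shared $c$--points. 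Using $P'(w)=w^{m-1}(nw^{n-m}+am)$ to count the zeros of $\mathcal{F}'$ --- which come from the zeros of $f$, the $n-m$ values solving $nw^{n-m}=-am$, and the multiple points of $f$ --- and feeding everything into the second main theorem for $f$ (and symmetrically for $\mathcal{L}$), the estimate should close to $(n-\mathrm{const})\,T(r,f)\le S(r)$ precisely when $n>2m+4$, a contradiction. The companion range $n>2k+4$ with $k=n-m$ I would obtain by repeating the argument for the reciprocal polynomial $P_1$ and the set $S_1$ of \eqref{e1.2}, i.e.\ by interchanging the roles of $m$ and $n-m$. Either way $H\equiv0$.

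The main obstacle will be the algebra after $H\equiv0$. Integrating twice yields a bilinear relation $\dfrac{1}{\mathcal{F}}=\dfrac{A}{\mathcal{G}}+B$ with constants $A(\ne0)$ and $B$, which I would split into the cases $B\ne0$ and $B=0$. If $B\ne0$ then $\mathcal{F}=P(f)$ omits the value $1/B$; but $P(w)=1/B$ has at least $n-1\ge 4$ distinct roots, so $f$ would omit that many finite values, contradicting the second main theorem, and this branch is discarded. If $B=0$ then $\mathcal{G}\equiv A\mathcal{F}$, and here the shared value $(c,0)$ is decisive: since $\mathcal{L}$ has no finite Picard exceptional value it attains $c$ infinitely often, and at each common $c$--point $\mathcal{F}=\mathcal{G}=P(c)\ (\ne0)$, which forces $A=1$ and hence $\mathcal{F}\equiv\mathcal{G}$, that is $P(f)\equiv P(\mathcal{L})$.

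Finally, from $f^{n}-\mathcal{L}^{n}+a\bigl(f^{m}-\mathcal{L}^{m}\bigr)=0$ I would set $h=f/\mathcal{L}$: a non--constant $h$ would give the relation $\mathcal{L}^{\,n-m}=-a\,(h^{m}-1)/(h^{n}-1)$, whose zeros and poles (at the $m$th and $n$th roots of unity) cannot carry the multiplicities needed for a perfect $(n-m)$th power, a contradiction; while a constant $h$ must satisfy $h^{n}=h^{m}=1$, so $\gcd(n,m)=1$ forces $h=1$, that is $f\equiv\mathcal{L}$. I expect the genuinely delicate point to be the faithful bookkeeping of the double--zero case $l=n-1$ throughout the counting, together with the elimination of the bilinear branches --- precisely where \cite{Sahoo-Sarkar_al_i.cuza} and \cite{Yan-Li-Yi_Lith} demand the closest scrutiny.
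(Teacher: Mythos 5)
Your outline contains a genuine gap, and it is precisely the gap this paper was written to expose: the step ``$E_f(S,2)=E_{\mathcal{L}}(S,2)$ makes $\mathcal{F}=P(f)$ and $\mathcal{G}=P(\mathcal{L})$ share $(0,2)$'' is simply false when $P(w)$ has a multiple zero, i.e.\ when $l=n-1$. By {\it Lemma 4} of \cite{Ban-Kundu_lmj}, $P$ can have one double zero, say $\alpha_1$, the rest being simple. Set sharing of $(S,2)$ permits a point $z_0$ at which $f-\alpha_1$ has a zero of multiplicity $p$ while $\mathcal{L}-\alpha_2$ has a zero of the same multiplicity $p$ (the weighted set--sharing condition compares multiplicities across \emph{all} elements of $S$, not element by element). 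At such a point $\mathcal{F}$ vanishes to order $2p$ while $\mathcal{G}$ vanishes to order $p$, so $\mathcal{F}$ and $\mathcal{G}$ share only $(0,0)$, never $(0,1)$ or $(0,2)$. You flag this ``proviso'' and say it ``must be tracked carefully,'' but no amount of tracking closes it: the entire $H$--function machinery you invoke (controlling $\overline N_*(r,0;\mathcal{F},\mathcal{G})$ and the multiplicity--$\geq2$ counting functions by means of the weight) needs genuine weighted sharing of the zeros of $\mathcal{F},\mathcal{G}$, and with only $(0,0)$ available the numerical condition $n>2m+4$ (or $n>2k+4$) no longer yields the contradiction $H\equiv0$. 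This is exactly the error the present paper locates at p.~9, l.~2 of the proof of {\it Theorem 1.1} in \cite{Sahoo-Sarkar_al_i.cuza}; the paper does not prove Theorem D at all, but rather argues it is established only in the case $l=n$, where all zeros of $P$ are simple and your implication is valid.

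For $l=n-1$ the paper's remedy is not a more careful proof but a strictly stronger hypothesis: the \emph{restricted} sharing $(S,k)^*=\{(\alpha_1,k_1),(\{\alpha_2,\ldots,\alpha_{n-1}\},k_2)\}$, in which the double zero $\alpha_1$ is shared separately from the simple zeros. Only then does sharing of the set transfer to weighted sharing of $F=-f^n/(af^m+b)$ and $G=-\mathcal{L}^n/(a\mathcal{L}^m+b)$ at the value $1$, after which the argument proceeds via {\it Lemmas \ref{l2.5}--\ref{l2.6}} (this is {\it Theorem \ref{t1.2}}, the corrected form of Theorems B, D and H). Your remaining steps --- the two ranges $n>2m+4$ and $n>2k+4$ via $P$ and $P_1$, the integration of $H\equiv0$ to a bilinear relation, the use of the shared value $c\notin S$ to force $A=1$, and the factorization argument with $h=f/\mathcal{L}$ and $\gcd(n,m)=1$ --- are all consistent with the standard treatment and with this paper's own handling of $P(f)\equiv P(\mathcal{L})$ in the proof of {\it Theorem \ref{t1.3}}; but they rest on a foundation that collapses in the very case ($l=n-1$) that makes the statement problematic.
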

	\begin{theoE}\cite{Sahoo-Sarkar_al_i.cuza} Let $S$ be defined ame as in {\it{Theorem B}} and $n>2m+4$. Let
		$f$ be meromorphic function having finitely many poles in $\mathbb{C}$ and let $\mathcal{L}$ be a
		non constant $L$-function. If $f$ and $\mathcal{L}$ share $(S,2)$, then $f \equiv \mathcal{L}$\end{theoE}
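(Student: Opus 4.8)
The plan is to convert the set-sharing hypothesis into a value-sharing statement and then run the standard weighted-sharing machinery. Writing $F=P(f)$ and $G=P(\mathcal{L})$, the hypothesis that $f$ and $\mathcal{L}$ share $(S,2)$ says precisely that $F$ and $G$ share the value $0$ with weight $2$, where one must keep track of the fact that $P$ may carry a single multiple zero, so that an $\alpha_j$-point of $f$ of multiplicity $t$ becomes a zero of $F$ of multiplicity $m_j t$; this bookkeeping is the delicate point and I return to it below. Since $\mathcal{L}$ is an $L$-function it has at most one pole (at $s=1$) and $f$ has only finitely many poles, so $\ol N(r,\infty;f)=O(\log r)=S(r,f)$ and $\ol N(r,\infty;\mathcal{L})=S(r,\mathcal{L})$; hence all pole contributions are negligible. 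I also record $T(r,F)=nT(r,f)+S(r,f)$ and $T(r,G)=nT(r,\mathcal{L})+S(r,\mathcal{L})$.

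First I would introduce the auxiliary function
$$H=\left(\frac{F''}{F'}-\frac{2F'}{F}\right)-\left(\frac{G''}{G'}-\frac{2G'}{G}\right)$$
and separate the cases $H\not\equiv 0$ and $H\equiv 0$. If $H\not\equiv 0$, then $m(r,H)=S(r)$, while the weight-$2$ sharing forces the principal parts of $H$ to cancel at every common zero of $F$ and $G$ of multiplicity at most two; consequently $\ol N(r,\infty;H)$ is controlled by the zeros of $F$ and $G$ of multiplicity $\geq 3$, by the zeros of $F'$ and $G'$ lying off the zero sets (which come from the critical points of $P$, i.e.\ the zeros of $P'(w)=w^{m-1}(nw^{n-m}+am)$), and by the negligible poles. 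Substituting these bounds into the second fundamental theorem for $F$ and $G$ and using $T(r,F)=nT(r,f)+S(r)$, the zero-counting coming from the $w^m$-factor and from the $n-m$ simple critical points of $P$ collapses to a bound of the shape
$$(n-2m-4)\{T(r,f)+T(r,\mathcal{L})\}\leq S(r,f)+S(r,\mathcal{L}),$$
which contradicts $n>2m+4$. Hence $H\equiv 0$.

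When $H\equiv 0$ I would integrate twice to obtain a Möbius relation $\tfrac1F=\tfrac{A}{G}+B$ with $A\neq 0$, i.e.\ $\tfrac{1}{P(f)}=\tfrac{A}{P(\mathcal{L})}+B$. If $B\neq 0$ then $P(f)=P(\mathcal{L})/(BP(\mathcal{L})+A)$, so every pole of $P(f)$ sits over a zero of $BP(\mathcal{L})+A$; since $f$ has finitely many poles, $P(\mathcal{L})$ would take the value $-A/B$ only finitely often, and together with $\ol N(r,\infty;P(\mathcal{L}))=S(r)$ the second fundamental theorem applied to $P(\mathcal{L})$ forces $\ol N(r,0;P(\mathcal{L}))$ to carry essentially all of $T(r,P(\mathcal{L}))$, which is incompatible with $n>2m+4$ once one passes back through the shared zero set. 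Thus $B=0$ and $P(f)=C\,P(\mathcal{L})$ for some $C\in\mathbb{C}^*$.

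It remains to pass from $P(f)=C\,P(\mathcal{L})$ to $f\equiv\mathcal{L}$, which is exactly the assertion that $P(w)=w^n+aw^m+b$ is a strong uniqueness polynomial. I would establish this from the critical-point data: $P'(w)=w^{m-1}(nw^{n-m}+am)$ yields the critical value $b$ at $0$ and the $n-m$ critical values $\tfrac{a(n-m)}{n}d_k^{\,m}+b$ at the simple zeros $d_k$ of $nw^{n-m}+am$, and since $\gcd(n,m)=1$ forces $\gcd(n-m,m)=1$ these are pairwise distinct and distinct from $b$. The hypothesis $n>2m+4$ supplies enough distinct critical values to exclude any multiplicative symmetry $P(\{d_i\})=C\cdot P(\{d_i\})$ other than $C=1$, whence $C=1$ and $P(f)=P(\mathcal{L})$; a final factorization argument using $\gcd(n,m)=1$ then gives $f\equiv\mathcal{L}$. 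I expect the main obstacle to lie precisely here — in making the multiple-zero bookkeeping of the first paragraph feed correctly into the truncated counting functions, and in certifying the no-symmetry (strong uniqueness) property of $P$ — which is also where the situation singled out in the abstract is most delicate.
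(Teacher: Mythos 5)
Your proposal breaks down at its very first step, and the gap is not a repairable technicality: it is precisely the error that this paper was written to expose. You assert that $f$ and $\mathcal{L}$ sharing $(S,2)$ ``says precisely that $F=P(f)$ and $G=P(\mathcal{L})$ share the value $0$ with weight $2$,'' promising to settle the multiple-zero bookkeeping later; but that bookkeeping cannot be settled, because the assertion is false whenever $P(w)=w^{n}+aw^{m}+b$ has a multiple zero --- a case the hypothesis of Theorem E allows, since (by {\it Lemma 4} of \cite{Ban-Kundu_lmj}) $P$ may have exactly one double zero $\alpha_{1}$, so that $l=n-1$. Set sharing, with any weight, even CM, permits a point $z_{0}$ at which $f-\alpha_{1}$ has a zero of multiplicity $p$ while $\mathcal{L}-\alpha_{i}$ ($i\neq 1$, $\alpha_{i}$ a simple zero of $P$) has a zero of multiplicity $p$: the definition of $E_{f}(S,k)$ takes a union over $a\in S$, so the identity of which root is taken is lost. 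At such a point $F$ has a zero of multiplicity $2p$ but $G$ only of multiplicity $p$; hence under $(S,2)$ sharing one can only conclude that $F$ and $G$ share $(0,0)$. Your bound on $N(r,\infty;H)$ --- which needs the principal parts of $H$ to cancel at every common zero of $F$, $G$ of multiplicity at most two --- then collapses: each mismatched point above (multiplicity $2$ for $F$, $1$ for $G$) is a pole of $H$ not on your list, there can be infinitely many of them (as many as the $\alpha_{1}$-points of $f$, of order up to $T(r,f)$), and they destroy the coefficient count that is supposed to yield $(n-2m-4)\{T(r,f)+T(r,\mathcal{L})\}\leq S(r)$. The $H\equiv 0$ branch is infected as well, since there you again implicitly use that the zero divisors of $F$ and $G$ agree with multiplicity.

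You should also know that the paper does not prove Theorem E at all: it quotes it as one of the erroneous results of \cite{Sahoo-Sarkar_al_i.cuza}, locates exactly the step you reproduce as the mistake (``$F$, $G$ need not share $(1,s)$'' when $P$ has a multiple zero; see also p.~9, l.~2 of \cite{Sahoo-Sarkar_al_i.cuza}), states that Theorems C, E, F are correct only when $l=n$, and repairs the case $l=n-1$ only under the strictly stronger hypothesis of \emph{restricted} sharing: in {\it Theorem \ref{t1.1}}(i) one assumes $f$, $\mathcal{L}$ share $(S,1)^{*}=\{(\alpha_{1},1),(\{\alpha_{2},\ldots,\alpha_{n-1}\},2)\}$, i.e.\ the double zero is shared separately as a value, and it is exactly this extra assumption that legitimately yields $P(f)$, $P(\mathcal{L})$ sharing $(0,2)$, after which the weighted-sharing machinery you outline (the analogue of {\it Lemma \ref{l2.5}} together with the arguments of \cite{Ban-Kundu_lmj}) goes through. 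Your instinct in flagging the multiplicity bookkeeping as ``the main obstacle'' was sound; the point of this paper is that, under the hypothesis as stated, that obstacle is insurmountable, so for $l=n$ your outline is essentially the known argument, but as a proof of the statement as written --- which includes $l=n-1$ --- it fails at step one.
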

	Very recently, the present authors pointed out a number of gaps in the paper of Sahoo-Halder\cite{Sahoo-Hal_20_Lith} and rectifying them presented the extended result in case of all gradations of sharing. Our result was as follows: 
	
	\begin{theoF}\cite{Ban-Kundu_lmj}
		Let $S$ be defined as in (\ref{e1.1}). Also let $f$ be a meromorphic function having finitely many poles in $\mathbb{C}$ and $\mathcal{L}$ be a non-constant $L$-function such that $E_{f}(S,s)=E_{\mathcal{L}}(S,s)$. If
		\\(i) $s\geq 2$ and $n>2m+4$, or if
		\\(ii) $s=1$ and $n>2m+5$, then $f\equiv \mathcal{L}$.
	\end{theoF}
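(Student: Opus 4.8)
The plan is to run the classical set-sharing machinery built around Lahiri's auxiliary function, while exploiting the two features special to this setting: $f$ has finitely many poles and $\mathcal{L}$, being an $L$-function, has at most one pole (at $s=1$). Consequently $\ol N(r,\infty;f)=O(\log r)=S(r,f)$ and $\ol N(r,\infty;\mathcal{L})=O(\log r)=S(r,\mathcal{L})$, so every pole-counting contribution is negligible. First I would convert the set sharing into value sharing by setting
\[
F=\frac{f^{n}+af^{m}}{-b},\qquad G=\frac{\mathcal{L}^{n}+a\mathcal{L}^{m}}{-b}.
\]
Then $P(f)=0\iff F=1$ and $P(\mathcal{L})=0\iff G=1$, so the hypothesis $E_{f}(S,s)=E_{\mathcal{L}}(S,s)$ says precisely that $F$ and $G$ share $(1,s)$. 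Since $n>m$, the standard polynomial-in-$f$ estimate gives $T(r,F)=nT(r,f)+S(r,f)$ and likewise $T(r,G)=nT(r,\mathcal{L})+S(r,\mathcal{L})$.

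Next I would introduce
\[
H=\left(\frac{F''}{F'}-\frac{2F'}{F-1}\right)-\left(\frac{G''}{G'}-\frac{2G'}{G-1}\right)
\]
and separate the cases $H\not\equiv0$ and $H\equiv0$. In the case $H\not\equiv0$, a pole of $H$ can arise only from the multiple or non-shared $1$-points of $F,G$ and from zeros of $F'$, $G'$ not coming from $1$-points; estimating $m(r,H)$ by the lemma on logarithmic derivatives and bounding $N(r,H)$ through the weighted-sharing inequalities lets one control $\ol N(r,1;F)$. Feeding this into the second fundamental theorem for $F$ and for $G$ at the values $0,1,\infty$, and keeping track of the zeros of $F$, namely of $f^{m}(f^{n-m}+a)$, produces a growth inequality of the form $nT(r,f)\le(\text{expression in }m\text{ and }s)\,T(r,f)+S(r,f)$. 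The threshold emerges here: the $f^{m}$ factor and the $n-m$ simple zeros of $f^{n-m}+a$ contribute the $2m$, while the weight $s$ governs how many truncated $1$-points survive, yielding the constant $4$ when $s\ge2$ and $5$ when $s=1$. Under (i) or (ii) the inequality is then self-contradictory, so $H\equiv0$.

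Once $H\equiv0$, two integrations give the bilinear relation
\[
\frac{1}{F-1}=\frac{A}{G-1}+B,\qquad A\in\mathbb{C}\setminus\{0\},\ B\in\mathbb{C},
\]
and I would carry out the usual subcase analysis according to whether $B=0$ or $B\ne0$, and, when $B\ne0$, whether $A=B$. In each degenerate branch one obtains, after unwinding, either a Picard-type exceptional value for $F$ or $G$ or a multiplicative relation tying the $1$-points and poles of the two functions together; each such branch would force $\ol N(r,\infty;f)$ or $\ol N(r,\infty;\mathcal{L})$ to carry a positive proportion of the characteristic, contradicting their $O(\log r)$ bound, or would violate the already-established distribution of the zeros of $f^{n-m}+a$ once $n$ is large. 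Hence only the branch $B=0$, $A=1$ survives, giving $F\equiv G$, that is $P(f)\equiv P(\mathcal{L})$.

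Finally, from $f^{n}+af^{m}\equiv\mathcal{L}^{n}+a\mathcal{L}^{m}$ I would use the structure of $P$: since $\gcd(n,m)=1$ and $P$ has at most one multiple zero (Lemma 4, \cite{Ban-Kundu_lmj}), the values of $P$ at the distinct zeros of $P'$ are pairwise distinct, so $P$ is a uniqueness polynomial and $P(f)\equiv P(\mathcal{L})$ forces $f\equiv\mathcal{L}$. I expect the genuine obstacle to be the $H\not\equiv0$ estimate together with the elimination of the degenerate Möbius branches, since that is exactly where the balance between the $2m$ coming from $f^{m}(f^{n-m}+a)$ and the weight-dependent truncation must be kept precise; this is the point at which the one-unit gap between $n>2m+4$ and $n>2m+5$ is decided, and where an off-by-one slip of the kind the paper sets out to correct is most likely to intrude.
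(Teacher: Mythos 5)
Your very first reduction is the step that fails, and it fails in exactly the way this paper was written to expose. You assert that the hypothesis $E_{f}(S,s)=E_{\mathcal{L}}(S,s)$ ``says precisely that $F$ and $G$ share $(1,s)$.'' That is true only when every zero of $P(w)=w^{n}+aw^{m}+b$ is simple, i.e.\ $l=n$. But by \emph{Lemma 4} of \cite{Ban-Kundu_lmj}, $P$ may have one double zero $\alpha_{1}$ (the case $l=n-1$, which the statement allows), and set sharing with weight $s$ constrains the multiplicities with which $f$ and $\mathcal{L}$ assume the individual values $\alpha_{i}$, not the multiplicities of the zeros of $P(f)$ and $P(\mathcal{L})$. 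Concretely: if $z_{0}$ is a zero of $f-\alpha_{1}$ of multiplicity $p\leq s$ and simultaneously a zero of $\mathcal{L}-\alpha_{2}$ of multiplicity $p$, where $\alpha_{2}$ is a simple zero of $P$, this is fully consistent with $E_{f}(S,s)=E_{\mathcal{L}}(S,s)$, yet $F-1$ vanishes at $z_{0}$ to order $2p$ while $G-1$ vanishes only to order $p$. Hence $F$ and $G$ need not share $(1,s)$ for any $s\geq 1$; in general they share only $(1,0)$. Since your whole $H\not\equiv 0$ analysis --- where the weight enters through $\ol N_{*}(r,1;F,G)$ and the truncated counting functions, and where the thresholds $2m+4$ versus $2m+5$ are decided --- presupposes weight-$s$ sharing of the value $1$, the argument collapses precisely in the case $l=n-1$. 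This is the very slip made in \cite{Ban-Kundu_lmj}, \cite{Sahoo-Sarkar_al_i.cuza} and \cite{Yan-Li-Yi_Lith} that the present paper sets out to correct.

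You should also note that the paper contains no proof of \emph{Theorem F}: it is quoted from \cite{Ban-Kundu_lmj} as a prior result whose published proof commits exactly the error above, so the paper regards it as safe only for $l=n$ and replaces it, for $l=n-1$, by \emph{Theorem \ref{t1.1}}, whose hypothesis is the strictly stronger restricted sharing $(S,1)^*=\{(\alpha_{1},1),(\{\alpha_{2},\ldots,\alpha_{n-1}\},2)\}$, resp.\ $(S,0)^*=\{(\alpha_{1},0),(\{\alpha_{2},\ldots,\alpha_{n-1}\},1)\}$; under that hypothesis $P(f)$ and $P(\mathcal{L})$ genuinely share $(0,2)$, resp.\ $(0,1)$, and the machinery you describe then goes through via \emph{Lemmas \ref{l2.5}} and \emph{\ref{l2.6}}. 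A secondary weakness: even granting $F\equiv G$, your closing inference that $P$ is a uniqueness polynomial because its critical values are pairwise distinct is not a valid general criterion; in this literature the passage from $P(f)\equiv P(\mathcal{L})$ to $f\equiv\mathcal{L}$ is made by analyzing $h=f/\mathcal{L}$ and using that $\mathcal{L}$ has at most one pole and $f$ finitely many, so that $h$ would otherwise require too many Picard exceptional values (compare the proof of \emph{Theorem \ref{t1.3}}). But that is moot here: the fatal gap is the unjustified transfer of the sharing weight from the set $S$ to the value $1$, which is legitimate only when $P$ has simple zeros.
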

	\begin{theoG}\cite{Ban-Kundu_lmj}
		Let	$S_{1}$ be defined as in (\ref{e1.2}), $f$ be a meromorphic function having finitely many poles in $\mathbb{C}$ and $\mathcal{L}$ be a non-constant $L$-function such that $E_{f}(S_{1},s)=E_{\mathcal{L}}(S_{1},s)$. If
		\\(i) $s\geq 2$ and $n>2m+4$, or if
		\\(ii) $s=1$ and $n>2m+5$,
		then $f \equiv \mathcal{L}$.
	\end{theoG}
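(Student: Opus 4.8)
The plan is to reduce the set-sharing hypothesis to an ordinary value-sharing problem for two auxiliary functions and then run the standard weighted-sharing machinery, broadly parallel to the proof of \emph{Theorem F} but with the exponents $m$ and $n-m$ interchanged in the polynomial. First I would put
$$F=-\frac{1}{b}\left(f^{n}+af^{n-m}\right),\qquad G=-\frac{1}{b}\left(\mathcal{L}^{n}+a\mathcal{L}^{n-m}\right),$$
so that $F-1=-\tfrac{1}{b}P_{1}(f)$ and $G-1=-\tfrac{1}{b}P_{1}(\mathcal{L})$; the zeros of $F-1$ and $G-1$ are exactly the points where $f$ and $\mathcal{L}$ lie in $S_{1}$. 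I would then record the structural inputs that drive the estimates: by Valiron's theorem $T(r,F)=n\,T(r,f)+S(r,f)$ and $T(r,G)=n\,T(r,\mathcal{L})+S(r,\mathcal{L})$, while, because $f$ has finitely many poles and the $L$-function $\mathcal{L}$ has at most its single pole at $s=1$, both $\ol N(r,\infty;f)$ and $\ol N(r,\infty;\mathcal{L})$ are $O(\log r)$ and so are absorbed into $S(r,\cdot)$. Writing $f^{n}+af^{n-m}=f^{n-m}(f^{m}+a)$, the factor $f^{n-m}$ forces the zeros of $F$ along the locus $f=0$ to have multiplicity at least $n-m$, so those zeros contribute almost nothing to the truncated counting functions; the same holds for $G$.

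The heart of the matter is the Lahiri auxiliary function
$$\mathcal{H}=\left(\frac{F''}{F'}-\frac{2F'}{F-1}\right)-\left(\frac{G''}{G'}-\frac{2G'}{G-1}\right),$$
and the proof splits according to whether $\mathcal{H}\equiv 0$. In the case $\mathcal{H}\not\equiv 0$ I would bound $\ol N(r,\infty;\mathcal{H})$ through the shared $1$-points together with the weighted-sharing lemmas, and insert this into the second fundamental theorem applied to $F$ and $G$. Collecting all the terms yields an inequality whose coefficients are governed by $n$ and $m$; the thresholds $n>2m+4$ (for $s\ge 2$) and $n>2m+5$ (for $s=1$) are precisely what is needed to make that inequality self-contradictory, thereby forcing $\mathcal{H}\equiv 0$.

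Once $\mathcal{H}\equiv 0$, two integrations produce a bilinear relation $F=\dfrac{AG+B}{CG+D}$ with $AD-BC\ne 0$, and I would dispose of the resulting subcases ($C=0$ versus $C\ne 0$, and within each the possibilities $A=D$, $B=0$, and so on). In every degenerate subcase the relation would force $F$ or $G$ to omit a value or to acquire a zero/pole distribution incompatible with $f$ having finitely many poles and with $\mathcal{L}$ being an $L$-function of at most one pole; this is exactly where the $L$-function hypothesis is spent, leaving $F\equiv G$, i.e. $P_{1}(f)\equiv P_{1}(\mathcal{L})$, as the only survivor. I expect the genuine obstacle to lie not in the generic analysis above but in the case where $P_{1}$ carries its (at most one) double zero, so that $l=n-1$: there a simple $S_{1}$-point of $f$ becomes a \emph{double} $1$-point of $F$, so the weight recorded by $E_{f}(S_{1},s)$ and the weight seen by the $1$-points of $F$ decouple, and keeping every Nevanlinna estimate honest through this decoupling is the delicate point (and is exactly the situation the restricted-sharing notion is designed to control).

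Finally, from $f^{n}+af^{n-m}\equiv\mathcal{L}^{n}+a\mathcal{L}^{n-m}$ I would set $h=f/\mathcal{L}$ and obtain $\mathcal{L}^{m}=-a\,\dfrac{h^{n-m}-1}{h^{n}-1}$. Since $\gcd(n,n-m)=\gcd(n,m)=1$, the only common factor of numerator and denominator is $h-1$; hence, were $h$ non-constant, it would be transcendental (as $\mathcal{L}$ is) and would then attain all but at most two values infinitely often, so each of the $n-1>2$ nontrivial $n$-th roots of unity—being an unremoved pole of the right-hand side—would give infinitely many poles of $\mathcal{L}$, contradicting its finite pole set. Therefore $h$ is constant, and then $h^{n}=h^{n-m}=1$ with $\gcd(n,n-m)=1$ give $h\equiv 1$, that is $f\equiv\mathcal{L}$.
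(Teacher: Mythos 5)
Your proposal reproduces, almost step for step, the published proof of this statement in \cite{Ban-Kundu_lmj} --- and therefore inherits exactly the gap that the present paper was written to expose. The unjustified step is the reduction ``$E_{f}(S_{1},s)=E_{\mathcal{L}}(S_{1},s)$ with $s\geq 1$ implies $F$, $G$ share $(1,s)$.'' Since $\gcd(n,m)=1$ does not prevent it, the polynomial $P_{1}(w)=w^{n}+aw^{n-m}+b$ may have one double zero, say $\beta_{1}$, so that $l=n-1$. In that case a point $z_{0}$ at which $f-\beta_{1}$ has a zero of multiplicity $p$ may, under set sharing, be a zero of $\mathcal{L}-\beta_{j}$ of multiplicity $p$ for some $j\neq 1$ with $\beta_{j}$ simple; then $F-1$ vanishes at $z_{0}$ with multiplicity $2p$ while $G-1$ vanishes with multiplicity $p$ only. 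Hence $F$ and $G$ share the value $1$ merely IM, not with any positive weight, and the weighted-sharing lemma your ``$\mathcal{H}\not\equiv 0$'' case rests on (Lemma 2.6 of this paper, i.e.\ Lemma 7 of \cite{Ban-Kundu_lmj}), whose hypothesis is that $F$, $G$ share $(1,s)$, is not applicable: the term $\left(\frac{3}{2}-s\right)\ol N_{*}(r,1;F,G)$ and the thresholds $n>2m+4$, $n>2m+5$ all depend on that hypothesis. You do flag this yourself (``the weights \ldots decouple \ldots the delicate point''), but you offer no mechanism to repair it; and within the stated hypotheses the present paper's verdict is that none exists by this method --- the theorem is declared only ``partially true,'' and the remedy adopted is to \emph{strengthen the hypothesis} to restricted sharing $(S,k)^{*}=\{(\alpha_{1},k_{1}),(\{\alpha_{2},\ldots,\alpha_{n-1}\},k_{2})\}$, under which the implication to $(1,s)$-sharing of $F$, $G$ becomes legitimate (Theorem 1.1 and its stated analogue for $S_{1}$).

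For completeness: in the case $l=n$ (all zeros of $P_{1}$ simple) your argument is sound and coincides with the original one --- the reduction to $(1,s)$-sharing is then valid, Lemma 2.6 applies, and your concluding step is correct: from $P_{1}(f)\equiv P_{1}(\mathcal{L})$, setting $h=f/\mathcal{L}$ gives $\mathcal{L}^{m}=-a\,(h^{n-m}-1)/(h^{n}-1)$, and since $\gcd(n,n-m)=1$ each of the $n-1\geq 3$ nontrivial $n$-th roots of unity is an unremoved pole locus, so Picard's theorem together with the finiteness of the pole sets of $f$ and $\mathcal{L}$ forces $h\equiv 1$, i.e.\ $f\equiv\mathcal{L}$. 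But the statement as quoted includes the multiple-zero case, and there both your proof and the published one break down at the same point; a correct treatment requires either restricting to $l=n$ or replacing the sharing hypothesis by the restricted one, as this paper does.
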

	\begin{theoH}\cite{Ban-Kundu_lmj}
		Let $S$ be defined as in (\ref{e1.1}), $f$ be a meromorphic function having finitely many poles in $\mathbb{C}$ and let $\mathcal{L}$ be a non-constant
		$L$-function. %Let	$P(w) =w^{n} + aw^{m} + b $, where $n$ and $m$ are relatively prime positive integers and a, b are two non-zero finite constants. 
		Suppose $E_{f}(S,s)=E_{\mathcal{L}}(S,s)$ and for some finite $c\not\in S$, $f$, $\mathcal{L}$ share $(c,0)$. Also let $a_{i}\;(i=1,2,\ldots,n-m)$ be the zeros of $nz^{k}+ma$, where $k=n-m\;(\geq 1)$ and denote  $S'=\{a_1,a_2,\ldots,a_{n-m}\}$. \\ {\bf I.} Suppose $c=0$.   %$E_{f}(\{c\},0)=E_{\mathcal{L}}(\{c\},0)$ and
		\\ When {\em(i)} $s\geq 2$, $n>2m+2$ or {\em(ii)} $s=1$, $n>2m+3$ or 
		then  
		we have $f = \mathcal{L}$.\\  {\bf II.} Suppose $c\not =0$.\\	
		{\em(A)} Let $c\in S'$. When $l=n-1$ and {\em(i)} $s\geq 2$, $n> 2k+2$ or {\em(ii)} $s=1$, $n> 2k+3$  \\ then, we have $f \equiv \mathcal{L}$.\\
		{\em(B)} 
		Next let $c\not\in S'$. When {\em(i)} $s\geq 2$, $n>2k+4$ or
		{\em(ii)} $s=1$, $n> 2k+5$ 
		then we have $f \equiv \mathcal{L}$.
	\end{theoH}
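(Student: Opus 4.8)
Throughout I would use that both $f$ and $\mathcal{L}$ have only finitely many poles, so $\ol N(r,\infty;f)=O(\log r)=S(r,f)$ and likewise $\ol N(r,\infty;\mathcal{L})=S(r,\mathcal{L})$; thus every pole-counting term is absorbed into the error, which is exactly what permits the degree thresholds here to fall below the $n>2m+4$ of \emph{Theorem F}. The plan is to turn the set-sharing into a value-sharing problem and then run the Lahiri-type weighted-sharing machinery. Concretely, I would put
\[
F=-\tfrac{1}{b}\bigl(f^{n}+af^{m}\bigr),\qquad G=-\tfrac{1}{b}\bigl(\mathcal{L}^{n}+a\mathcal{L}^{m}\bigr),
\]
so that $P(f)=0\iff F=1$, $P(\mathcal{L})=0\iff G=1$, and $T(r,F)=n\,T(r,f)+S(r,f)$, $T(r,G)=n\,T(r,\mathcal{L})+S(r,\mathcal{L})$. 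The hypothesis $E_{f}(S,s)=E_{\mathcal{L}}(S,s)$ then says $F$ and $G$ share $(1,s)$, the multiplicity bookkeeping at the $1$-points being controlled by the fact that $P$ has at most one multiple zero (\emph{Lemma 4}, \cite{Ban-Kundu_lmj}). The shared value $c\notin S$ becomes the statement that $F$ and $G$ share the value $d:=1-P(c)/b\,(\ne 1)$ IM.

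The heart of the matter is the auxiliary function
\[
H=\left(\frac{F''}{F'}-\frac{2F'}{F-1}\right)-\left(\frac{G''}{G'}-\frac{2G'}{G-1}\right).
\]
I would assume $H\not\equiv0$ and bound $\ol N(r,1;F)$ together with the remaining reduced counting functions by a second--fundamental--theorem estimate, in which the common $d$-points coming from the shared $c$ enter with high multiplicity. The gain is dictated by the order to which $F$ is ramified over $d$ at such points, i.e.\ by the order of vanishing of $P(w)-P(c)$ at $w=c$: this order is $m$ when $c=0$ (since $P(w)-b=w^{m}(w^{k}+a)$), and $2$ when $c\in S'$ is a simple critical point arising from a zero of $nz^{k}+ma$. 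For $c\ne0$ a reciprocal substitution recasts everything in the exponent $k=n-m$, which is why both the critical set $S'$ and the thresholds in parts \textbf{II.(A)} and \textbf{II.(B)} are expressed through $k$. Assembling these estimates against the hypotheses $n>2m+2$, $n>2k+2$, $n>2k+4$ (each raised by one unit when the weight drops from $s\ge2$ to $s=1$) would contradict $H\not\equiv0$, forcing $H\equiv0$.

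Integrating $H\equiv0$ twice gives a Möbius relation $\dfrac{1}{F-1}=\dfrac{A}{G-1}+B$ with constants $A(\ne0)$ and $B$. I would then split into the standard subcases on $(A,B)$ and use the shared value $c$ — which supplies common ordinates lying off the value $1$ — to eliminate every branch except two: either $F\equiv G$, that is
\[
f^{n}+af^{m}\equiv\mathcal{L}^{n}+a\mathcal{L}^{m},
\]
or the degenerate relation $\bigl(f^{n}+af^{m}\bigr)\bigl(\mathcal{L}^{n}+a\mathcal{L}^{m}\bigr)\equiv b^{2}$.

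For the first alternative, writing $h=f/\mathcal{L}$ gives $\mathcal{L}^{k}\equiv-a\,\dfrac{h^{m}-1}{h^{n}-1}$; since $\gcd(n,m)=1$ the factor $h-1$ is the only common one, so a non-constant $h$ would make $\mathcal{L}^{k}$ have its poles exactly at the $n-1\ (\ge3)$ points where $h$ equals a nontrivial $n$-th root of unity, and the finiteness of the poles of $\mathcal{L}$ would force $h$ to assume each of these values only finitely often — impossible by the second fundamental theorem. Hence $h\equiv1$ and $f\equiv\mathcal{L}$. The main obstacle, and the step where the $L$-function structure is indispensable, is excluding the product relation: there the shared value $c$ forces the algebraic constraint $P(c)=2b$ at every common $c$-point, while the relation itself makes $\mathcal{L}^{m}(\mathcal{L}^{k}+a)$ have only finitely many zeros; since a non-constant $L$-function of order one with finitely many poles must, by the second fundamental theorem, assume $0$ and the roots of $w^{k}+a$ infinitely often, this is contradictory, and the branch is eliminated. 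Combining the two analyses yields $f\equiv\mathcal{L}$ in every case.
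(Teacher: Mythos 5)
Your decisive step --- ``The hypothesis $E_{f}(S,s)=E_{\mathcal{L}}(S,s)$ then says $F$ and $G$ share $(1,s)$, the multiplicity bookkeeping at the $1$-points being controlled by the fact that $P$ has at most one multiple zero'' --- is exactly where the argument breaks down, and it is in fact the very error this paper was written to expose. Since $\gcd(n,m)=1$, the polynomial $P(w)=w^{n}+aw^{m}+b$ can have one multiple zero $\alpha_{1}$, of multiplicity exactly $2$, in which case $l=n-1$; note that part II(A) of Theorem H explicitly assumes $l=n-1$, so this case cannot be set aside. Set sharing constrains the multiplicities of $f-\alpha_{i}$ and $\mathcal{L}-\alpha_{j}$, not of $P(f)$ and $P(\mathcal{L})$: if $z_{0}$ is a zero of $f-\alpha_{1}$ of multiplicity $p$, the hypothesis $E_{f}(S,s)=E_{\mathcal{L}}(S,s)$ permits $z_{0}$ to be a zero of $\mathcal{L}-\alpha_{i}$ with $i\neq 1$ of the same multiplicity $p$. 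At such a point $F-1=-P(f)/b$ vanishes to order $2p$ while $G-1$ vanishes only to order $p$. Hence $F$ and $G$ are guaranteed to share only $(1,0)$; the weighted sharing $(1,s)$ with $s\geq 1$, on which your use of the Lahiri machinery rests (the $\left(\frac{3}{2}-s\right)\overline{N}_{*}(r,1;F,G)$ gain, and with it the thresholds $n>2m+2$, $n>2k+2$, $n>2k+4$), simply does not follow. With only $(1,0)$ available, the second-fundamental-theorem estimate does not close at these degrees, so no contradiction to $H\not\equiv 0$ is reached.

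This is not a repairable slip inside your write-up: the paper's position is that Theorem H itself (quoted from \cite{Ban-Kundu_lmj}) is proved correctly only when $l=n$, and that for $l=n-1$ the hypothesis must be strengthened to \emph{restricted} sharing, in which the multiple zero $\alpha_{1}$ is shared separately from the simple zeros; this is precisely Theorem \ref{t1.2}, where $(S,1)^{*}=\{(\alpha_{1},1),(\{\alpha_{2},\ldots,\alpha_{n-1}\},k_{1})\}$ legitimately yields that $P(f)$, $P(\mathcal{L})$ share $0$ with weight, i.e., that $F$, $G$ share $1$ with weight. A secondary inaccuracy: $f$, $\mathcal{L}$ sharing $(c,0)$ does not make $F$, $G$ share $d=1-P(c)/b$ IM, since a $d$-point of $F$ need not be a $c$-point of $f$ (other roots of $P(w)-P(c)$ also produce $d$-points); one can only use the common $c$-points as a subset of the $d$-points, and the counting must be organized accordingly. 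Your remaining structure (the $h=f/\mathcal{L}$ analysis of $P(f)\equiv P(\mathcal{L})$, and the exclusion of the product relation via finiteness of poles) follows the standard route and is fine in outline, but the theorem as stated cannot be established along your lines without the restricted-sharing hypothesis.
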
 
	To proceed further the following observations and relevant discussions are needed.
	\par First from {\it{Lemma 4}} in \cite{Ban-Kundu_lmj}, we know there can be only one multiple zero of $P(w)$ of multiplicity exactly $2$. So as $n-1\leq l\leq n$, when $P(w)$ has multiple zero, then we have $l=n-1$. Let $f$ and a $L$-function $\mathcal{L}$ share the set $S=\{w:w^n+aw^m+b=0\}$ CM.  Throughout the paper let's consider $\alpha_1$ be a multiple zero and $\alpha_2,\ldots,\alpha_{n-1}$ be the simple zeros of $P(w)$. Now as $f$ and $\mathcal{L}$ share the set $(S,\infty)$, then $(f-\alpha_1)(f-\alpha_2)\ldots(f-\alpha_{n-1})$ and $(\mathcal{L}-\alpha_1)(\mathcal{L}-\alpha_2)\ldots(\mathcal{L}-\alpha_{n-1})$ share $(0,\infty)$. But it does not imply $P(f)=(f-\alpha_1)^{2}(f-\alpha_2)\ldots(f-\alpha_{n-1})$ and $P(\mathcal{L})= (\mathcal{L}-\alpha_1)^2(\mathcal{L}-\alpha_2)\ldots(\mathcal{L}-\alpha_{n-1})$ share $(0,\infty)$. Actually, since $f$ and $\mathcal{L}$ share $(S,\infty)$, it follows that if $z_0$ be a zero of $f-\alpha_1$ of multiplicity $p$, then it could be a zero of $\mathcal{L}-\alpha_i$, where $i\in \{1,2,3,\ldots,n-1\}$ of multiplicity $p$. Now if $z_0$ be a zero of $\mathcal{L}-\alpha_i$ ($i\not=1$) of multiplicity $p$, then clearly $P(f)$ has a zero at $z_0$ of multiplicity $2p$ where as the same will be a zero of $P(\mathcal{L})$ of multiplicity $p$. Hence when $P(w)$ has multiple zero, it does not imply that $P(f)$ and $P(\mathcal{L})$ share $(0,\infty)$. It is easy to check whatever may be the sharing condition under which $f$ and $\mathcal{L}$ share the set $S$, $P(f)$ and $P(\mathcal{L})$ always share $(0,0)$.

	%There is a problem in the paper of Yuan-Li-Yi \cite{Yan-Li-Yi_Lith} when $l=n-1$. Actually, 
	\par Assuming that $f$ and $\mathcal{L}$ share  $(S,\infty)$, in the proof of {\it{Theorem 5 }} (\cite{Yan-Li-Yi_Lith}), Yuan-Li-Yi  constructed a function [see (3.10)]  $h=R_1e^{\alpha}=\frac{f^n+af^m+b}{\mathcal{L}^n+a\mathcal{L}^m+b}$, where $R_1$ is a rational function and $\alpha$ is an entire function, i.e., $h$ has finitely many poles and zeros. Again in the proof of {\it{Theorem 6 }}(in \cite{Yan-Li-Yi_Lith}) before the line (3.40), Yuan-Li-Yi used  (3.10) and using this function $h$ and the argument $N(r,h)=N(r,0;h)=O(\log r),\;T(r,h)=O(r)$ [(see (3.20)-(3.23), last line of p. 8, just after (3.43) in p. 11)], Yuan-Li-Yi proved the rest part of their theorems. In the construction of $h$ they actually considered $P(f)$ and $P(\mathcal{L})$ share $(0,\infty)$ but from the previous discussion it is clear that when $P(w)$ has multiple zero, then $f$ and $\mathcal{L}$ share  $(S,\infty)$ does not always imply $P(f)$ and $P(\mathcal{L})$ share $(0,\infty)$. A close inspection will yield, $h=\frac{P(f)}{P(\mathcal{L})}=\frac{(f-\alpha_1)^{2}(f-\alpha_2)\ldots(f-\alpha_{n-1})}{(\mathcal{L}-\alpha_1)^2(\mathcal{L}-\alpha_2)\ldots(\mathcal{L}-\alpha_{n-1})}=\frac{f-\alpha_1}{\mathcal{L}-\alpha_1}.\frac{(f-\alpha_1)(f-\alpha_2)\ldots(f-\alpha_{n-1})}{(\mathcal{L}-\alpha_1)(\mathcal{L}-\alpha_2)\ldots(\mathcal{L}-\alpha_{n-1})}$, and though $(f-\alpha_1)(f-\alpha_2)\ldots(f-\alpha_{n-1})$ and $(\mathcal{L}-\alpha_1)(\mathcal{L}-\alpha_2)\ldots(\mathcal{L}-\alpha_{n-1})$ share $(0,\infty)$, from the factor $\frac{f-\alpha_1}{\mathcal{L}-\alpha_1}$, it follows that $h$ can have infinitely many zeros and poles according as $f-\alpha_1$, $\mathcal{L}-\alpha_1$ have infinitely many zeros. So clearly if $P(w)$ has a multiple zero, then we can not assure the existence of such rational $R_1$ such that $h=R_1e^\alpha$, unless $f$ and $\mathcal{L}$ share the multiple zero of $P(w)$. Hence the results in \cite{Yan-Li-Yi_Lith}, i.e., {\it{ Theorem B}}, {\it{ Theorem C}} are properly correct only when $P(w)$ has simple zero (i.e., $l=n$). 
	
	Unfortunately, at the time of proving the theorems, the present authors \cite{Ban-Kundu_lmj} also carried forwarded the same mistakes  under a different method than that was adopted in \cite{Yan-Li-Yi_Lith}. In the proofs of {\it{Theorems F - H}}, it was assumed that when $f$, $\mathcal{L}$ share $S$ with weight $s\geq 1$, then $F$, $G$ share $(1,s)$. Clearly when $l=n$, i.e., $P(w)\;(P_1(w))$ has only simple zeros, then the argument is absolutely correct. But the problem arises when $P(w)\;(P_1(w))$ contains  multiple zero. In this case, $F$, $G$ need not to share $(1,s)$ as we have already elaborated in the first paragraph of the discussion. So p.7, l.1 in {\it{Lemma 6}}; p.12, l.2 in the proof of {\it{Theorem 1}}; p. 10, l.2, in the proof of {\it{Theorem 2}}  and p.14, l.1 in the proof of {\it{Theorem 3}} in \cite{Ban-Kundu_lmj}  are not true in general.  Therefore the theorems in  \cite{Ban-Kundu_lmj} are partially true for $s\geq 1$. In p.9, l.2 in the proof of {\it{Theorem 1.1 }}of Sahoo-Sarkar's paper \cite{Sahoo-Sarkar_al_i.cuza} the same errors can be found.
	\par  However, for the case of IM sharing of the sets in \cite{Ban-Kundu_lmj}, there is nothing wrong as in this case $F$ and $G$ share $(1,0)$ and $P(f)$ and $P(g)$ share $(0,0)$ and so our results \cite{Ban-Kundu_lmj} are correct irrespective of  the presence of simple or multiple zeros in $P(w)$ or $P_1(w)$. Consequently there were no ambiguity in our results \cite{Ban-Kundu_lmj} for IM sharing and they improve and rectify the results of Sahoo-Halder \cite{{Sahoo-Hal_20_Lith}}.
	
	\par In another previous paper \cite{Kundu-Ban_palermo}, though we mainly focused to solve a question raised by Lin-Lin \cite{Lin-Lin_filomat} but also we re-investigated {\it{ Theorem B}} for $c=0$ as follows: 
	\begin{theoI}\cite{Kundu-Ban_palermo}
		Let $f$ be a meromorphic function in $\mathbb{C}$ with finitely many poles and  $S$ be defined in (\ref{e1.1}). Here $a$, $b$ are two nonzero constants and $n$, $m$  are relatively prime positive integers such that $n>2m$. If $f$ and a non-constant $L$-function $\mathcal{L}$ share $(S,\infty)$ and $(0,0)$, then we will get $\mathcal{L} \equiv f.$
	\end{theoI}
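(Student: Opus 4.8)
The plan is to pass to the normalized functions $F=-\tfrac1b\bigl(f^{n}+af^{m}\bigr)$ and $G=-\tfrac1b\bigl(\mathcal L^{n}+a\mathcal L^{m}\bigr)$, so that $F-1=-P(f)/b$ and $G-1=-P(\mathcal L)/b$; thus $f,\mathcal L$ sharing $(S,\infty)$ is encoded as a sharing of the value $1$ by $F,G$, while the extra hypothesis enters through the fact that $0\notin S$ and $P(0)=b$, so that $f,\mathcal L$ sharing $(0,0)$ says precisely that $F$ and $G$ vanish at exactly the same points, namely the common zeros of $f$ and $\mathcal L$. First I would record the two structural facts that drive everything: $\mathcal L$ has order one and finitely many poles, and $f$ has finitely many poles by hypothesis, so $\overline{N}(r,\infty;f)+\overline{N}(r,\infty;\mathcal L)=O(\log r)$; and, after comparing characteristic functions through the set sharing, $\rho(f)\le 1$ as well.

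The heart of the matter, exactly as flagged in the discussion preceding the statement, is the case $l=n-1$ in which $P$ carries a double zero $\alpha_1$; the case $l=n$ of simple zeros transfers the CM sharing intact and is covered by the existing arguments. Writing $P(w)=(w-\alpha_1)Q(w)$ with $Q(w)=(w-\alpha_1)\prod_{i=2}^{n-1}(w-\alpha_i)$, the products $Q(f)$ and $Q(\mathcal L)$ genuinely do share $(0,\infty)$, so with the pole count above the quotient $Q(f)/Q(\mathcal L)$ equals $R_1e^{\gamma}$ for a rational $R_1$ and an entire $\gamma$, and by the order estimate $\gamma$ is a polynomial of degree at most one. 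Hence
\be
h:=\frac{P(f)}{P(\mathcal L)}=\frac{f-\alpha_1}{\mathcal L-\alpha_1}\,R_1e^{\gamma}.
\ee
Now the value sharing enters decisively: at every common zero of $f$ and $\mathcal L$ one has $P(f)=P(\mathcal L)=b$, whence $h=1$ there, so $\overline{N}(r,0;f)\le \overline{N}(r,1;h)+O(\log r)\le T(r,h)+O(\log r)$, and $T(r,h)=O(r)$ since $h$ has order at most one.

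I would then aim to prove $h\equiv1$, equivalently $P(f)\equiv P(\mathcal L)$. Feeding the $n+1$ targets $\{\alpha_1,\dots,\alpha_{n-1},0,\infty\}$ into the second main theorem for $f$ gives
\be
(n-1)\,T(r,f)\le \sum_{i=1}^{n-1}\overline{N}(r,\alpha_i;f)+\overline{N}(r,0;f)+\overline{N}(r,\infty;f)+S(r,f),
\ee
where the set sharing bounds $\sum_i\overline{N}(r,\alpha_i;f)=\overline{N}(r,0;P(f))$ by the reduced count of the $S$-points common to $f$ and $\mathcal L$, and $(0,0)$ supplies both $\overline{N}(r,0;f)=\overline{N}(r,0;\mathcal L)$ and the estimate $\overline{N}(r,0;f)\le T(r,h)+O(\log r)$ just obtained. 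Combining these upper bounds with the lower bound from the second main theorem and with $n>2m$ forces the counting inequalities to be equalities, which is incompatible with $h\not\equiv1$; so $h\equiv1$. Once $P(f)\equiv P(\mathcal L)$, I would conclude from the factorization $f^{n}-\mathcal L^{n}+a(f^{m}-\mathcal L^{m})=0$: assuming $f\not\equiv\mathcal L$ and dividing by $f-\mathcal L$, an analysis of the resulting relation together with $\gcd(n,m)=1$ (which rules out $f=t\mathcal L$ for a root of unity $t\ne1$ and, using $n>2m$, the remaining nonconstant possibilities) yields a contradiction, so $f\equiv\mathcal L$.

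The step I expect to be the genuine obstacle is proving $h\equiv1$ in the double-zero case: because $F$ and $G$ share $1$ only with weight $0$ there, the CM-type Wronskian and auxiliary-function estimates do not apply directly, and one must instead control the extra factor $(f-\alpha_1)/(\mathcal L-\alpha_1)$ — equivalently, show that $f$ and $\mathcal L$ cannot fail to share the multiple point $\alpha_1$ — while spending only the weak bound $n>2m$. It is precisely here that the additional value $0$ is indispensable: it both pins $h$ to the value $1$ along the zeros of $f$ and furnishes the extra target that makes the second main theorem count close at $n>2m$, rather than the much larger bounds required in its absence.
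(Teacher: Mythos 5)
You should first be aware of the status of the statement you were asked to prove: this paper does not prove Theorem I --- it exposes it. The paper's stated position is that the original proof in \cite{Kundu-Ban_palermo} is valid only when $P(w)$ has simple zeros ($l=n$), and its corrected replacement, Theorem \ref{t1.3}, adds the hypothesis of restricted sharing $(S,\infty)^*=\{(\alpha_{1},\infty),(\{\alpha_2,\ldots,\alpha_{n-1}\},\infty)\}$ precisely because ordinary $(S,\infty)$ sharing cannot control the factor $(f-\alpha_1)/(\mathcal{L}-\alpha_1)$. Your proposal attempts the unrestricted statement and founders on exactly this point. Your decomposition $h=P(f)/P(\mathcal{L})=\frac{f-\alpha_1}{\mathcal{L}-\alpha_1}R_1e^{\gamma}$ is correct, and you rightly call this factor ``the genuine obstacle,'' but your resolution rests on the assertion that ``$T(r,h)=O(r)$ since $h$ has order at most one,'' which fails on two counts. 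First, order at most one never yields $T(r,h)=O(r)$: a non-constant $L$-function itself has order one yet $T(r,\mathcal{L})\asymp r\log r$. Second, and decisively, $h$ is not of the form $(\text{rational})\cdot e^{q}$ with $\deg q\le 1$ in your setting: a point where $f=\alpha_1$ and $\mathcal{L}=\alpha_i$ with $i\neq 1$ (which $(S,\infty)$ sharing permits, in unbounded supply) is a zero of $h$, and the counting function of such points can be comparable to $T(r,f)\asymp r\log r$. So the only available bound is $T(r,h)=O(r\log r)$, and your key inequality $\ol N(r,0;f)\le \ol N(r,1;h)+O(\log r)\le T(r,h)+O(\log r)$ degenerates to a triviality. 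The entire force of the paper's corrected proof is the gain from $r\log r$ to $r$: restricted sharing makes $P(f)$ and $P(\mathcal{L})$ share $(0,\infty)$, hence $h=Q_oe^{q}$ with $Q_o$ rational and $\deg q\le 1$, hence $\ol N(r,0;f)\le \ol N(r,1;Q_oe^{q})=O(r)=o(T(r,\mathcal{L}))$, which is what produces the contradiction; that gain is exactly what is unavailable to you.

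Two further points. The step in which you claim that the second main theorem with targets $\{\alpha_1,\ldots,\alpha_{n-1},0,\infty\}$ together with $n>2m$ ``forces the counting inequalities to be equalities, which is incompatible with $h\not\equiv1$'' is not an argument: no mechanism is given, and without a genuine $o(r\log r)$ bound on some counting function there is nothing for the hypothesis $n>2m$ to act on. By contrast, your treatment of the case $P(f)\equiv P(\mathcal{L})$ is essentially sound and matches the paper's: writing $h=f/\mathcal{L}$, the sharing of $(0,0)$ together with $P(f)=P(\mathcal{L})$ upgrades to CM sharing of $0$ and $\infty$, so $h$ has no zeros or poles, and since $\mathcal{L}^{n-m}=-a(h^m-1)/(h^n-1)$ and $\mathcal{L}$ has finitely many poles, $h$ can take each $n$-th root of unity $\neq 1$ only finitely often, forcing $h\equiv 1$. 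But that case was never the obstruction. In sum, your argument establishes the theorem only in the simple-zero case $l=n$, where it coincides with what was already known; for $l=n-1$ the gap that this paper identifies remains open in your proposal.
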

	\par As {\it{ Theorem I}}, was based on the further investigations on {\it{ Theorem B}}, the same mistake was done in {\it{ Theorem I}} and so the same holds only when $P(w)$ has simple zeros. Not only that in the proof of {\it{ Theorem I}}, at the time of estimating the zeros of $\frac{\hat{H}}{Q}-1$, mistakenly we skipped the part when $\frac{\hat{H}}{Q}=1$, [( see p.13, equation (3.3) \cite{Kundu-Ban_palermo})]. The detail explanation when $\frac{\hat{H}}{Q}=1$ is given in the proof of {\it{ Theorem \ref{t1.3}}} stated later on. 
	
	%Actually, for $l=n$, it was nothing but $\hat{H}=Q$ and as $P(w)$ is an uniqueness polynomial (see \cite{P.Li-C.C.Yang_kodai(95)}), it follows that,  $P(f)\equiv P(\mathcal{L})$ implies $f\equiv \mathcal{L}$. But for $l=n-1$ the case is quite different. So in this case for the validity of the uniqueness of $f$ and $\mathcal{L}$, further discussion is required. 
	\par Clearly the above series of results fail to hold for $l=n-1$, as in this case all the authors considered the same mistakes. So at this present juncture, it is high time to rectify all of them under insertion of minimal set of conditions. To this end, we are now going to introduce the definition of  restricted weighted sharing of sets which can be obtained from the traditional definition of weighted sharing of sets with some subtle modifications. With the help of this new definition, we will be able to overcome the mistakes in the above theorems. That is to say, we would like to re-investigate the nature of sharing when the generating polynomial of the set, which is not a PURSM, contains a multiple zero. 
	
	\begin{defi}
		Let us consider a set $\hat{S}=\{a_1,a_2,\ldots,a_p\}\subset \mathbb{C}$ and $f$, $g$ be two non-constant meromorphic functions in $\mathbb{C}$. Let $\hat{S}=S_1\cup S_2\ldots\cup S_r$, where $r$ is a positive integer and $S_i\cap S_j=\phi\;(1\leq i, j\leq r;i\not=j)$. We say $f$ and $g$ share the set $\hat{S}$ with weight $k$ in a restricted manner if $f$, $g$ share	$(S_1,k_1), (S_2,k_2),\ldots,(S_r,k_r)$ in usual way as defined in \cite{Lahiri_Nagoya(01)}, where $k=\min\{k_1,k_2,\ldots,k_r\}$. For the sake of convenience we will denote the restricted sharing of $\hat{S}$ with weight $k$ by  $(\hat{S},k)^{*}=\{(S_1,k_1), (S_2,k_2),\ldots,(S_r,k_r)\}$. Clearly $(S,k)^*$ always implies $(S,p)^*$ for all $p\leq k$. When $k=\infty$ or $0$, we say $f$, $g$ share the set restricted CM or IM. Naturally $r=1$ the definition coincides with that of the traditional one.  
	\end{defi}
	
	\par With the help of the definition of restricted sharing we can obtain the following set of results easily, which rectifies a series of erroneous results. Since the results in \cite{Ban-Kundu_lmj}, \cite{Sahoo-Sarkar_al_i.cuza}, \cite{Yan-Li-Yi_Lith} are all correct when $l=n$ i.e., $P(w)$ ($P_1(w)$) has all simple zeros, in the following theorems and corollaries we only consider $P(w)$ has multiple zero and $S$ has $l=n-1$ elements.\par As {\it{Theorem F}} is the generalization of {\it{Theorem E}} and improvement of {\it{Theorem C}}, using restricted set sharing
	we will rectify only {\it{Theorem F}} which in turn overcome the erroneous issues in  {\it{Theorem C}} and {\it{Theorem E}}.
	
	\begin{theo}\label{t1.1}
		Let $S$ be defined as in (\ref{e1.1}). Also let $f$ be a meromorphic function having finitely many poles in $\mathbb{C}$ and $\mathcal{L}$ be a non-constant $L$-function and $l=n-1$. If  \\(i) $f$ and $\mathcal{L}$ share $(S,1)^*=\{(\alpha_{1},1),(\{\alpha_2,\ldots,\alpha_{n-1}\},2)\}$ and $n>2m+4$; or if \\(ii) $f$ and $\mathcal{L}$ share $(S,0)^*=\{(\alpha_{1},0),(\{\alpha_2,\ldots,\alpha_{n-1}\},1)\}$ and $n>2m+5$, \\then we will get $f\equiv \mathcal{L}$.
	\end{theo}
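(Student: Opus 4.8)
The plan is to reduce the set-sharing hypothesis to a value-sharing problem for auxiliary functions and then run the weighted-sharing/second-main-theorem machinery, but keeping the two ``species'' of $1$-points separate — this bookkeeping is exactly what the restricted sharing is built to permit, and it is what the erroneous proofs conflated. First I would normalize by setting
$$F = \frac{f^{n}+af^{m}}{-b}, \qquad G = \frac{\mathcal{L}^{n}+a\mathcal{L}^{m}}{-b},$$
so that $F=1\Leftrightarrow P(f)=0\Leftrightarrow f\in S$ and $G=1\Leftrightarrow \mathcal{L}\in S$. Since $f$ has finitely many poles and $\mathcal{L}$ at most one, $\overline N(r,\infty;F)+\overline N(r,\infty;G)=O(\log r)$, so the pole terms are negligible. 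A short preliminary step, applying the second main theorem to $F$ and $G$ and using that their $1$-points coincide, shows that $f$ is transcendental and that $T(r,f)$ and $T(r,\mathcal{L})$ are of the same growth, whence $S(r,f)=S(r,\mathcal{L})=:S(r)=O(\log r)$. I would also record the effect of the double zero: an $\alpha_{1}$-point of $f$ of multiplicity $p$ is a $1$-point of $F$ of multiplicity $2p$, while an $\alpha_{j}$-point ($j\ge 2$) of multiplicity $p$ is a $1$-point of $F$ of multiplicity $p$.

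The decisive translation step is to read off how $F$ and $G$ share the value $1$. Under $(S,1)^{*}=\{(\alpha_{1},1),(\{\alpha_{2},\dots,\alpha_{n-1}\},2)\}$ an $\alpha_{1}$-point of $f$ can only meet an $\alpha_{1}$-point of $\mathcal{L}$, and a simple-zero point of $f$ can only meet a simple-zero point of $\mathcal{L}$; this is precisely the obstruction flagged in the introduction, namely that without the restriction an $\alpha_{1}$-point of $f$ (a $1$-point of $F$ of even multiplicity $2p$) could be matched to a simple-zero point of $\mathcal{L}$ (a $1$-point of $G$ of multiplicity $q$), destroying the multiplicity balance. With the restriction the $1$-points split cleanly into a \emph{double species} (multiplicities $\ge 2$, shared with weight $1$, resp. $0$ in case (ii)) and a \emph{simple species} (shared with weight $2$, resp. $1$), and I can estimate the two contributions separately.

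Next I would form the Lahiri difference operator
$$H=\left(\frac{F''}{F'}-\frac{2F'}{F-1}\right)-\left(\frac{G''}{G'}-\frac{2G'}{G-1}\right)$$
and split into $H\equiv 0$ and $H\not\equiv 0$. In the case $H\not\equiv 0$, a pole of $H$ can occur only at a $1$-point where the multiplicities of $F$ and $G$ disagree, or at poles/ramification of $F,G$; the weight-$2$ (resp. weight-$1$) sharing of the simple species tightly restricts the simple $1$-points where they disagree, and the double species, having multiplicity $\ge 2$, enters only through a reduced $\tfrac12$-count. Feeding these bounds into the second main theorem for $F$ and for $G$ (with the ramification coming from the zeros $w=0$ and the roots of $nw^{n-m}+am=0$ of $P'$) produces a Nevanlinna inequality whose left side is a fixed positive multiple of $n\,\{T(r,f)+T(r,\mathcal{L})\}$ while the right side carries a coefficient controlled by $m$; the thresholds $n>2m+4$ in (i) and $n>2m+5$ in (ii) are exactly what force the left side to dominate, a contradiction, so $H\equiv 0$. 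This honest split-weight count, with the double-zero contribution isolated into a reduced term and the extra unit in (ii) paying for the weaker weights $(0)$ and $(1)$, is the part I expect to be the main obstacle.

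In the case $H\equiv 0$, integrating twice gives a Möbius relation
$$\frac{1}{F-1}=\frac{A}{G-1}+B.$$
I would run the standard subcase analysis: the subcases forcing $F$ (or $G$) to omit a finite value, and the reciprocal-type subcase $FG\equiv 1$, are incompatible with the value distribution of $L$-functions — the former clashes with the $r\log r$ growth of $T(r,\mathcal{L})$ via the second main theorem, and the latter would overload $P(f)P(\mathcal{L})$ with poles, contradicting that $f$ has finitely many poles and $\mathcal{L}$ at most one. The surviving subcase gives $F\equiv G$, that is $f^{n}+af^{m}\equiv \mathcal{L}^{n}+a\mathcal{L}^{m}$; since $\gcd(n,m)=1$, the usual factorization argument (writing $t=f/\mathcal{L}$ and ruling out nonconstant $t$ by the omitted-value obstruction) forces $t\equiv 1$ and hence $f\equiv\mathcal{L}$. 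This closing algebraic reduction is identical to the one underlying the already-correct $l=n$ version (\emph{Theorem F}), so I would invoke it rather than repeat it.
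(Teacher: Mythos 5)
Your opening translation step is exactly the paper's: because the restricted sharing forbids an $\alpha_{1}$-point of $f$ from meeting a simple-zero point of $\mathcal{L}$ and vice versa, the hypothesis $(S,1)^{*}$ (resp.\ $(S,0)^{*}$) does imply that $P(f)$ and $P(\mathcal{L})$ share $(0,2)$ (resp.\ $(0,1)$), and your overall architecture --- Lahiri's function $H$, a second-main-theorem contradiction forcing $H\equiv 0$, then the M\"{o}bius relation and the $h=f/\mathcal{L}$ endgame --- is also the paper's (which executes it by quoting Lemma \ref{l2.5}/\ref{l2.6} and the proofs of Theorems 1 and 3 of \cite{Ban-Kundu_lmj}).

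However, there is a genuine quantitative gap at the decisive step: your auxiliary function $F=-(f^{n}+af^{m})/b$ is the wrong normalization for $P(w)=w^{n}+aw^{m}+b$, and with it the $H\not\equiv 0$ case cannot yield a contradiction under $n>2m+4$. Indeed $F=-f^{m}(f^{n-m}+a)/b$, so the value $0$ of $F$ is taken at $f=0$ \emph{and} at the $n-m$ roots $\delta_{i}$ of $w^{n-m}+a=0$; likewise the critical points of $P$ that you invoke, the zeros of $P'(w)=w^{m-1}(nw^{n-m}+am)$, number $1+(n-m)$. Hence every counting term $\overline{N}(r,0;F)$, $N_{2}(r,0;F)$ entering the second-main-theorem estimate carries a coefficient of order $n-m$, not $m$ as you assert: the inequality comes out as roughly $\tfrac{n}{2}\{T(r,f)+T(r,\mathcal{L})\}\leq (2+(n-m))\{T(r,f)+T(r,\mathcal{L})\}+S(r)$, i.e.\ $n\leq 2(n-m)+4$, which is automatically satisfied (no contradiction) whenever $n\geq 2m-4$, in particular under your hypothesis $n>2m+4$. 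The paper avoids this by using the ratio normalization of Lemma \ref{l2.5}, $F=-\frac{f^{n}}{af^{m}+b}$, $G=-\frac{\mathcal{L}^{n}}{a\mathcal{L}^{m}+b}$: then $F-1=-P(f)/(af^{m}+b)$ still vanishes exactly at the $S$-points of $f$ with the correct multiplicities, the zeros of $F$ come only from $f=0$ (with multiplicity at least $n$), and the only auxiliary values are the $m$ roots $\gamma_{i}$ of $aw^{m}+b=0$, entering through $\sum_{i=1}^{m}\{N_{2}(r,\gamma_{i};f)+N_{2}(r,\gamma_{i};\mathcal{L})\}\leq m\{T(r,f)+T(r,\mathcal{L})\}$; this is precisely what produces the thresholds $n>2m+4$ and $n>2m+5$. (The same defect leaks into your $H\equiv 0$ analysis: in the omitted-value subcase your $F$ gives only $(m-1)T(r,\mathcal{L})\leq O(\log r)$, vacuous for $m=1$.) The rest of your plan survives verbatim once you switch to the paper's $F$ and $G$.
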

	%\begin{cor} Let $l=n-1$. If \\(i) $f$ and $\mathcal{L}$ share $(S,1)^*=\{(\alpha_{1},1),(\{\alpha_2,\ldots,\alpha_{n-1}\},s)\}$, where $s\geq 2$ and $S$ contains at least $6$ elements; or if \\(ii) $f$ and $\mathcal{L}$ share $(S,0)^*=\{(\alpha_{1},0),(\{\alpha_2,\ldots,\alpha_{n-1}\},s)\}$ where $s\geq 1$ and $S$ contains atleast $7$ elements;, then we will get $f\equiv\mathcal{L}$.\end{cor}
	\par Analogous result corresponding to {\it{Theorem \ref{t1.1}}}, can be obtained for the set $S_1$.
	\par Next we are going to present the corrected form of {\it{Theorem H}},  which will immediately rectify {\it{Theorem B}} and {\it{Theorem D}}.
	\begin{theo}\label{t1.2}Let $S$ be defined as in (\ref{e1.1}), $l=n-1$ and $S'$ be defined as in {\it{Theorem H}}. Also let $f$ be a meromorphic function having finitely many poles in $\mathbb{C}$, $\mathcal{L}$ be a non-constant
		$L$-function. %Let	$P(w) =w^{n} + aw^{m} + b $, where $n$ and $m$ are relatively prime positive integers and a, b are two non-zero finite constants. 
		For two positive integers $k_1$, $k_2$ let $f$, $\mathcal{L}$ share
		(i) $(S,1)^*=\{(\alpha_{1},1),(\{\alpha_2,\ldots,\alpha_{n-1}\},k_1)\}$, (ii)$(S,0)^*=\{(\alpha_{1},0),(\{\alpha_2,\ldots,\alpha_{n-1}\},k_2)\}$  and for some finite $c\not\in S$, $f$, $\mathcal{L}$ share $(c,0)$.  
		\\ {\bf I.} Suppose $c=0$.   %$E_{f}(\{c\},0)=E_{\mathcal{L}}(\{c\},0)$ and
		\\ When {\em(i)} $k_1\geq 2$, $n>2m+2$ or {\em(ii)} $k_2\geq 1$, $n>2m+3$; then  
		we have $f\equiv\mathcal{L}$.\\  {\bf II.} Suppose $c\not =0$.\\	
		{\em(A)} Let $c\in S'$. If {\em(i)} $k_1\geq 2$, $n> 2k+2$ or {\em(ii)} $k_2\geq 1$, $n> 2k+3$;  then we have $f\equiv \mathcal{L}$.\\
		{\em(B)} 
		Next let $c\not\in S'$. If {\em(i)} $k_1\geq 2$, $n>2k+4$ or
		{\em(ii)} $k_2\geq 1$, $n> 2k+5$, 
		then we have $f \equiv \mathcal{L}$
		
	\end{theo}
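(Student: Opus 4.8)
The plan is to follow the method of the corrected Theorem \ref{t1.1}, but now keeping track of the extra value $c$ that $f$ and $\mathcal{L}$ share. The crucial structural observation, already explained in the introduction, is that the restricted sharing $(S,1)^*$ (resp.\ $(S,0)^*$) guarantees that $f$ and $\mathcal{L}$ share the \emph{multiple} zero $\alpha_1$ of $P(w)$ with weight $1$ (resp.\ $0$), and the remaining simple zeros $\{\alpha_2,\ldots,\alpha_{n-1}\}$ with weight $k_1\geq 2$ (resp.\ $k_2\geq 1$). Because $\alpha_1$ is now shared, the factor $\tfrac{f-\alpha_1}{\mathcal{L}-\alpha_1}$ only contributes finitely many zeros and poles, so that $h=\tfrac{P(f)}{P(\mathcal{L})}$ genuinely takes the form $R_1 e^{\beta}$ with $R_1$ rational and $\beta$ entire. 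This is precisely the step that was invalid in the original Theorems B, D, H and is repaired by the restricted sharing hypothesis.

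First I would set $F=P(f)/(-b)$ and $G=P(\mathcal{L})/(-b)$ (or the analogous normalization used in \cite{Ban-Kundu_lmj}), so that $F$ and $G$ share $1$ with the appropriate weight inherited from the simple zeros, while $F-1$ and $G-1$ have zeros exactly at the $S$-points of $f$ and $\mathcal{L}$. The hypothesis that $f,\mathcal{L}$ share $(c,0)$ enters as an additional value-sharing condition that will be exploited in the second-main-theorem estimates. I would then apply the standard machinery: form the auxiliary function $H=\bigl(\tfrac{F''}{F'}-\tfrac{2F'}{F-1}\bigr)-\bigl(\tfrac{G''}{G'}-\tfrac{2G'}{G-1}\bigr)$ and analyze the two cases $H\equiv 0$ and $H\not\equiv 0$. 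In the case $H\not\equiv 0$, counting-function estimates combined with $\ol N(r,\infty;f)=O(\log r)=S(r,f)$ (the finitely-many-poles hypothesis) and the sharing of $c$ produce an inequality on $T(r,f)+T(r,\mathcal{L})$ that contradicts the lower bounds $n>2m+2$, $n>2k+2$, etc., for the respective subcases. This is where the precise numerical thresholds are forced: the weight $k_1\geq 2$ versus $k_2\geq 1$ distinction tunes the coefficients in the reduced counting functions, giving the $+2$ versus $+3$ (and $+4$ versus $+5$) gap in the bounds.

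The case $H\equiv 0$ leads, after integration, to a Möbius relation between $F$ and $G$, hence a relation $P(f)=\tfrac{aP(\mathcal{L})+b}{cP(\mathcal{L})+d}$; here one uses the structure of $P$ together with the shared value $c$ to rule out all possibilities except $F\equiv G$, i.e.\ $P(f)\equiv P(\mathcal{L})$. Then I would split according to whether $c=0$ (Part I), $c\in S'$ (Part II(A), which is exactly the case $l=n-1$ where $c$ coincides with a zero of $nz^{k}+ma$ and lowers the bound to $n>2k+2$), or $c\notin S'$ (Part II(B)). From $P(f)\equiv P(\mathcal{L})$ one deduces $f\equiv\mathcal{L}$ by the argument already used in \cite{Ban-Kundu_lmj}: the function $\phi=f-\mathcal{L}$ together with $\gcd(n,m)=1$ and the fact that $P(w)$ has at most one multiple zero forces $\phi\equiv 0$, the sharing of $c$ eliminating the exceptional factor-of-$e^{\beta}$ solutions.

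The main obstacle, as I see it, will be the bookkeeping in the case $H\not\equiv 0$ when $\alpha_1$ is shared only IM (the $(S,0)^*$, i.e.\ $k_2\geq 1$ subcases). There the multiple zero contributes to the counting functions with its full multiplicity on one side but reduced multiplicity on the other, and one must carefully separate the contribution of $\alpha_1$ from that of the simply-shared $\alpha_i$'s to recover a clean estimate; this is what costs the extra unit in the bound (e.g.\ $n>2m+3$ instead of $n>2m+2$). I expect the sharing of $c$ to be the decisive ingredient that closes the gap, since it supplies an additional deficient-value term $\ol N(r,c;f)=\ol N(r,c;\mathcal{L})$ that would otherwise be uncontrolled. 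Once these estimates are assembled, the contradiction with the stated inequalities on $n$ is routine and completes the proof of $f\equiv\mathcal{L}$ in every subcase.
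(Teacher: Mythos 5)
Your overall skeleton --- translating the restricted sharing into weighted value sharing of auxiliary functions $F$, $G$, running the $H$-function/second fundamental theorem machinery with the extra shared value $c$, and finishing the case $H\equiv 0$ through a M\"{o}bius relation between $F$ and $G$ --- is the same route the paper takes (the paper reduces Theorem \ref{t1.2} to Lemma \ref{l2.6} and the proofs of Theorems 1 and 3 of \cite{Ban-Kundu_lmj}). However, the claim you present as the ``crucial structural observation'' is false under the hypotheses of this theorem. You assert that because $\alpha_1$ is now shared, the factor $\frac{f-\alpha_1}{\mathcal{L}-\alpha_1}$ contributes only finitely many zeros and poles, so that $h=\frac{P(f)}{P(\mathcal{L})}$ genuinely has the form $R_1e^{\beta}$ with $R_1$ rational. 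That conclusion requires $P(f)$ and $P(\mathcal{L})$ to share $0$ \emph{CM}, and sharing $\alpha_1$ with weight $1$ (case (i)) or weight $0$ (case (ii)) does not give this: weighted sharing constrains multiplicities only up to the weight. For instance, under $(\alpha_1,1)$ a point where $f-\alpha_1$ has a double zero may be a triple zero of $\mathcal{L}-\alpha_1$; then $P(f)$ vanishes there to order $4$ and $P(\mathcal{L})$ to order $6$, so $h$ has a pole, and nothing prevents infinitely many such points. The $R_1e^{\beta}$ construction of Yuan--Li--Yi is rescued only by restricted \emph{CM} sharing, which is the hypothesis of Theorem \ref{t1.3}, not of Theorem \ref{t1.2}. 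Consequently your closing step, in which the sharing of $c$ is said to eliminate ``exceptional factor-of-$e^{\beta}$ solutions,'' refers to an object that does not exist in this proof.

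The repair that actually works, and is what the paper uses, is purely a weighted-sharing statement: $(S,1)^*=\{(\alpha_{1},1),(\{\alpha_2,\ldots,\alpha_{n-1}\},k_1)\}$ with $k_1\geq 2$ implies that $P(f)$ and $P(\mathcal{L})$ share $(0,2)$, equivalently that $F=-\frac{f^n}{af^m+b}$ and $G=-\frac{\mathcal{L}^n}{a\mathcal{L}^m+b}$ share $(1,2)$; and $(S,0)^*$ with $k_2\geq 1$ gives $(1,1)$. Indeed, a simple $\alpha_1$-point of $f$ is a simple $\alpha_1$-point of $\mathcal{L}$ and yields double zeros of both $P(f)$ and $P(\mathcal{L})$, a multiple $\alpha_1$-point yields zeros of order at least $4$ on both sides, and the simple zeros $\alpha_i$ are governed by their own weight; note in particular that the resulting weight is capped at $2$ no matter how large $k_1$ is, so it is not simply ``inherited from the simple zeros'' as you say. (A small slip in the same direction: with $F=P(f)/(-b)$ one has $F-1=-(P(f)+b)/b$, whose zeros are not the $S$-points of $f$; you want $F-1$ proportional to $P(f)$, e.g.\ $F=-(f^{n}+af^{m})/b$ or the paper's normalization above.) Once the hypothesis is recast this way, the $H\not\equiv 0$ analysis is exactly Lemma \ref{l2.5} or \ref{l2.6} with $s=1,2$, the $H\equiv 0$ M\"{o}bius case together with the shared value $c$ (split as $c=0$, $c\in S'$, $c\notin S'$) goes through as in \cite{Ban-Kundu_lmj}, and the passage from $P(f)\equiv P(\mathcal{L})$ to $f\equiv\mathcal{L}$ uses $\gcd(n,m)=1$ and the finiteness of the poles of $f$ and $\mathcal{L}$, with no exponential factor anywhere. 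With the false $e^{\beta}$ claim excised and replaced by this translation, your proposal coincides with the paper's proof.
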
	
	For $l=n-1$, the corrected form of {\it{Theorem I}} can be restated as follows. 
	\begin{theo}\label{t1.3}
		Under the same situation as in {\it{Theorem I}}, if $l=n-1$ and $f$, $\mathcal{L}$ share $(S,\infty)^*=\{(\alpha_{1},\infty),(\{\alpha_2,\ldots,\alpha_{n-1}\},\infty)\}$, %or\\(ii) $l=n$ and $f$, $\mathcal{L}$ share $(S,\infty)$, 
		then we will have $f\equiv \mathcal{L}$.
	\end{theo}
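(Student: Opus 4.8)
The decisive feature of the restricted hypothesis $(S,\infty)^*=\{(\alpha_1,\infty),(\{\alpha_2,\ldots,\alpha_{n-1}\},\infty)\}$ is that it repairs the multiplicity mismatch flagged in the introduction. Writing $P(f)=(f-\alpha_1)^2\prod_{i=2}^{n-1}(f-\alpha_i)$ and $P(\mathcal{L})=(\mathcal{L}-\alpha_1)^2\prod_{i=2}^{n-1}(\mathcal{L}-\alpha_i)$, I first observe that since $f$ and $\mathcal{L}$ share $(\alpha_1,\infty)$ as a value, every zero of $f-\alpha_1$ of multiplicity $p$ is a zero of $\mathcal{L}-\alpha_1$ of the same multiplicity and therefore contributes multiplicity $2p$ to each of $P(f)$ and $P(\mathcal{L})$; and since they share $(\{\alpha_2,\ldots,\alpha_{n-1}\},\infty)$ as a set, every zero of some $f-\alpha_i$ ($i\ge 2$) of multiplicity $p$ is a zero of some $\mathcal{L}-\alpha_j$ ($j\ge 2$) of multiplicity $p$ and contributes $p$ to each product. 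Hence, unlike under plain CM set sharing, here $P(f)$ and $P(\mathcal{L})$ genuinely share $(0,\infty)$. This is precisely the property that was tacitly (and wrongly) assumed in \cite{Kundu-Ban_palermo}, so under the restricted hypothesis the first source of error is removed.

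With $P(f)$ and $P(\mathcal{L})$ sharing $(0,\infty)$ established, I would reinstate the central auxiliary function of \emph{Theorem I}. Because $f$ and $\mathcal{L}$ have only finitely many poles, so do $P(f)$ and $P(\mathcal{L})$, and CM sharing of their zeros forces $h:=P(f)/P(\mathcal{L})$ to have only finitely many zeros and poles; thus $h=R_1 e^{\alpha}$ with $R_1$ rational and $\alpha$ entire, and finite-order considerations coming from $T(r,\mathcal{L})=O(r)$ force $\alpha$ to be a polynomial. In particular the estimates $N(r,h)=N(r,0;h)=O(\log r)$ and $T(r,h)=O(r)$, on which the original argument rests, now hold legitimately. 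From this point the proof of \emph{Theorem I} carries over: one forms the quantities $\hat{H}$ and $Q$ as in \cite{Kundu-Ban_palermo}, counts the zeros of $\hat{H}/Q-1$ through the second fundamental theorem, and uses $n>2m$ to exclude the subcase $\hat{H}/Q\not\equiv 1$.

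The genuinely new part is the subcase $\hat{H}/Q\equiv 1$, which was omitted in \cite{Kundu-Ban_palermo} and must be treated here. In this subcase the defining relation degenerates and removes the exponential ambiguity in $h=R_1 e^{\alpha}$, forcing $h$ to be rational. I would then invoke the shared value $(0,0)$: since $P(0)=b\ne 0$, at every common zero $z_0$ of $f$ and $\mathcal{L}$ one has $P(f)(z_0)=P(\mathcal{L})(z_0)=b$, so $h(z_0)=1$; as a nonconstant $L$-function $\mathcal{L}$ (and hence $f$) has infinitely many zeros, the rational function $h$ equals $1$ at infinitely many points and therefore $h\equiv 1$, i.e.\ $P(f)\equiv P(\mathcal{L})$. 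It remains to deduce $f\equiv\mathcal{L}$. Setting $H=f/\mathcal{L}$, the identity $P(f)\equiv P(\mathcal{L})$ rearranges to $\mathcal{L}^{\,n-m}(H^{n}-1)=a(1-H^{m})$. A constant $H\ne 1$ is impossible, since $\gcd(n,m)=1$ makes $H^{n}=1$ and $H^{m}=1$ jointly force $H=1$; and a nonconstant $H$ is impossible because then $\mathcal{L}^{\,n-m}=a(1-H^{m})/(H^{n}-1)$ would blow up wherever $H$ attains any of the $n-1$ roots of unity different from $1$, making $\mathcal{L}$ infinite there, whereas $\mathcal{L}$ has only finitely many poles and the second fundamental theorem forbids $H$ from taking these $n-1$ values only finitely often once $n\ge 4$. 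Thus $H\equiv 1$ and $f\equiv\mathcal{L}$.

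I expect the main obstacle to lie in this last subcase rather than in the reduction. Confirming that $\hat{H}/Q\equiv 1$ truly collapses $h$ to a rational function requires careful bookkeeping of the zeros of $f$ and $\mathcal{L}$ against the pole structure of the $L$-function, and the concluding implication $P(f)\equiv P(\mathcal{L})\Rightarrow f\equiv\mathcal{L}$ must combine $\gcd(n,m)=1$, the inequality $n>2m$, and the finiteness of the poles of $\mathcal{L}$ to run the second fundamental theorem; the low-degree configurations (where $n-1$ is too small for the deficiency count to bite) are exactly where one has to argue directly.
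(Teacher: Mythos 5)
Your overall architecture coincides with the paper's: the restricted hypothesis yields genuine CM sharing of the zeros of $P(f)$ and $P(\mathcal{L})$; one writes $P(f)/P(\mathcal{L})$ as a rational function times an exponential of a polynomial; the non-degenerate case is handed back to the machinery of \cite{Kundu-Ban_palermo}; and the genuinely new content is the degenerate case, which you reduce to the identity $P(f)\equiv P(\mathcal{L})$ by observing that the (infinitely many) common zeros of $f$ and $\mathcal{L}$ are $1$-points of the now-rational quotient. That reduction is sound, and in fact spells out what the paper only asserts in its closing Note; just record that the infinitude of zeros of a non-constant $L$-function comes from the standard zero-counting $N(T)\sim \frac{d_{\mathcal{L}}}{\pi}T\log T$.

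The genuine gap is in the last step, $P(f)\equiv P(\mathcal{L})\Rightarrow f\equiv\mathcal{L}$. Working with $H=f/\mathcal{L}$ and $\mathcal{L}^{n-m}(H^{n}-1)=a(1-H^{m})$, you control only the $n-1$ values of $H$ at the $n$-th roots of unity different from $1$, so your deficiency count needs $n-1\geq 3$, i.e.\ $n\geq 4$, and you explicitly leave the low-degree case open. But that case lies squarely within the theorem's scope: {\it Theorem I} requires only $n>2m$ with $\gcd(n,m)=1$, and for $(n,m)=(3,1)$ a multiple zero can indeed occur, e.g.\ $w^{3}-3w+2=(w-1)^{2}(w+2)$, giving $l=n-1=2$. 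So your proof, as written, does not prove the stated theorem. The paper closes exactly this hole by a device you never invoke: since $f$, $\mathcal{L}$ share $(0,0)$ and $P(f)\equiv P(\mathcal{L})$, comparing multiplicities in $f^{n}+af^{m}=\mathcal{L}^{n}+a\mathcal{L}^{m}$ at a common zero (where the order is $m$ times the zero's multiplicity) and at a common pole (where it is $n$ times) upgrades the sharing of $0$ and of $\infty$ to CM; hence $H$ is entire and zero-free, so $0$ and $\infty$ join the roots of unity as omitted values and a Picard-type count forces $H$ to be constant for every $n\geq 3$, whence $H\equiv 1$ by $\gcd(n,m)=1$. Two further imprecisions in your non-constant case: the second fundamental theorem does not by itself ``forbid'' $H$ from taking the $n-1$ values finitely often --- it only forces $H$ to be rational, after which you must still count the at least $n-2$ distinct finite preimages of those values; and for that count to yield a contradiction you need that $\mathcal{L}$, being in the Selberg class, has at most \emph{one} pole --- ``finitely many poles,'' which is all you invoke, is not enough to contradict a rational $H$.
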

	Next we turn our attention on some useful application of restricted sharing of sets.
	\par Let us consider the Frank-Reinders polynomial  \cite{Frank-Reinders_comp.var}:
	
	\bea\label{e1.3} P_{FR}(z)=\frac{(n-1)(n-2)}{2}z^{n}-n(n-2)z^{n-1}+\frac{n(n-1)}{2}z^{n-2}-c .\eea When $c\not=0,1$, then for $n\geq11$, it is well known from \cite{Frank-Reinders_comp.var} that the set
	$S=\{z:P_{FR}(z)=0\}$ is a URSM. Here, if we take $c=1$ then $z-1$ becomes a multiple factor of  $P_{FR}(z)$. Also we know from \cite{P.Li-C.C.Yang_kodai(95)} that the polynomial generating URSM must contain distinct zeros. In fact, till date to construct the traditional  unique range sets, when the representations of the same was based on the zeros of the generating polynomial, then it was always assumed that the generating polynomial contains simple zeros. So it will be interesting to investigate  for $c=1$ whether the set $S$ becomes a URSM or not. In this respect, next we will show that restricted sharing of sets become an important tool to tackle the situation. \par For the sake of convenience, let
	\bea\label{e1.4}\;\;\;\;\;\;\;\;\;\; P^1_{FR}(z)=\frac{(n-1)(n-2)}{2}z^{n}-n(n-2)z^{n-1}+\frac{n(n-1)}{2}z^{n-2}-1=(z-1)^3Q_{n-3}(z),\eea and $S_Q=\{z:Q_{n-3}(z)=0\}$, where $Q_{n-3}(z)$ has $n-3$ distinct zeros. When $P^{1}_{FR}(z)$ has a multiple zero then with respect to the traditional definition of set sharing of $S(=\{z:P^1_{FR}(z)=0\})$, no conclusion about the uniqueness of $f$ and $\mathcal{L}$ can be derived. But with the aid of the newly introduced definition of restricted set sharing on $S$ the uniqueness relation between $f$ and $\mathcal{L}$ can be obtained for the case $c=1$.
	\par So we have the following result.
	\begin{theo}\label{t1.4}
		Let $S=\{z:P^1_{FR}(z)=0\}$, $f$  be a meromorphic function having finitely many poles in $\mathbb{C}$  and let $\mathcal{L}$ be a non-constant $L$-function. If \\(i)  $f$ and $\mathcal{L}$ share $(S,1)^*=\{(1,1),(S_Q,3)\}$ and $n>7$; or if \\(ii)  $f$ and $\mathcal{L}$ share $(S,0)^*=\{(1,0),(S_Q,2)\}$ and $n>8,$ then we will get $f\equiv \mathcal{L}$.
	\end{theo}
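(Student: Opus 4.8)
The plan is to convert the restricted set-sharing hypothesis into an ordinary weighted value-sharing between the composites $F:=P^1_{FR}(f)$ and $G:=P^1_{FR}(\mathcal L)$, and then to run the standard auxiliary-function machinery. The decisive first step, and the very reason the restricted formulation is introduced, is to verify that $F$ and $G$ share $(0,s)$, where one checks $s=3$ in case (i) and $s=2$ in case (ii). Here the partition $S=\{1\}\cup S_Q$ does the work: since $1$ is shared as a \emph{value} (weight $1$, resp.\ $0$), a zero of $f-1$ can only meet a zero of $\mathcal L-1$, so the factor $(z-1)^3$ contributes equal multiplicities $3p$ to $F$ and to $G$; and since every $\beta\in S_Q$ is a \emph{simple} zero of $P^1_{FR}$, a zero of $f-\beta$ meeting a zero of $\mathcal L-\beta'$ still produces equal multiplicities in $F$ and $G$. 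This is precisely the implication that breaks down for ordinary sharing of the whole set $S$, as explained in the discussion preceding Theorem~\ref{t1.1}; hence this step is the conceptual heart of the argument.

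Next I would record the identity $(P^1_{FR})'(z)=\tfrac{n(n-1)(n-2)}{2}\,z^{n-3}(z-1)^2$, so that $F'=\tfrac{n(n-1)(n-2)}{2}f^{n-3}(f-1)^2f'$, and similarly for $G$. This factorisation encodes the strong ramification of $P^1_{FR}$ over its two critical values, namely over $0$ coming from $z=1$ and over $-1=P^1_{FR}(0)$ coming from $z=0$, and it is what will ultimately force the degree thresholds. Using that $f$ and $\mathcal L$ have only finitely many poles, so that $\ol N(r,\infty;f)=O(\log r)=S(r,f)$ and likewise for $\mathcal L$, every pole-counting term becomes a small function; combined with $T(r,F)=nT(r,f)+S(r,f)$ this is what permits $n$ as small as the stated bounds.

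The core is then the Lahiri-type auxiliary function $H=\bigl(\frac{F''}{F'}-\frac{2F'}{F}\bigr)-\bigl(\frac{G''}{G'}-\frac{2G'}{G}\bigr)$ attached to the shared value $0$. I would bound the counting functions of $F$ and $G$, using the $s$-sharing of the first step together with the ramification factors $f^{n-3}(f-1)^2$, and conclude that $H\not\equiv 0$ is incompatible with $n>7$ (resp.\ $n>8$); hence $H\equiv 0$. Integrating twice gives a relation $\frac1F=\frac{A}{G}+B$ with constants $A\ (\neq 0)$ and $B$. The subcase $B\neq 0$ forces $P^1_{FR}(f)=1/B$ to be attained only finitely often (because $G=P^1_{FR}(\mathcal L)$ has finitely many poles), which by the second main theorem applied to $f$, in conjunction with the ramification over $0$ and $-1$, is impossible for the given $n$; the subcase $B=0,\ A\neq 1$ is eliminated by comparing the constant terms in $P^1_{FR}(f)=A^{-1}P^1_{FR}(\mathcal L)$ through a further application of the second main theorem. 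The step I expect to demand the most care is this whole elimination together with the preceding proof of $H\equiv 0$: the tight thresholds survive only because the restricted weights on $S_Q$, the ramification factors, and the smallness of the pole terms are all exploited simultaneously, and because the special behaviour of the value $1$ for an $L$-function (recall the Hu--Li counterexample in \cite{Hu_LI_Can-16}) must be accommodated in the case analysis.

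Finally, the surviving case $A=1,\ B=0$ yields $F\equiv G$, that is $P^1_{FR}(f)\equiv P^1_{FR}(\mathcal L)$. Since $P^1_{FR}$ has critical points only at $0$ and $1$ with distinct critical values $-1$ and $0$, it is a uniqueness polynomial, a property that depends solely on the critical structure and not on whether the zeros are simple; consequently $P^1_{FR}(f)\equiv P^1_{FR}(\mathcal L)$ forces $f\equiv\mathcal L$. I would present the $B=0$ integration and, in particular, the case $\frac{\hat H}{Q}=1$ in the same manner as in the proof of Theorem~\ref{t1.3}, which is where the omission in \cite{Kundu-Ban_palermo} is repaired, thereby completing the argument.
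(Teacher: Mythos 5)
Your proposal is correct and takes essentially the same route as the paper's own proof: you convert the restricted sharing into weighted sharing of $P^1_{FR}(f)$ and $P^1_{FR}(\mathcal{L})$ with weight $3$ (resp.\ $2$), use the identical auxiliary function $H$ (the paper writes it for $\hat{F}=P^1_{FR}(f)+1$, $\hat{G}=P^1_{FR}(\mathcal{L})+1$ sharing $(1,3)$ resp.\ $(1,2)$), force $H\equiv 0$ by the second fundamental theorem together with the ramification factor $f^{n-3}(f-1)^2f'$ and the smallness of the pole terms, reduce the integrated relation $\frac{1}{\hat{F}-1}=\frac{A}{\hat{G}-1}+B$ to $A=1$, $B=0$ by SMT eliminations, and conclude via the Frank--Reinders uniqueness argument. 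The only cosmetic deviations are that you merge the paper's two subcases of $B\neq 0$ into one argument (through the finitely many $1/B$-points of $P^1_{FR}(f)$), you invoke the uniqueness-polynomial property of $P^1_{FR}$ abstractly where the paper reruns Frank--Reinders' computation, and your closing reference to the case $\hat{H}/Q=1$ belongs to the proof of Theorem \ref{t1.3}, not to this one.
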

	From equation (\ref{e1.4}) and {\it Theorem \ref{t1.4}} we can deduct the following corollaries:
	\begin{cor}
		Let $S=\{z:P^1_{FR}(z)=0\}$. If $f$ and $\mathcal{L}$ share $(S,1)^*=\{(1,1),(S_Q,s)\}$ where $s\geq 3$  and it contains $6$ elements, then we get $f\equiv \mathcal{L}$.
	\end{cor}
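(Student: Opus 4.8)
The plan is to deduce the corollary directly from \emph{Theorem \ref{t1.4}(i)} together with the weight-monotonicity of (restricted) set sharing, so the whole argument reduces to a careful specialization. First I would pin down the value of $n$ from the cardinality hypothesis. By (\ref{e1.4}) we have $P^1_{FR}(z)=(z-1)^3Q_{n-3}(z)$, where $Q_{n-3}$ has $n-3$ distinct zeros; since $z=1$ is a zero of $P^1_{FR}$ of multiplicity exactly $3$, the factor $(z-1)^3$ absorbs all the multiplicity at $1$, so $1\notin S_Q$ and $S=\{1\}\cup S_Q$ is a disjoint union with $|S|=1+(n-3)=n-2$. The assumption that $S$ contains $6$ elements therefore forces $n-2=6$, that is $n=8$, and in particular $n>7$, which is exactly the hypothesis required by \emph{Theorem \ref{t1.4}(i)}.

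Next I would reduce the weight on the $S_Q$-component to the value demanded by the theorem. Because $f$ and $\mathcal{L}$ share $(S_Q,s)$ with $s\geq 3$, the standard fact that sharing $(S_Q,s)$ implies sharing $(S_Q,p)$ for every $p\leq s$ shows that $f$ and $\mathcal{L}$ also share $(S_Q,3)$. Keeping the $(1,1)$ sharing of the multiple-zero component unchanged, this yields precisely the restricted sharing configuration $(S,1)^*=\{(1,1),(S_Q,3)\}$ appearing in \emph{Theorem \ref{t1.4}(i)}. Having secured both $n>7$ and this exact sharing configuration, I would simply invoke \emph{Theorem \ref{t1.4}(i)} to conclude $f\equiv \mathcal{L}$.

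There is no genuine obstacle beyond bookkeeping, since the corollary is an explicit specialization of \emph{Theorem \ref{t1.4}}. The only points requiring care are the verification that $1\notin S_Q$ (so that the count $|S|=n-2$ is correct and the determination $n=8$ is unambiguous), and the observation that lowering the weight on a single block of a restricted sharing is legitimate: restricted sharing is defined componentwise through ordinary weighted set sharing, for which the weight-monotonicity $(S_Q,s)\Rightarrow(S_Q,3)$ already holds, so the remaining block $(1,1)$ is untouched and the hypotheses of \emph{Theorem \ref{t1.4}(i)} are met verbatim.
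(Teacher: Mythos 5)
Your proposal is correct and matches the paper's intent exactly: the paper offers no separate proof, stating only that the corollaries are deduced from equation (\ref{e1.4}) and \emph{Theorem \ref{t1.4}}, which is precisely the specialization you carry out ($|S|=n-2=6$ forces $n=8>7$, weight monotonicity reduces $(S_Q,s)$ with $s\geq 3$ to $(S_Q,3)$, and \emph{Theorem \ref{t1.4}}(i) then applies). Your added care in checking that $1\notin S_Q$, via the multiplicity of the zero at $z=1$, is a worthwhile detail the paper leaves implicit.
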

	\begin{cor}
		If $f$ and $\mathcal{L}$ share $S=\{z:P^1_{FR}(z)=0\}$ restricted IM such that $(S,0)^*=\{(1,0),(S_Q,s)\}$ where $s\geq 2$, and it contains $7$ elements, then we get $f\equiv \mathcal{L}$.
	\end{cor}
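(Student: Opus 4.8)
The plan is to reduce the statement to the single identity $P^1_{FR}(f)\equiv P^1_{FR}(\mathcal{L})$ and then invoke the strong‑uniqueness‑polynomial property of the Frank--Reinders polynomial. Write $R(z)=\frac{(n-1)(n-2)}{2}z^{n}-n(n-2)z^{n-1}+\frac{n(n-1)}{2}z^{n-2}$, so that $P^1_{FR}(z)=R(z)-1$ and, crucially, $R'(z)=\frac{n(n-1)(n-2)}{2}z^{n-3}(z-1)^2$. Set $F=R(f)$ and $G=R(\mathcal{L})$, so that $F-1=(f-1)^3Q_{n-3}(f)$ and $G-1=(\mathcal{L}-1)^3Q_{n-3}(\mathcal{L})$. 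The first task is to read off, from the restricted sharing hypothesis, the precise sense in which $F$ and $G$ share the value $1$; this is exactly the situation the restricted definition was designed for. Sharing the simple‑zero part $S_Q$ with weight $3$ (resp.\ $2$) forces the $S_Q$‑contribution to the zeros of $F-1$ and $G-1$ to agree with that weight, while sharing the single value $1$ with weight $1$ (resp.\ $0$)---where $f-1$ and $\mathcal{L}-1$ enter $F-1$, $G-1$ with their multiplicities \emph{tripled}---guarantees that the triple‑zero locus contributes zeros of multiplicity a multiple of $3$ to both $F-1$ and $G-1$, matched up to the prescribed weight. Since $f$ has finitely many poles and an $L$-function has at most a single pole, both $\overline N(r,\infty;f)$ and $\overline N(r,\infty;\mathcal{L})$ are $O(\log r)=S(r)$; moreover $T(r,F)=nT(r,f)+S(r,f)$ and $T(r,G)=nT(r,\mathcal{L})+S(r,\mathcal{L})$.

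With this translation in hand I would introduce the standard auxiliary function
$$H=\left(\frac{F''}{F'}-\frac{2F'}{F-1}\right)-\left(\frac{G''}{G'}-\frac{2G'}{G-1}\right),$$
and argue, as is routine, that if $H\not\equiv 0$ then $m(r,H)=S(r)$ and every common simple $1$-point of $F$ and $G$ is a zero of $H$. Applying the Second Main Theorem to $F$ and to $G$ and feeding in the sharing data---exploiting both the high‑multiplicity zeros at $z=0$ coming from the factor $z^{n-2}$ in $R$ and the triple zeros at $1$, which sharply reduce the relevant reduced counting functions relative to the full ones---one derives an inequality of the form $c(n)\,T(r)\le S(r)$ whose coefficient $c(n)$ is positive exactly when $n>7$ in case (i) and $n>8$ in case (ii). This contradiction forces $H\equiv 0$.

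Integrating $H\equiv 0$ twice yields a Möbius relation
$$\frac{1}{F-1}=\frac{A}{G-1}+B,\qquad A\neq 0,$$
between $F$ and $G$. The heart of the argument---and the step I expect to be the main obstacle---is the case analysis eliminating all possibilities except $F\equiv G$. Here one uses the precise pole/zero balance: because $f$ and $\mathcal{L}$ have few poles, $F$ and $G$ have few poles, which severely restricts $A$ and $B$; and because $\mathcal{L}$ is a non‑constant $L$-function one can rule out the degenerate sub‑cases (such as $(F-1)(G-1)\equiv\text{const}$, or $F$ or $G$ omitting a value) by comparing the zero‑counting forced by the factorizations $(f-1)^3Q_{n-3}(f)$ and $(\mathcal{L}-1)^3Q_{n-3}(\mathcal{L})$ against the known growth and value‑distribution of an $L$-function. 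Once every branch but $B=0$, $A=1$ is excluded we obtain $F\equiv G$, that is, $R(f)\equiv R(\mathcal{L})$. Finally, since $R'(z)=\frac{n(n-1)(n-2)}{2}z^{n-3}(z-1)^2$ has its two critical points $0$ and $1$ with distinct critical values $R(0)=0\neq 1=R(1)$, the Frank--Reinders polynomial is a strong uniqueness polynomial for the ranges of $n$ considered, so $R(f)\equiv R(\mathcal{L})$ gives $f\equiv\mathcal{L}$, completing the proof.
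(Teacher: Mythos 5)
Your proposal is correct and takes essentially the same route as the paper: the paper deduces this corollary directly from its Theorem 1.4(ii) (the 7-element hypothesis forcing $n-2=7$, i.e.\ $n=9>8$), and the proof of that theorem is precisely your argument --- translating the restricted sharing $\{(1,0),(S_Q,2)\}$ into $\hat{F}=R(f)$, $\hat{G}=R(\mathcal{L})$ sharing $(1,2)$, deriving a contradiction for $n>8$ from the auxiliary function $H$ and the Second Fundamental Theorem, integrating $H\equiv 0$ to a M\"{o}bius relation whose degenerate branches are eliminated by the pole/zero balance, and concluding $f\equiv\mathcal{L}$ from $R(f)\equiv R(\mathcal{L})$ via the Frank--Reinders uniqueness-polynomial argument. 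The only step you leave implicit is the trivial count that $S$ having $7$ elements means $n=9$, which is exactly what puts you in your case (ii).
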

	In the next section, for some meromorphic functions $f$ and $g$, we are going to use some known counting functions like $N_{E}^{1)}(r,1;f\vline=1)$, $\ol N(r,1;f\mid=1)$, $N_{E}^{1)}(r,1;f\vline\geq s)\;(s\geq 1)$, $\ol N_*(r,1;f,g)$, $\ol N(r,a;f\mid\geq s;g\not=b)$ and $\ol N(r,0;f'\mid\geq s; f=b)$ without explanations as those are very common (see \cite{Ban-Kundu_lmj}).
	%Though here the minimum degree of $P^1_{FR}(z)$ is $8$ but the $\min\{\#(S)\}=6$, where $\#(S)$ the cardinality of $S$. 
	\section{lemmas}
	Now we need the following lemmas to proceed further.
	Let $F$ and $G$ be two non-constant meromorphic functions defined in $\mathbb{C}$ as follows
	\bea\label{e2.1}\;\;\;\;\;\;\;F=\frac{(n-1)(n-2)f^{n-2}(f^2-\frac{2n}{n-1}f+\frac{n}{n-2})}{2c}\;,\;\;G=\frac{(n-1)(n-2)g^{n-2}(g^2-\frac{2n}{n-1}g+\frac{n}{n-2})}{2c},\eea and in ({\ref{e2.1}}), when $c=1$, we will take $F=F_1$, $G=G_1$.\par
	Henceforth we shall denote by $H$ the following function. \be{\label{e2.2}}H=\left(\frac{F^{''}}{F^{'}}-\frac{2F^{'}}{F-1}\right)-\left(\frac{G^{''}}{G^{'}}-\frac{2G^{'}}{G-1}\right).\ee  
	
	\begin{lem}\label{l2.1}\cite{Yi-Lu} Let $F$, $G$ be two non-constant meromorphic functions sharing $(1,0)$ and $H\not\equiv 0$. Then $$N_{E}^{1)}(r,1;F\vline=1)=N_{E}^{1)}(r,1;G\vline=1)\leq N(r,\infty;H)+S(r,F)+S(r,G).$$\end{lem}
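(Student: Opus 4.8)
The plan is to extract the behaviour of $H$ at a point that is simultaneously a simple $1$-point of $F$ and of $G$, and then to turn that pointwise vanishing into a global counting estimate by combining the First Main Theorem with the lemma on the logarithmic derivative.

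First I would dispose of the equality $N_{E}^{1)}(r,1;F\mid=1)=N_{E}^{1)}(r,1;G\mid=1)$, which is immediate from the hypotheses. Since $F$ and $G$ share $(1,0)$, every $1$-point of $F$ is a $1$-point of $G$ and conversely; the subscript $E$ together with the restriction $=1$ isolates exactly those points that are simple $1$-points of both functions. This set is manifestly symmetric in $F$ and $G$, so the two reduced counting functions count the same divisor and hence coincide.

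The core of the argument is a local expansion of $H$. Let $z_0$ be a common simple $1$-point, so that near $z_0$ one may write $F-1=a_1(z-z_0)+a_2(z-z_0)^2+\cdots$ and $G-1=b_1(z-z_0)+b_2(z-z_0)^2+\cdots$ with $a_1,b_1\neq 0$. Carrying the Taylor coefficients far enough, a direct computation gives
\[
\frac{F''}{F'}-\frac{2F'}{F-1}=-\frac{2}{z-z_0}+O(z-z_0),
\]
with \emph{vanishing} constant term, and the identical expansion holds for the $G$-part. Subtracting, both the principal parts $-2/(z-z_0)$ and the constant terms cancel, so $H$ is holomorphic at $z_0$ with $H(z_0)=0$. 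Consequently, provided $H\not\equiv 0$, every common simple $1$-point is a zero of $H$, and since each is counted once we obtain
\[
N_{E}^{1)}(r,1;F\mid=1)\leq N(r,0;H).
\]

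Finally I would globalize. By the First Main Theorem, $N(r,0;H)\leq T(r,H)+O(1)=m(r,H)+N(r,\infty;H)+O(1)$. Because $H$ is a sum of logarithmic derivatives of $F'$, $F-1$, $G'$ and $G-1$, the lemma on the logarithmic derivative (together with $S(r,F')=S(r,F)$ and $S(r,G')=S(r,G)$) yields $m(r,H)=S(r,F)+S(r,G)$, and combining these estimates gives the asserted bound. The one delicate point — the step I expect to be the main obstacle — is the local expansion: one must retain enough Taylor terms to see that not only the simple pole but also the constant term cancels in $H$, since it is precisely this double cancellation that certifies $z_0$ as a genuine zero of $H$ rather than a generic regular point, and hence underwrites the inequality $N_{E}^{1)}(r,1;F\mid=1)\leq N(r,0;H)$.
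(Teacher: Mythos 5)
Your proof is correct: the local expansion at a common simple $1$-point $z_0$ does kill both the principal part $-2/(z-z_0)$ and the constant term $2a_2/a_1$ in each bracket of $H$, so $z_0$ is a genuine zero of $H$, and the globalization via $N(r,0;H)\leq T(r,H)+O(1)$ together with the lemma on the logarithmic derivative (giving $m(r,H)=S(r,F)+S(r,G)$) is exactly the right way to finish. Note that the paper itself gives no proof of this statement --- Lemma \ref{l2.1} is quoted from Yi--Lu \cite{Yi-Lu} --- and your argument is precisely the classical proof of that cited result, so there is nothing to compare beyond confirming its correctness.
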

	\begin{lem}\label{l2.2}
		Let $f$, $g$ share $S=\{z:P_{FR}(z)=0\}$ IM, i.e.,  $F$, $G$ share $(1,0)$ with $H\not\equiv0$, then 
		\beas (i)\; N(r,\infty;H)&\leq&\ol N(r,0;f)+\ol N(r,\infty;f)+\ol N(r,1;f)\nonumber+\ol N(r,0;g)+\ol N(r,\infty;g)\\ &&+\ol N(r,1;g)+\ol N_{*}(r,1;F,G)+\ol N_{0}(r,0;f^{'})+\ol N_{0}(r,0;g^{'})\eeas and if $f$, $g$ share $S=\{z:P^1_{FR}(z)=0\}$ restricted IM such that $(S,0)^*=\{(1,0), (S_Q,0)\}$, i.e., $F_1$, $G_1$ share $(1,0)$ and $H\not\equiv 0$ then	\beas  (ii)\;N(r,\infty;H)&\leq&\ol N(r,0;f)+\ol N(r,\infty;f)+\ol N(r,0;g)+\ol N(r,\infty;g)+\ol N_{*}(r,1;F_1,G_1)\\ &&+\ol N_{1}(r,0;f^{'})+\ol N_{1}(r,0;g^{'}),\eeas 
		where $\ol N_{0}(r,0;f^{'})\;(\ol N_{1}(r,0;f^{'}))$ is the reduced counting function of those zeros of $f^{'}$ which are not the zeros of $f(f-1)(F-1)\;(f(F_1-1))$ and $\ol N_{0}(r,0;g^{'})\;(\ol N_{1}(r,0;g^{'}))$ is similarly defined.\end{lem}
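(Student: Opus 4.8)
The plan is to exploit the fact that $H$ is assembled entirely from logarithmic derivatives, so that each of the four terms $\frac{F''}{F'}$, $\frac{2F'}{F-1}$, $\frac{G''}{G'}$, $\frac{2G'}{G-1}$ has at worst simple poles; consequently $H$ itself has only simple poles and $N(r,\infty;H)=\ol N(r,\infty;H)$, which reduces the whole estimate to locating the poles of $H$ and reading off their residues. First I would record the two computational facts that drive everything: since $P_{FR}'(z)=\frac{n(n-1)(n-2)}{2}z^{n-3}(z-1)^2$ and $F-1=\frac{1}{c}P_{FR}(f)$, differentiation gives $F'=\frac{n(n-1)(n-2)}{2c}f^{n-3}(f-1)^2f'$, and likewise for $G$. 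Reading off the zeros and poles of $F'$ and of $F-1$ then shows that the only possible poles of $H$ sit over the zeros of $f$, the $1$-points of $f$, the zeros of $f'$, the poles of $f$, and the points with $f\in S$, together with the symmetric loci for $g$.

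Next I would compute, point by point, the contribution of $\frac{F''}{F'}-\frac{2F'}{F-1}$. A routine residue count gives the value $-(p+1)$ at a $1$-point of $F$ of multiplicity $p$, the value $P-1$ at a pole of $F$ of order $P$, and, at any zero of $F'$ that is not a $1$-point of $F$, a simple pole whose residue equals the order of that zero (here $\frac{2F'}{F-1}$ stays regular because $F\neq1$). Subtracting the $G$-part, a common $1$-point of $F$ and $G$ of multiplicities $p$ and $q$ contributes residue $q-p$ to $H$; since $F,G$ share $(1,0)$ every $1$-point is common, so such a point produces no pole exactly when $p=q$ and is otherwise absorbed into $\ol N_*(r,1;F,G)$. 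The zeros of $f$ (where $F=0$, hence $F-1\neq0$) and the poles of $f$ then contribute at most $\ol N(r,0;f)$ and $\ol N(r,\infty;f)$, the remaining critical points --- those zeros of $f'$ at which $f(f-1)(F-1)\neq0$ --- contribute $\ol N_0(r,0;f')$, and the symmetric terms come from $g$; adding up these reduced counts yields bound $(i)$.

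The one genuinely delicate point, and the place where the two statements part ways, is the image of a $1$-point of $f$. From $P_{FR}(1)=1-c$ one gets $F=1/c$ there, so for $c\neq1$ it is an ordinary zero of $F'$ with $F\neq1$ and must be counted on its own --- this is exactly why $\ol N(r,1;f)+\ol N(r,1;g)$ appears in $(i)$ and why the locus excluded from $\ol N_0$ is $f(f-1)(F-1)$. In part $(ii)$ we have $c=1$, so $f=1$ forces $F_1=1$; now every $1$-point of $f$ is itself a $1$-point of $F_1$, shared with $G_1$, and is therefore swept into the common-$1$-point analysis and into $\ol N_*(r,1;F_1,G_1)$ rather than a separate term, which simultaneously explains why the excluded locus shrinks to $f(F_1-1)$ and $\ol N_1$ replaces $\ol N_0$. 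I expect the residue cancellation at the shared $1$-points to be the real heart of the argument, since it is precisely this cancellation --- valid only when the two multiplicities agree --- that keeps the full $\ol N(r,1;F)$ out of the estimate and replaces it by $\ol N_*$; once that is in hand, everything else is bookkeeping of reduced counts over the zero set of $F'$ and over the poles of $F$ and $G$.
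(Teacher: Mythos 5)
Your proposal is correct and follows essentially the same route as the paper: both rest on the factorization $F'=\frac{n(n-1)(n-2)}{2c}f^{n-3}(f-1)^2f'$, a pole-by-pole (residue) analysis of $H$ at zeros of $F'$, $1$-points of $F$, and poles of $F$, and the key observation that $f=1$ gives $F=1/c$, so for $c\neq 1$ the $1$-points of $f,g$ must be counted separately while for $c=1$ they become shared $1$-points of $F_1,G_1$ absorbed into $\ol N_{*}(r,1;F_1,G_1)$. The paper merely sketches this ("by simple calculations") plus the one-sentence remark on differing multiplicities of $1$-points; your write-up supplies exactly the details that sketch suppresses.
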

	\begin{proof}
		Clearly from the given condition, \beas F'=\frac{n(n-1)(n-2)f^{n-3}(f-1)^2f^{'}}{2c}\;\;\text{ and}\;\;G'=\frac{n(n-1)(n-2)g^{n-3}(g-1)^2g^{'}}{2c}.\eeas By simple calculations proof of (i) can be carried out easily.\par   From (\ref{e1.4}) we know $F_1-1=(f-1)^3Q_{n-3}(f)$ and $G_1-1=(g-1)^3Q_{n-3}(g)$. Since here $f$ and $g$ share $(1,0)$ the $1$ points of $f$ and $g$ with different multiplicities contribute towards the poles of $H$ and will be accommodated within $\ol N_*(r,1;F_1,G_1)$.
		Hence we can get the proof of (ii) easily.
	\end{proof}
	\begin{lem}\label{l2.3}\cite{Mokhon'ko-lem} Let $f$ be a non-constant meromorphic function and let \[R(f)=\frac{\sum\limits _{k=0}^{n} a_{k}f^{k}}{\sum \limits_{j=0}^{m} b_{j}f^{j}}\] be an irreducible rational function in $f$ with constant coefficients $\{a_{k}\}$ and $\{b_{j}\}$, where $a_{n}\not=0$ and $b_{m}\not=0$. Then $$T(r,R(f))=dT(r,f)+S(r,f),$$ where $d=\max\{n,m\}$.\end{lem}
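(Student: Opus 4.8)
This is the classical Valiron--Mokhon'ko identity, so the plan is to bound $T(r,R(f))$ both above and below by $d\,T(r,f)$ up to a bounded term, which is stronger than the stated $S(r,f)$ error. First I would normalise the situation: since the first fundamental theorem gives $T(r,R(f))=T(r,1/R(f))+O(1)$ and $1/R=Q/P$ has the same maximal degree $d=\max\{n,m\}$, I may assume without loss of generality that $n\ge m$, so that $d=n$ and $R(w)$ behaves like $(a_n/b_m)w^{\,n-m}$ as $w\to\infty$. Throughout I would write $Q(w)=b_m\prod_j(w-c_j)^{\mu_j}$ with $\sum_j\mu_j=m$ and use the irreducibility hypothesis in the form $\gcd(P,Q)=1$, so that $P(c_j)\ne0$ for every root $c_j$ of $Q$.

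For the upper bound I would split $T(r,R(f))=m(r,R(f))+N(r,R(f))$. The poles of $R(f)$ arise only from the $c_j$-points of $f$ (where $R(f)$ has a pole of order $\mu_j$ times the local multiplicity, since $P(c_j)\ne0$) and, when $n>m$, from the poles of $f$ (each of order $p$ producing a pole of order $(n-m)p$); this yields $N(r,R(f))=\sum_j\mu_j N(r,c_j;f)+(n-m)^{+}N(r,\infty;f)+O(1)$. From the growth estimates $|R(w)|=O(|w|^{\,n-m})$ near $\infty$ and $|R(w)|=O(|w-c_j|^{-\mu_j})$ near each $c_j$, I would likewise get $m(r,R(f))\le\sum_j\mu_j\,m\!\left(r,\tfrac1{f-c_j}\right)+(n-m)^{+}m(r,f)+O(1)$. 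Adding the two, grouping the terms attached to each $c_j$ and to $\infty$, and applying the first fundamental theorem in the form $m\!\left(r,\tfrac1{f-c_j}\right)+N(r,c_j;f)=T(r,f)+O(1)$ collapses the estimate to $T(r,R(f))\le\big[(n-m)+\sum_j\mu_j\big]T(r,f)+O(1)=n\,T(r,f)+O(1)=d\,T(r,f)+O(1)$.

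For the lower bound I would fix a value $a$ avoiding the finitely many critical values of $R$, so that $P(w)-aQ(w)=c\prod_{i=1}^{d}(w-w_i)$ has $d$ \emph{distinct} simple roots $w_1,\dots,w_d$, none of which is a zero of $Q$ (again by coprimality). A partial fraction expansion then gives $\dfrac{1}{R(f)-a}=c_0+\sum_{i=1}^{d}\dfrac{A_i}{f-w_i}$ with every $A_i=Q(w_i)/\big(c\prod_{k\ne i}(w_i-w_k)\big)\ne0$. Because the $w_i$ are distinct, the poles of this sum do not cancel, so $N\!\left(r,\tfrac1{R(f)-a}\right)=\sum_i N(r,w_i;f)+O(1)$; and since for large $r$ a point of $|z|=r$ can be close to at most one $w_i$, in which region the sum is dominated by the single term $A_i/(f-w_i)$, a short pointwise comparison yields $m\!\left(r,\tfrac1{R(f)-a}\right)\ge\sum_i m\!\left(r,\tfrac1{f-w_i}\right)-O(1)$. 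Adding these, applying the first fundamental theorem to each $w_i$, and using $T(r,R(f))=T\!\left(r,\tfrac1{R(f)-a}\right)+O(1)$ gives $T(r,R(f))\ge d\,T(r,f)-O(1)$.

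Combining the two estimates yields $T(r,R(f))=d\,T(r,f)+O(1)$, which is sharper than and hence implies the stated $S(r,f)$ form. I expect the genuine obstacle to be the lower bound, and within it the proximity inequality $m\!\left(r,\tfrac1{R(f)-a}\right)\ge\sum_i m\!\left(r,\tfrac1{f-w_i}\right)-O(1)$: the whole point is to recover the \emph{full} factor $d$ even when $f$ omits some of the values $w_i$, and this succeeds only because the $w_i$ are mutually distinct (forcing the proximity regions to be disjoint) and because $a$ was chosen to avoid the critical values of $R$. The upper bound, by contrast, is routine once the coprimality of $P$ and $Q$ is used to pin down the exact multiplicities of the poles of $R(f)$.
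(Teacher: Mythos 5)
The paper does not actually prove this lemma: it is imported verbatim from Mokhon'ko's paper (reference \cite{Mokhon'ko-lem}) and used as a black box, so there is no in-paper argument to compare yours against. Judged on its own, your proof is the standard classical route to the Valiron--Mokhon'ko identity and is essentially correct: the upper bound comes from the decomposition $T=m+N$, exact pole bookkeeping at the $c_j$-points of $f$ and at the poles of $f$ (coprimality guaranteeing $P(c_j)\neq 0$), and the first fundamental theorem applied at each $c_j$ and at $\infty$; the lower bound comes from a generic value $a$, the partial-fraction expansion with all $A_i\neq 0$, and the disjoint-neighbourhood estimate $m\left(r,\tfrac{1}{R(f)-a}\right)\geq \sum_i m\left(r,\tfrac{1}{f-w_i}\right)-O(1)$. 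You correctly identify this last estimate as the crux, and your argument for it (only one term of the sum can blow up at a given point, and it then dominates the bounded remainder) is the right one. Note also that your conclusion $T(r,R(f))=d\,T(r,f)+O(1)$ is sharper than the stated $S(r,f)$ form, as expected for constant coefficients. Two small points of precision: (i) when $n=m$, the value $a$ must additionally avoid $R(\infty)=a_n/b_m$, since otherwise $P-aQ$ drops degree and $R(w)=a$ has only $d-1$ finite roots; this costs nothing (one more excluded value) but should be stated, as it is not automatically subsumed under ``critical values'' of $R$ unless those are taken on the Riemann sphere; (ii) the phrase ``for large $r$'' in the proximity step is a red herring --- the disjointness you need is that of the $\delta$-neighbourhoods of the distinct $w_i$ in the $w$-plane, which is uniform in $z$ and has nothing to do with the radius $r$.
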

	\begin{lem}\label{l2.4}
		Let $f$ be a meromorphic function having finitely many poles in $\mathbb{C}$ and $S=\{z:P_{FR}(z)=0\}\;(or\;S=\{z:P^1_{FR}(z)=0\})$. If $f$ and a non constant $L$-function $\mathcal{L}$ share the set $S$ IM (or restricted IM), then $\rho(f) =\rho(\mathcal{L}) = 1.$
	\end{lem}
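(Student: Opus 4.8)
The plan is to reduce everything to an order comparison between $f$ and $\mathcal{L}$ via the second fundamental theorem, using the known order of $L$-functions as the anchor. First I would invoke the standard fact (see [p.~150]\cite{Steuding-Sprin-07}) that every $L$-function in the Selberg class satisfies $T(r,\mathcal{L})=\frac{d_{\mathcal{L}}}{\pi}\,r\log r+O(r)$, so that $\rho(\mathcal{L})=1$ is automatic and the entire content of the lemma is to prove $\rho(f)=1$. Writing $g=\mathcal{L}$, I would record the algebraic identity obtained by expanding (\ref{e2.1}), namely $c(F-1)=P_{FR}(f)$ and $c(G-1)=P_{FR}(\mathcal{L})$ in the first case (respectively $F_1-1=P^1_{FR}(f)$ and $G_1-1=P^1_{FR}(\mathcal{L})$ when $c=1$). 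Consequently, $f$ and $\mathcal{L}$ sharing $S$ IM (or restricted IM) forces $F,G$ to share $(1,0)$, whence
$$\ol N(r,0;P_{FR}(f))=\ol N(r,1;F)=\ol N(r,1;G)=\ol N(r,0;P_{FR}(\mathcal{L})).$$

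Next I would apply the second fundamental theorem to $f$ against the $q$ \emph{distinct} zeros $\beta_1,\dots,\beta_q$ of the generating polynomial, where $q=n$ for $P_{FR}$ and $q=n-2$ for $P^1_{FR}=(z-1)^3Q_{n-3}$. Since each zero of $P_{FR}(f)$ is a $\beta_j$-point of $f$ for exactly one $j$, we have $\sum_{j=1}^{q}\ol N(r,\beta_j;f)=\ol N(r,0;P_{FR}(f))=\ol N(r,1;G)$, and by the first fundamental theorem together with Lemma \ref{l2.3} (applied to the degree-$n$ polynomial $G=1+\frac1c P_{FR}(\mathcal{L})$) one gets $\ol N(r,1;G)\le T(r,G)+O(1)=n\,T(r,\mathcal{L})+S(r,\mathcal{L})$. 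Using $\ol N(r,\infty;f)=O(\log r)$ (finitely many poles), this yields
$$(q-2)\,T(r,f)\le n\,T(r,\mathcal{L})+S(r,f)+O(\log r).$$
The \emph{main obstacle} is precisely here: before establishing finite order we only know $S(r,f)=O(\log(rT(r,f)))$, so the $S(r,f)$ term cannot be discarded naively. However, since $q-2\ge 1$ for the relevant values of $n$, the standard device of absorbing $O(\log(rT(r,f)))$ (which is $o(T(r,f))$ off a set of finite Lebesgue measure) gives $T(r,f)=O(T(r,\mathcal{L}))=O(r\log r)$, so $\rho(f)\le 1$; in particular $S(r,f)=O(\log r)$ from this point on.

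Finally I would reverse the roles. Applying the second fundamental theorem to $\mathcal{L}$ and using $\ol N(r,1;G)=\ol N(r,1;F)\le T(r,F)+O(1)=n\,T(r,f)+S(r,f)$, together with $\ol N(r,\infty;\mathcal{L})=O(\log r)$ and the now-available $S(r,f)=O(\log r)$, I obtain $(q-2)\,T(r,\mathcal{L})\le n\,T(r,f)+O(\log r)$. Because $T(r,\mathcal{L})\sim\frac{d_{\mathcal{L}}}{\pi}r\log r$, this forces $T(r,f)\gtrsim r\log r$, whence $\rho(f)\ge 1$. Combining the two estimates gives $\rho(f)=1=\rho(\mathcal{L})$, which is the assertion. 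The argument is uniform across the two cases, the only difference being the value of $q$, and the hypotheses on $n$ in Theorem \ref{t1.4} guarantee $q\ge 3$ throughout.
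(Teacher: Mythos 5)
Your proof is correct and takes essentially the same approach as the paper's: the paper omits the proof entirely, deferring to the argument in Theorem 5 of Yuan--Li--Yi \cite{Yan-Li-Yi_Lith}, which is precisely this combination of Steuding's growth formula $T(r,\mathcal{L})=\frac{d_{\mathcal{L}}}{\pi}r\log r+O(r)$ with a two-sided second-fundamental-theorem comparison of $T(r,f)$ and $T(r,\mathcal{L})$ through the shared preimages of $S$. The only difference is that you spell the argument out for the Frank--Reinders polynomials (including the correct handling of the $S(r,f)$ term and the exceptional set before finite order is known), which the paper leaves to the cited reference.
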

	
	\begin{proof}
		We omit the proof as the same can be found out in the proof of Theorem 5, \{p. 6, \cite{Yan-Li-Yi_Lith}\}.
	\end{proof}
	Next we omit the proofs of {\it{Lemma \ref{l2.5}, \ref{l2.6}}}, stated below, as the same can be obtained in the line of proof of {\it{Lemma 6, 7}} in \cite{Ban-Kundu_lmj}.
	\begin{lem}\label{l2.5}
		Let $F=-\frac{f^{n}}{af^{m}+b}$ and $G=-\frac{g^{n}}{ag^{m}+b}$, where $f$ and $g$ be any two non-constant meromorphic functions, %of finite order, having finitely many poles in $\mathbb{C}$
		$n$, $m$ be relatively prime positive integers, such that $n > m \geq 1$ and $a$, $b$ be non-zero finite constants. Let $\gamma_{i}$, $i=1,2,\ldots,m$ be the roots of $aw^{m}+b=0$. If $H\not\equiv 0$ and $F$, $G$ share $(1,s)$ then, 
		\beas\frac{n}{2}\big(T(r,f)+T(r,g)\big)&\leq& 2\big(\ol N(r,0;f)+\ol N(r,0;g)+\ol N(r,\infty;f)+\ol N(r,\infty;g)\big)+\sum_{i=1}^{m}\big(N_{2}(r,\gamma_{i};f)\\&&+N_{2}(r,\gamma_{i};g)\big)+\left(\frac{3}{2}-s\right)\ol N_{*}(r,1;F,G)+S(r,f)+S(r,g).\eeas
	\end{lem}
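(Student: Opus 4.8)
The plan is to run Nevanlinna's second main theorem on $F$ and $G$ against the three targets $0$, $1$, $\infty$ and then pull everything back to $f$ and $g$. First I would record, via the Mokhon'ko lemma (Lemma \ref{l2.3}), that since the numerator degree $n$ exceeds the denominator degree $m$ we have $T(r,F)=nT(r,f)+S(r,f)$ and $T(r,G)=nT(r,g)+S(r,g)$; this is what ultimately produces the factor $n$ on the left. Next I would identify the relevant pre-images: the zeros of $F$ are exactly the zeros of $f$, each of multiplicity $n\ge 2$, while the poles of $F$ come either from the poles of $f$ (of multiplicity $n-m$) or from the $\gamma_i$-points of $f$, i.e.\ the solutions of $af^m+b=0$. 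In truncated form this gives $N_2(r,0;F)=2\ol N(r,0;f)$ (because $n\ge 2$) and $N_2(r,\infty;F)\le 2\ol N(r,\infty;f)+\sum_{i=1}^m N_2(r,\gamma_i;f)$, and symmetrically for $G$; these are precisely the terms appearing on the right-hand side of the claim.

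With these dictionaries in hand, the core is to establish the single-sided symmetric inequality $T(r,F)\le N_2(r,0;F)+N_2(r,\infty;F)+N_2(r,0;G)+N_2(r,\infty;G)+\left(\frac{3}{2}-s\right)\ol N_*(r,1;F,G)+S(r,F)+S(r,G)$, which runs exactly as in Lemmas 6 and 7 of \cite{Ban-Kundu_lmj}. I would apply the sharpened second main theorem to $F$, keeping the ramification term $-N_0(r,0;F')$ in reserve, and the only delicate contribution is that of the $1$-points. Here I would invoke the hypothesis that $F$, $G$ share $(1,s)$: the $1$-points of multiplicity at most $s$ are common to $F$ and $G$ with equal multiplicity, so all discrepancies are confined to $\ol N_*(r,1;F,G)$. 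The common \emph{simple} $1$-points are controlled by Lemma \ref{l2.1}, namely $N_{E}^{1)}(r,1;F\vline=1)\le N(r,\infty;H)+S$, and $N(r,\infty;H)$ is estimated exactly as in Lemma \ref{l2.2} but for the present $F$, $G$: since $H\not\equiv 0$ and $m(r,H)=S$, the poles of $H$ can sit only over the multiple zeros and poles of $F$ and $G$ (already absorbed into the $N_2$ terms) and over the $1$-points where $F$ and $G$ have unequal multiplicity (absorbed into $\ol N_*$), while at a common simple $1$-point the two summands $\frac{2F'}{F-1}$ and $\frac{2G'}{G-1}$ cancel so that $H$ is holomorphic there.

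Assembling these pieces, the aggregate $\ol N(r,1;F)$ splits into a common-simple part, charged against $N(r,\infty;H)$ through Lemma \ref{l2.1}, and a remaining part over $1$-points of higher multiplicity, which the weight $s$ lets me trim down to multiplicity $s+1$; it is precisely this weighted trimming, together with the holomorphy of $H$ over simple common $1$-points, that turns the crude coefficient of $\ol N_*(r,1;F,G)$ into the sharp value $\frac{3}{2}-s$. Writing the analogous inequality with the roles of $F$ and $G$ interchanged (the right-hand side being symmetric), adding the two and dividing by $2$, the left side becomes $\frac12\big(T(r,F)+T(r,G)\big)=\frac{n}{2}\big(T(r,f)+T(r,g)\big)+S$, and substituting the pre-image translations $N_2(r,0;F)=2\ol N(r,0;f)$ and $N_2(r,\infty;F)\le 2\ol N(r,\infty;f)+\sum_i N_2(r,\gamma_i;f)$ (and likewise for $G$) collapses the right side into the asserted bound.

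I expect the bookkeeping of the $1$-point counting functions under weighted sharing to be the main obstacle: one must track exactly how the $1$-points distribute between the common-simple count governed by $H$ and the mismatch count $\ol N_*$, and how the weight $s$ trims the latter, so that the final coefficient emerges as $\frac{3}{2}-s$ rather than a weaker constant. A subsidiary but essential check is the verification that $H$ has no pole at any common simple $1$-point and that its entire pole divisor is dominated by the stated reduced counting functions; this is where the hypothesis $H\not\equiv 0$ is used, since it guarantees the estimates of Lemmas \ref{l2.1} and \ref{l2.2} are non-vacuous. As the set-up and these sub-estimates proceed verbatim as in the proof of Lemmas 6 and 7 of \cite{Ban-Kundu_lmj}, no new idea beyond careful accounting is required.
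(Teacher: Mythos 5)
Your overall architecture --- Mokhon'ko's lemma giving $T(r,F)=nT(r,f)+S(r,f)$, the pre-image dictionaries $N_2(r,0;F)=2\ol N(r,0;f)$ and $N_2(r,\infty;F)\le 2\ol N(r,\infty;f)+\sum_{i=1}^{m}N_2(r,\gamma_i;f)$, the second main theorem combined with Lemma \ref{l2.1} and a pole estimate for $H$, followed by symmetrization --- is exactly the machinery behind the proof the paper points to (Lemmas 6--7 of \cite{Ban-Kundu_lmj}; the paper itself omits the proof). However, the step you single out as ``the core'' is not correct as stated: the one-sided inequality $T(r,F)\le N_2(r,0;F)+N_2(r,\infty;F)+N_2(r,0;G)+N_2(r,\infty;G)+\left(\frac{3}{2}-s\right)\ol N_{*}(r,1;F,G)+S(r,F)+S(r,G)$ does not follow from the tools you invoke. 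Running the second main theorem on $F$ alone, splitting $\ol N(r,1;F)=N_E^{1)}(r,1;F\mid=1)+\ol N_E^{(2}(r,1;F)+\ol N_L(r,1;F)+\ol N_L(r,1;G)$, bounding $N_E^{1)}$ by $N(r,\infty;H)$, and then absorbing $\ol N_E^{(2}(r,1;F)+s\,\ol N_L(r,1;F)+(s+1)\ol N_L(r,1;G)+\ol N_0(r,0;G')\le N(r,0;G'\mid G\neq 0)\le \ol N(r,0;G)+\ol N(r,\infty;G)+S(r,G)$, what actually comes out is the \emph{asymmetric} bound $T(r,F)\le N_2(r,0;F)+N_2(r,\infty;F)+N_2(r,0;G)+N_2(r,\infty;G)+(2-s)\ol N_L(r,1;F)+(1-s)\ol N_L(r,1;G)+S$. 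Your symmetric-coefficient version would require $(2-s)\ol N_L(r,1;F)+(1-s)\ol N_L(r,1;G)\le\left(\frac{3}{2}-s\right)\ol N_*(r,1;F,G)$, i.e. $\ol N_L(r,1;F)\le \ol N_L(r,1;G)$, for which there is no justification; so as a one-sided statement your core inequality is an unproved strengthening of the lemma.

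The repair stays entirely within your plan: keep the asymmetric coefficients, write the mirror inequality for $T(r,G)$ (which carries $(2-s)\ol N_L(r,1;G)+(1-s)\ol N_L(r,1;F)$), and only then average; the coefficients symmetrize to $\frac{3}{2}-s$ and the translation back to $f$, $g$ proceeds exactly as you describe. Equivalently --- and this is how such lemmas are actually organized in \cite{Ban-Kundu_lmj} --- one adds the two second-main-theorem inequalities \emph{first} and uses the joint estimate $\ol N(r,1;F)+\ol N(r,1;G)\le \frac{1}{2}\big(N(r,1;F)+N(r,1;G)\big)+N_E^{1)}(r,1;F\mid=1)-\left(s-\frac{1}{2}\right)\ol N_*(r,1;F,G)$, valid for $s\ge 1$, absorbing $\frac{1}{2}\big(N(r,1;F)+N(r,1;G)\big)$ into $\frac{1}{2}\big(T(r,F)+T(r,G)\big)$ on the left. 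That absorption is where the $\frac{1}{2}$, and hence the constant $\frac{3}{2}-s$, genuinely originates: it is intrinsically a two-function bookkeeping, not a property of either function separately.
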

	\begin{lem}\label{l2.6}
		Let $F = -(f^{n} + af^{n-m})/b$ and $G = -(g^{n} + ag^{n-m})/b$, where $f$ and $g$ be any two non-constant meromorphic functions %of finite order, having finitely many poles in $\mathbb{C}$, 
		and	$n$, $m$ be relatively prime positive integers such that $n > m \geq 1$, and $a$, $b$ be non-zero finite constants. Let $\delta_{i},\;i=1,2,\ldots,m$ be the distinct roots of the equation $w^{m}+a = 0$. If $H \not\equiv 0$ and $F,G$ share $(1,s)$, then,  \beas\frac{n}{2}\big(T(r,f)+T(r,g)\big)&\leq& 2\big(\ol N(r,0;f)+\ol N(r,0;g)+\ol N(r,\infty;f)+\ol N(r,\infty;g)\big)+\sum_{i=1}^{m}\big(N_{2}(r,\delta_{i};f)\\&&+N_{2}(r,\delta_{i};g\big)+\left(\frac{3}{2}-s\right)\ol N_{*}(r,1;F,G)+S(r,f)+S(r,g).\eeas\end{lem}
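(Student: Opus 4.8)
The plan is to transfer everything to $f$ and $g$ through the rational substitutions, run the Second Main Theorem on $F$ and $G$, and let the weight $s$ control the unequally-multiple $1$-points. First I would record the factorisations $F=-\tfrac{1}{b}f^{n-m}(f^{m}+a)=-\tfrac{1}{b}f^{n-m}\prod_{i=1}^{m}(f-\delta_{i})$ and $F-1=-\tfrac{1}{b}\big(f^{n}+af^{n-m}+b\big)$, together with the corresponding formulas for $G$ and for $F'=-\tfrac{1}{b}f^{n-m-1}\big(nf^{m}+a(n-m)\big)f'$. Since $F$ is an irreducible rational function of $f$ of degree $n$, Lemma \ref{l2.3} gives $T(r,F)=nT(r,f)+S(r,f)$ and $T(r,G)=nT(r,g)+S(r,g)$, so the asserted left side $\tfrac{n}{2}\big(T(r,f)+T(r,g)\big)$ is nothing but $\tfrac12\big(T(r,F)+T(r,G)\big)+S$. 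The factorisation also reads off the three fibres of $F$: its zeros are the zeros of $f$ (of multiplicity a multiple of $n-m$) and the $\delta_i$-points of $f$, its poles are the poles of $f$ (of multiplicity a multiple of $n$), and its $1$-points are exactly the points at which $f$ lands in the zero set of $P_{1}$. Because the $\delta_i$ are \emph{simple} roots of $w^{m}+a$, a $\delta_i$-point of $f$ is a zero of $F$ of the same multiplicity, which is the reason the terms $N_{2}(r,\delta_i;f)$ --- and not merely their reduced versions --- will survive the truncation.

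The engine of the proof is the standard weighted-sharing form of the Second Main Theorem. Applying the Second Main Theorem to $F$ with targets $0,1,\infty$, estimating the common simple $1$-points by Lemma \ref{l2.1} through $N_{E}^{1)}(r,1;F)\le N(r,\infty;H)+S$, and bounding $N(r,\infty;H)$ by the multiple zeros and poles of $F$ and $G$, by $\ol N_*(r,1;F,G)$, and by the auxiliary ramification counts $\ol N_{0}(r,0;F')$, $\ol N_{0}(r,0;G')$ in the manner of the estimate in Lemma \ref{l2.2}, one obtains the one-sided inequality
\[T(r,F)\le N_{2}(r,0;F)+N_{2}(r,\infty;F)+N_{2}(r,0;G)+N_{2}(r,\infty;G)+\Big(\tfrac{3}{2}-s\Big)\ol N_*(r,1;F,G)+S(r,F)+S(r,G),\]
the truncated counting functions $N_{2}$ arising precisely from absorbing the $\ol N_{(2}$-contributions produced by the pole estimate of $H$ into the reduced counts $\ol N(r,0;F)$, $\ol N(r,\infty;F)$ coming out of the Second Main Theorem. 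The symmetric inequality holds for $T(r,G)$ with the roles of $F$ and $G$ interchanged.

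Finally I would translate the truncated counts back to $f$: since a zero of $f$ produces a zero of $F$ of multiplicity at least $n-m\ge1$ and a pole of $f$ produces a pole of $F$ of multiplicity $n\ge2$, while a $\delta_i$-point of $f$ produces a zero of $F$ of equal multiplicity, one has $N_{2}(r,0;F)\le 2\,\ol N(r,0;f)+\sum_{i=1}^{m}N_{2}(r,\delta_i;f)$ and $N_{2}(r,\infty;F)=2\,\ol N(r,\infty;f)$, with the analogous bounds for $G$. Adding the two one-sided inequalities, dividing by two, and inserting these estimates yields exactly the coefficient $2$ on $\ol N(r,0;f)+\ol N(r,0;g)+\ol N(r,\infty;f)+\ol N(r,\infty;g)$, the summands $N_{2}(r,\delta_i;f)+N_{2}(r,\delta_i;g)$, and the coefficient $\tfrac{3}{2}-s$ on $\ol N_*(r,1;F,G)$, which is the claim.

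The hard part will be establishing the sharp coefficient $\tfrac{3}{2}-s$ in the one-sided inequality of the second step. This is the delicate bookkeeping at the $1$-points: under $(1,s)$ sharing every point feeding $\ol N_*(r,1;F,G)$ has multiplicity at least $s+1$ in both $F$ and $G$, and one must weigh the single positive occurrence of $\ol N_*$ coming from the poles of $H$ against the multiplicity that such points discard in passing to the reduced counting functions of the Second Main Theorem, so that precisely $\tfrac{3}{2}-s$ remains. This is carried out in the same manner as in \emph{Lemma 7} of \cite{Ban-Kundu_lmj}, and the present substitutions $F=-(f^{n}+af^{n-m})/b$, $G=-(g^{n}+ag^{n-m})/b$ differ from that case only in the shape of the fibre over $0$.
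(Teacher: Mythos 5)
Your proposal follows the same route as the paper: the paper in fact omits the proof of this lemma altogether, deferring to Lemmas 6--7 of \cite{Ban-Kundu_lmj}, and your outline --- factorising $F$, $F-1$ and $F'$, invoking Lemma \ref{l2.3} to get $T(r,F)=nT(r,f)+S(r,f)$, running the Second Main Theorem together with Lemma \ref{l2.1} and the pole estimate for $H$, and then translating the truncated counting functions back to $f$ and $g$ --- is exactly that argument. Your translation step, which is the only part specific to this choice of $F$ and $G$ (zeros of $f$ give zeros of $F$ of multiplicity divisible by $n-m$, poles of $f$ give poles of $F$ of multiplicity divisible by $n\geq 2$, and the simple roots $\delta_i$ of $w^m+a$ give $N_2(r,0;F)\leq 2\ol N(r,0;f)+\sum_{i=1}^{m}N_2(r,\delta_i;f)$, $N_2(r,\infty;F)= 2\ol N(r,\infty;f)$), is correct.

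One caveat on the middle step: the inequality you assert is not obtainable as a genuinely \emph{one-sided} bound with the coefficient $\tfrac32-s$. Under $(1,s)$ sharing, a point counted in $\ol N_{*}(r,1;F,G)$ has multiplicity at least $s+1$ in \emph{both} $F$ and $G$, but at least $s+2$ only in \emph{one} of them; if you control the $1$-points of $F$ against $N(r,1;G)$ alone, the bookkeeping yields the coefficient $2-s$, not $\tfrac32-s$. The sharper coefficient arises only after adding the two Second Main Theorem inequalities, where one may use the averaged multiplicity $\tfrac{(s+1)+(s+2)}{2}=s+\tfrac32$, giving the symmetric bound
$$\tfrac12\big(T(r,F)+T(r,G)\big)\leq N_2(r,0;F)+N_2(r,0;G)+N_2(r,\infty;F)+N_2(r,\infty;G)+\big(\tfrac32-s\big)\ol N_{*}(r,1;F,G)+S(r,F)+S(r,G).$$
Since your final assembly adds the $F$- and $G$-inequalities and divides by two in any case, this misstatement does not affect the conclusion: you should simply carry out the $1$-point bookkeeping on the sum rather than claiming it for $T(r,F)$ alone, which is precisely what the cited Lemma 7 of \cite{Ban-Kundu_lmj} does.
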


	\section{proofs of the theorems}
	\begin{proof}[Proof of Theorem \ref{t1.1} and Theorem \ref{t1.2}]
		It is given that when $l=n-1$ then $f$, $\mathcal{L}$ share $(S,1)^*=\{(\alpha_{1},1),(\{\alpha_2,\ldots,\alpha_{n-1}\},2)\}$. Then clearly $P(f)=(f-\alpha_{1})^2(f-\alpha_{2})\ldots(f-\alpha_{n-1})$ and $P(\mathcal{L})=(\mathcal{L}-\alpha_1)^2(\mathcal{L}-\alpha_{2})\ldots(\mathcal{L}-\alpha_{n-1})$ share $(0,2)$ CM and similarly $F=-\frac{f^n}{af^m+b}$ and $G=-\frac{\mathcal{L}^n}{a\mathcal{L}^m+b}$ share $(1,2)$. Similarly when $f$, $\mathcal{L}$ share  $(S,0)^*=\{(\alpha_{1},0),(\{\alpha_2,\ldots,\alpha_{n-1}\},1)\}$. Then clearly $P(f)$, $P(\mathcal{L})$ share $(0,1)$, i.e., $F$, $G$ share $(1,1)$. \par  Now instead of {\it{Lemma 6}} in \cite{Ban-Kundu_lmj} using {\it{Lemma \ref{l2.6}}}, for $s=1$, $2$  and proceeding similarly as done in the proof of {\it{Theorem 1}}, {\it{Theorem 3}} in \cite{Ban-Kundu_lmj} for $l=n-1$, we can get the result.	
	\end{proof}
	\begin{proof}[Proof of Theorem \ref{t1.3}]  Let $l=n-1$, $f$ and $\mathcal{L}$ share $(S,\infty)^*=\{(\alpha_1,\infty),(\{\alpha_2,\ldots,\alpha_{n-1}\},\infty)\}$. It follows that $P(f)$ and $P(\mathcal{L})$ share $(0,\infty)$ and hence $F=-\frac{f^{n}}{af^{m}+b}$ and $G=-\frac{\mathcal{L}^{n}}{a\mathcal{L}^{m}+b}$ share $(1,\infty)$. 
		\par First we introduced the auxiliary function $$\frac{P(f)}{P(\mathcal{L})}=H_o.$$ Since $P(f)$ and $P(\mathcal{L})$ share $(0,\infty)$ and $f,\mathcal{L}$ have finitely many poles, we can find some rational $Q_o$ such that $$\frac{P(f)}{P(\mathcal{L})}=Q_o.e^{q},$$  where $q$ is a entire function with $deg(q)\leq 1$.\\ Again if, \beas P(f)&=&P(\mathcal{L})\\f^n-\mathcal{L}^n&=&-a(f^m-\mathcal{L}^m)\\\mathcal{L}^{n-m}&=&-\frac{a(h^m-1)}{h^n-1},\eeas where $h=\frac{f}{\mathcal{L}}$. Since $f$ and ${\mathcal{L}}$ share $(0,0)$, then from $P(f)=P(\mathcal{L})$ we get $f$, ${\mathcal{L}}$ share $(0,\infty)$ and ($\infty$,$\infty$). Clearly then $h$ has no zero no poles and since ${\mathcal{L}}$ can have at most one pole then $h$ must have $n-2\;(n\geq 3)$ exceptional value, a contradiction. Hence one must have $h\equiv 1\implies f\equiv \mathcal{L}$. \par If $P(f)\not=P(\mathcal{L})$ then we have $\ol N(r,0;f)=\ol N(r,0;\mathcal{L})\leq \ol N(r,1;Q_o.e^q)\leq O(r)$. Now proceeding same as in l.8, p.13 in \cite{Kundu-Ban_palermo} we will get a contradiction and the remaining part of the proof is same as {\it{Theorem 1.1}} in \cite{Kundu-Ban_palermo}. So we omit the detail.
	\end{proof}
	\begin{note}
		When $l=n$, in {\it{Theorem 1.1}} \cite{Kundu-Ban_palermo},  $\frac{\hat{H}}{Q}=1\implies P(f)=P(\mathcal{L})$, then proceeding similarly as in above we will get $f\equiv \mathcal{L}$.
	\end{note}
	\begin{proof}[Proof of Theorem \ref{t1.4}]
		First let us consider $$\hat{F}=\frac{(n-1)(n-2)f^{n-2}(f^2-\frac{2n}{n-1}f+\frac{n}{n-2})}{2}\;,\;\;\hat{G}=\frac{(n-1)(n-2)\mathcal{L}^{n-2}(\mathcal{L}^2-\frac{2n}{n-1}\mathcal{L}+\frac{n}{n-2})}{2},$$ and consider the same function $H$ for $F=\hat{F}$, $G=\hat{G}$. 
		\\{\bf\underline{Case I.}} It is given that $f$ and $\mathcal{L}$ share $S=\{z:P_{FR}^1(z)=0\}$ with weight $1$ in a restricted manner such that $(S,1)^*=\{(1,1),(S_Q,3)\}$. Here we have $\hat{F}-1=(f-1)^3Q_{n-3}(f)$, $\hat{G}-1=(\mathcal{L}-1)^3Q_{n-3}(\mathcal{L})$. As from the definition we have $f$ and $\mathcal{L}$ share $(1,1)$, if $z_0$ is a common zero of $f-1$ and $\mathcal{L}-1$ of multiplicity $t(\geq 1)$, then the contribution of those $1$ points in $\hat{F}-1$ and $\hat{G}-1$ is of multiplicity $\geq 3t$ and so the difference in multiplicities can occur only when $t\geq 2$.  Clearly $\hat{F}$, $\hat{G}$ share $(1,3)$. And here $\ol N(r,\infty;f)=\ol N(r,\infty;\mathcal{L})=O(\log r)$.
		\par Again, as $\hat{F}$ and $\hat{G}$ share $(1,3)$ then $ N_{E}^{1)}(r,1;\hat{F}\mid=1)=\ol N(r,1;\hat{F}\mid=1)=\ol N(r,1;\hat{G}\mid=1)$. Also from {\it{Lemma \ref{l2.4}}}, we know that $f$, $\mathcal{L}$ are of finite order and so $S(r,f)=S(r,\mathcal{L})=O(\log r)$. \par Now note that, 
		\beas&& \ol N(r,0;\hat{G}'\mid \hat{G}=1)+\ol N_1(r,0;\mathcal{L}')\leq \ol N(r,1;\mathcal{L})+\ol N(r,0;\mathcal{L}'\mid \mathcal{L}\not=0)\\&\leq& \ol N(r,1;\mathcal{L})+\ol N(r,0;\mathcal{L})+\ol N(r,\infty;\mathcal{L})\leq \ol N(r,1;\mathcal{L})+\ol N(r,0;\mathcal{L})+O(\log r).\eeas Similarly we can get, \beas N(r,0;\hat{F}'\mid \geq 3;\hat{F}=1) \leq \frac{1}{3}\left(N(r,1;f)+\ol N(r,0;f)\right)+O(\log r).\eeas
		
		Using the above relations and {\it{Lemma \ref{l2.1}}}, the second part of {\it{Lemma \ref{l2.2}}}
		and the Second Fundamental Theorem we have \bea\label{e3.1}&& (n-2)T(r,f)\\&\leq& \ol N(r,0;f)+\ol N(r,\infty;f)+\ol N(r,1;\hat{F})- N_1(r,0;f')\nonumber\\\nonumber&\leq&	\ol N(r,0;f)+\ol N(r,\infty;f)+\ol N(r,1;\hat{F}\mid=1)+\ol N(r,1;\hat{F}\mid\geq 2)-N_1(r,0;f')\\\nonumber&\leq& 2\ol N(r,0;f)+\ol N(r,0;\mathcal{L})+\ol N_*(r,1;\hat{F},\hat{G})+\ol N(r,1;\hat{G}\mid\geq 2)+\ol N_1(r,0;\mathcal{L}')\\\nonumber&&+O(\log r)\\\nonumber&\leq&2\ol N(r,0;f)+\ol N(r,0;\mathcal{L})+\ol N(r,1;\hat{F}\mid\geq 4)+\ol N(r,1;\hat{G}\mid\geq 2)+\ol N_1(r,0;\mathcal{L}')+O(\log r)\\\nonumber&\leq& 2\ol N(r,0;f)+\ol N(r,0;\mathcal{L})+\ol N(r,0;\hat{F}'\mid\geq 3;\hat{F}=1)+\ol N(r,0;\hat{G}'\mid \hat{G}=1) +\ol N_1(r,0;\mathcal{L}')\\\nonumber&&+O(\log r)
		\\\nonumber&\leq&2\ol N(r,0;f)+2\ol N(r,0;\mathcal{L})+\ol N(r,1;\mathcal{L})+\frac{1}{3}(\ol N(r,0;f)+N(r,1;f))+O(\log r).\eea Proceeding similarly we can get \bea\label{e3.2}\;\;\;(n-2)T(r,\mathcal{L})&\leq& 2\ol N(r,0;f)+2\ol N(r,0;\mathcal{L})+\ol N(r,1;f)+\frac{1}{3}(\ol N(r,0;\mathcal{L})+N(r,1;\mathcal{L}))\\\nonumber&&+O(\log r).\eea Hence adding (\ref{e3.1}) and (\ref{e3.2})  we get \beas (n-2)\{T(r,f)+T(r,\mathcal{L})\}\leq \left(5+\frac{2}{3}\right)\{T(r,f)+T(r,\mathcal{L})\}+O(\log r).\eeas Clearly for $n\geq 8$ we get a contradiction and hence $H\equiv 0$. \par Then from (\ref{e2.2}) we get \bea \label{e3.3} \frac{1}{\hat{F}-1}\equiv \frac{A}{\hat{G}-1}+B,\eea  where $A(\not=0)$ and $B$ are two constants. So in view of {\it{Lemma \ref{l2.3}}}, from (\ref{e3.3}) we get
		\be\label{e3.4} T(r,f)=T(r,\mathcal {L})+S(r,f).\ee
		\par Suppose $B\not=0$. Then from (\ref{e3.3}) we get \be\label{e3.5} \hat{F}-1\equiv \frac{\hat{G}-1}{B\hat{G}+A-B}.\ee 
		\par
		If $A-B\not =0$, then noting that $\frac{B-A}{B}\not =1$, from (\ref{e3.1}) we get 
		$$\ol N\left (r,\frac{B-A}{B};\hat{G}\right)=\ol N(r,\infty;\hat{F}).$$
		\par Therefore in view of {\it{Lemma \ref{l2.3}}} and equation (\ref{e3.3}) using the Second Fundamental Theorem  we have 
		\beas T(r,\hat{G})=nT(r,\mathcal{L})&\leq&\ol N(r,0;\hat{G})+\ol N(r,\infty;\hat{G})+\ol N\left (r,\frac{B-A}{B};\hat{G}\right)+S(r,\hat{G})\\&\leq&3T(r,\mathcal{L})+\ol N(r,\infty;\mathcal{L})+\ol N(r,\infty;f)+O(\log r),\eeas
		which is a contradiction for $n\geq 8$.
		\par If $A-B=0$, then from (\ref{e3.3}) we have \be\label{e3.6}\frac{\hat{G}-1}{\hat{F}-1}  \equiv B\hat{G}=B\frac{(n-1)(n-2)\mathcal{L}^{n-2}(\mathcal{L}^2-\frac{2n}{n-1}\mathcal{L}+\frac{n}{n-2})}{2}.\ee  Clearly (\ref{e3.6}) implies that zeros of $\mathcal{L}$ and $(\mathcal{L}^2-\frac{2n}{n-1}\mathcal{L}+\frac{n}{n-2})$ are poles of $\hat{F}$. Now clearly the zeros of $(z^2-\frac{2n}{n-1}z+\frac{n}{n-2})$ are simple. Let $\xi_i\;(i=1,2)$ be a zero of $(z^2-\frac{2n}{n-1}z+\frac{n}{n-2})$. Suppose $z_0$ be a zero of  $\mathcal{L}-\xi_i$ of multiplicity $p$ then from (\ref{e3.6}) we know that it is a pole of $f$ of multiplicity $q$ $\geq1$ such that $p=nq$. i.e., $p\geq n$. Similarly any zero of $\mathcal{L}$ of multiplicity $r$ is a pole of $f$ of multiplicity $s(\geq 1)$. i.e., $r(n-2)=sn$. It follows that, $r=\frac{sn}{n-2}> 1$. Now using the Second Fundamental Theorem we get 
		\beas 2T(r,\mathcal{L})&\leq& \ol N(r,0;\mathcal{L})+\ol N(r,\infty;\mathcal{L})+\sum_{i=1}^{2}\ol N(r,\xi_i;\mathcal{L})\\&\leq&\left (\frac{1}{2}+\frac{2}{n}\right)T(r,\mathcal{L})+O(\log r),\eeas a contradiction for $n>7$. \par Hence $B=0$. Then from (\ref{e3.3}) we get that $$\hat{G}-1=A(\hat{F}-1).$$ i.e., \be\label{e3.7}\hat{G}=A\left (\hat{F}-1+\frac{1}{A}\right).\ee \par If $A\not=1$ then $(\hat{F}-1+\frac{1}{A})$ can be written as $\left(\hat{F}-1+\frac{1}{A}\right)=\prod_{i=1}^{n}(f-\mu_i)$, then using the Second Fundamental Theorem we have \beas (n-1)T(r,f)&\leq& \sum_{i=1}^{n}\ol N(r,\mu_i;f)+\ol N(r,\infty;f)\leq 3T(r,\mathcal{L})+O(\log r),\eeas which in view of $n>7$ and (\ref{e3.4}) we get a contradiction again. \par Therefore $A=1$, and hence $$\hat{F}=\hat{G}.$$ i.e.,  \bea \label{e3.8}  &&\frac{(n-1)(n-2)}{2}(f^n-\mathcal{L}^n) -n(n-2)(f^{n-1}-\mathcal{L}^{n-1})+\frac{(n-1)n}{2}(f^{n-2}-\mathcal{L}^{n-2})=0.\eea Now proceeding in the same way as done in the last part of main {\it{Theorem }} in \cite{Frank-Reinders_comp.var} we can get $f\equiv \mathcal{L}$.
		\\{\bf\underline{Case II.}} Suppose that $f$ and $\mathcal{L}$ share $S=\{z:P_{FR}^1(z)=0\}$ with weight $0$ in a restricted manner such that $(S,0)^*=\{(1,0),(S_Q,2)\}$. Then as explained in Case I, we have $\hat{F}$, $\hat{G}$ share $(1,2)$.
		Now using {\it{Lemma \ref{l2.1}, \ref{l2.2}}}	and the Second Fundamental Theorem we have \bea\label{e3.9}&& (n-2)T(r,f)\\&\leq& \ol N(r,0;f)+\ol N(r,\infty;f)+\ol N(r,1;\hat{F})- N_1(r,0;f')\nonumber\\\nonumber&\leq&	\ol N(r,0;f)+\ol N(r,\infty;f)+\ol N(r,1;\hat{F}\mid=1)+\ol N(r,1;\hat{F}\mid\geq 2)- N_1(r,0;f')\\\nonumber&\leq& 2\ol N(r,0;f)+\ol N(r,0;\mathcal{L})+\ol N_*(r,1;\hat{F},\hat{G})+\ol N(r,1;\hat{G}\mid\geq 2)+\ol N_1(r,0;\mathcal{L}')\\\nonumber&&+O(\log r)\\\nonumber&\leq&2\ol N(r,0;f)+\ol N(r,0;\mathcal{L})+\ol N(r,1;\hat{F}\mid\geq 3)+\ol N(r,1;\hat{G}\mid\geq 2)+\ol N_1(r,0;\mathcal{L}')+O(\log r)\\\nonumber&\leq&2\ol N(r,0;f)+2\ol N(r,0;\mathcal{L})+\ol N(r,1;\mathcal{L})+\frac{1}{2}(\ol N(r,0;f)+N(r,1;f))+O(\log r).\eea Proceeding similarly we can get, \bea\label{e3.10}\;\;\;\;\;(n-2)T(r,\mathcal{L})&\leq& 2\ol N(r,0;f)+2\ol N(r,0;\mathcal{L})+\ol N(r,1;f)+\frac{1}{2}(\ol N(r,0;\mathcal{L})+N(r,1;\mathcal{L}))\\\nonumber&&+O(\log r).\eea Hence from (\ref{e3.9}) and (\ref{e3.10}) we get \beas (n-2)T(r)\leq 6T(r),\eeas where $T(r)=T(r,f)+ T(r,\mathcal{L})$. Clearly for $n\geq 9$ we get a contradiction.
		\par Now adopting the same procedure as used in (\ref{e3.3})-(\ref{e3.8}), and thereafter in Case I. we can get $f\equiv \mathcal{L}$.	
	\end{proof}
	
	\begin{note}
		Consider the set $S=\{z:P_{FR}(z)=0\}$ and assume $f$, $\mathcal{L}$ share $S$ CM. Now proceeding in the same way as done in (\ref{e3.1})-(\ref{e3.7}) and using first part of  {\it{Lemma \ref{l2.2}}},  for $n\geq 7$, one can get $f\equiv \mathcal{L}$. Here we note that the set contains at least $7$ elements.
	\end{note}
	%\begin{note}We know	$S=\{z:P_{FR}(z)=0\}$ is the URSM with least cardinality $11$ (see \cite{Frank-Reinders_comp.var}). Till the date no other URSM has been found with cardinality $<11$. Also to generate a URSM  $P_{FR}(z)$ is always considered with $c\not=1$. So naturally it is quite ineresting to investigate a the uniqueness relation between two arbitrary meromorphic functions $f$ and $g$ sharing the set $S=\{z:P^1_{FR}(z)=0\}$.\par If two non-constant meromorphic functions $f$ and $g$ share the set $S=\{z:P^1_{FR}(z)=0\}$ restricted CM s.t $(S,\infty)^*=\{(1,\infty),(S_Q,\infty)\}$ then proceeding  same as done in {\it{Theorem \ref{t1.2}}} for $n\geq 12$ we can get $f\equiv g$ where the set $S=\{z:P^1_{FR}(z)=0\}$ contain $n-2$ elements ($\geq 10$).\end{note}


\begin{thebibliography}{90}
		\bibitem{Ban-Mallick_cmft} A. Banerjee and S. Mallick, On the charecteristations of a new class of  strong uniqueness polynomials generating unique range sets,  Comput. Methods Funct. Theo., 17(2017), 19-45.
		\bibitem{Ban-Kundu_lmj}A. Banerjee and A. Kundu, Weighted sharing and uniqueness of $\boldmath{L}$-Function with certain class of meromorphic function, Lithuanian Math. J., 2020. (published online) (doi.org/10.1007/s10986-021-09512-1).
		\bibitem{Frank-Reinders_comp.var} G. Frank and M. Reinders, A unique range set for meromorphic functions with 11 elements. Complex Var. Theo. Appl., 37(1)(1998), 185-193.
		\bibitem{G.F_Springer(1977)} F. Gross, Factorization of meromorphic functions and some open problems Complex Analysis (Proc. Conf. Univ. Kentucky, Lexington, Kentucky, 1976), Lecture Notes in Math., 599, Springer-Berlin(1977), 51-69.
		\bibitem{Graun-Grahl-Steuding} R. Garunkstis, J. Grahl and J. Steuding, Uniqueness theorems for $L$-functions, Comment. Math. Univ. St. Pauli,	60(2011), 15-35.
		\bibitem{W.K.Hayman_64} W. K. Hayman, Meromorphic functions, Oxford Mathematical Monographs, Clarendon Press, Oxford (1964). 
		\bibitem{Hu_LI_Can-16} P. C. Hu and B. Q. Li, A simple proof and strengthening of a uniqueness theorem for $L$-functions, Can. Math. Bull., 59(2016), 119-122.
		\bibitem{Kundu-Ban_palermo}A. Kundu and A. Banerjee, Uniqueness of $\boldmath{L}$ function with special class of meromorphic function in the light of two shared sets, Rend. del. Cir. Mat. Palermo (doi.org/10.1007/s12215-020-00551-0), 2020.
		\bibitem{Lahiri_Nagoya(01)} I. Lahiri, Weighted sharing and uniqueness of meromorphic functions, Nagoya Math. J., 161(2001), 193-206.
		\bibitem{Lahiri_comp.var} I. Lahiri, Weighted value sharing and uniqueness of meromorphic functions, Complex Var. Theo. Appl., 46(2001), 241-253.
		\bibitem{P.Li-C.C.Yang_kodai(95)} P. Li and C. C. Yang, Some further results on unique range sets of meromorphic function, Kodai. Math. J., 18(1995), 437-450.
		\bibitem{B.Q.LI-Proc.Am-10} B. Q. Li, A result on value distribution of $L$-functions, Proc. Am. Math. Soc., 138(6)(2010), 2071-2077.
		\bibitem{Li-Yi_Nachr} X. M. Li and H. X. Yi, Results on value distribution of $L$-functions, Math. Nachr., 286(2013), 1326-1336.
		\bibitem{Lin-Lin_filomat}P. Lin and W. Lin,  Value distribution of $L$-functions concerning sharing sets, Filomat., 30(2016), 3795-3806.
		\bibitem{Mokhon'ko-lem}A. Z. Mokhon'ko, On the Nevanlinna characteristics of some meromorphic functions, Theory of functions. Functional analysis and their applications., 14(1971), 83-87.
		\bibitem{Sahoo-Hal_20_Lith} P. Sahoo and S. Haldar, Results on $L$ functions and certain uniqueness question of Gross, Lithuanian Math. J., 60(1)(2020), 80-91.
		\bibitem{Sahoo-Sarkar_al_i.cuza}P. Sahoo and A. Sarkar, On the uniqueness of L-function and meromorphic function
		sharing one or two sets, An. Stiint. Univ. Al. I. Cuza Iasi. Mat., 66(1)(2020), 81-92.
		\bibitem{Steuding-Sprin-07} J. Steuding, Value Distribution of $L$-Functions, Lect. Notes Math., Vol. 1877, Springer, Berlin(2007).
		
		\bibitem{selberg-92} A. Selberg, Old and new conjectures and results about a class of Dirichlet series, in: Proccedings of the Amalfi	Conference on Analytic Number Theory (Maiori, 1989), Univ. Salerno, Salerno (1992), 367-385.
		
		
		\bibitem{Yan-Li-Yi_Lith} Q. Q.  Yuan and X. M.  Li, and H. X. Yi, Value distribution of $L$-functions and uniqueness questions of F. Gross, Lithuanian Math. J., 58(2)(2018), 249-262.
	\bibitem{Yi-Lu} H. X. Yi and W. R. Lu, Meromorphic functions that share two sets, II, Acta Math. Sci. Ser. B (Engl. Ed.), 24(1)(2004), 83–90.
		\bibitem{Yi_comp.var(97)}H. X. Yi, The reduced unique range sets for entire or
		meromorphic functions, Complex Variables, 32(1997),
		191-198.
		
			
	\end{thebibliography}
\end{document}